\documentclass[10pt,letterpaper]{article}
\usepackage[utf8]{inputenc}
\usepackage{fullpage}
\usepackage{mathtools}
\usepackage{amsmath}
\usepackage{amssymb}
\usepackage{amsthm}
\usepackage{graphicx} %package to manage images
\usepackage{hyperref}
\usepackage{authblk}
\usepackage{comment}
\usepackage{subcaption}
\usepackage{xcolor}
\usepackage{nccmath}
\usepackage[parfill]{parskip}
\usepackage{multicol}
\usepackage{multirow}

\DeclareUnicodeCharacter{FFFD}{ }

%personal package
\usepackage{hoang}

\title{Stochastic Approximation for Nonlinear Discrete Stochastic Control: Finite-Sample Bounds}
%\title{Finite-Time Analysis for nonlinear stochastic control methods of smooth and non-smooth systems}
\author{Hoang Huy Nguyen\thanks{Georgia Institute of Technology, \url{hnguyen455@gatech.edu}}, Siva Theja Maguluri\thanks{Georgia Institute of Technology, \url{siva.theja@gatech.edu}}}
%\affil{Industrial and Systems Engineering, Georgia Institute of Technology}
\date{}

\begin{document}

\maketitle

\begin{abstract}
We consider a nonlinear discrete stochastic control system, and our goal is to design a feedback control policy in order to lead the system to a prespecified state. We adopt a stochastic approximation viewpoint of this problem. It is known that by solving the corresponding continuous-time deterministic system, and using the resulting feedback control policy, one ensures almost sure convergence to the prespecified state in the discrete system. In this paper, we adopt such a control mechanism and provide its finite-sample convergence bounds whenever a Lyapunov function is known for the continuous system. In particular, we consider four cases based on whether the Lyapunov function for the continuous system gives exponential or sub-exponential rates and based on whether it is smooth or not. We provide the finite-time bounds in all cases. Our proof relies on constructing a Lyapunov function for the discrete system based on the given Lyapunov function for the continuous system. We do this by appropriately smoothing the given function using the Moreau envelope. We present numerical experiments corresponding to the various cases, which validate the rates we establish. 
%    We study a feedback control problem where given a desired point, we wish to find a stable control to reach the said equilibrium point. Using a root-finding framework called Stochastic Approximation (SA) which is commonly applied in noise-corrupted systems, we derive a method to find stabilizing control for discrete stochastic nonlinear systems with exponential and sub-exponential stability conditions. In addition, existing results on feedback control are predominantly asymptotic convergence while little is known about its finite-time behavior. Thus, this motivates us to obtain a finite-time convergence bound and to the best of our knowledge, we are the first to achieve such a bound using SA. In order to do so, we construct a smooth Lyapunov function using the Moreau envelope including an adaptive envelope to address the non-smoothness of the system. Extensive experiments in various settings and various choices of diminishing step sizes validate our theories and demonstrate the favorable performance of our methods in practice.
\end{abstract}

\section{Introduction}
In this paper, we consider the problem of controlling a nonlinear discrete stochastic system of the form,
\begin{align}
    \label{eqn: control-framework-discrete}
    x_{k+1} = x_k + \alpha_k(F(x_k, u_k) + w_k)
\end{align}
where  $x_k \in \R^d$ is the state vector, $u_k \in \R^m$ is the control, $w_k$ is the noise, $F(\cdot,\cdot)$ is in general a nonlinear mapping, and $\alpha_k$ is a sequence of step-sizes. The goal is to pick the control sequence $u_k$ in order to ensure that the system reaches a pre-specified state $x^*$. We will focus on feed-back control strategies of the form $u_k = \rho(x_k)$ for some mapping $\rho:\R^d \rightarrow \R^m$ to reach the state $x^*$. In this paper, we provide finite-time convergence bounds on the error, $\norm{x_k-x^*}$ depending on the choice of the step-size sequence $\alpha_k$.

Such a problem arises in several control settings such as adaptive regulation \cite{HanFuChenSAApplication}, \cite{hanfuchen-adaptive-regulator} with widespread applications such as disturbance rejection in spacecraft systems \cite{spacecraft-adaptive-regulation} and frequency regulation in power systems \cite{adaptive-regulation-power-system}; or selector control with an application in air-fuel control \cite{PID-controllers-book}. 
A special case of our framework \eqref{eqn: control-framework-discrete} is the unforced setting without a $u_k$, and in this case, one is interested in the convergence rate to the equilibrium $x^*$. A large class of stochastic optimization \cite{beck-book} and reinforcement learning \cite{zaiwei-envelope} algorithms fit in this framework, and our results immediately apply to this case.

Stochastic recursions of the form \eqref{eqn: control-framework-discrete} are studied under the name of Stochastic Approximation \cite{Borkar2008StochasticAA}, and were first introduced by Robins and Monro \cite{Robbins&Monro:1951}. Asymptotic behavior of such recursions is well understood \cite{10.5555/2408520, Kushner-Clark-SA-book, Borkar2008StochasticAA}
in terms of the behavior of the corresponding continuous-time deterministic control system, 
\begin{align}
    \label{eqn: control-framework-continuous}
    \dot{x} = F(x,u) \,\, \forall x \in \mathbb{R}^d
\end{align}
In particular, it is known that \cite{Borkar2008StochasticAA} under appropriate assumptions on the system, noise and choice of step-sizes, the almost-sure asymptotic behavior of the discrete-stochastic system \eqref{eqn: control-framework-discrete} is identical to that of the continuous-deterministic system \eqref{eqn: control-framework-continuous}. Then, in order to lead the system to state $x^*$, one would find the optimal feedback solution $u = \rho(x(t))$ for the continuous system \eqref{eqn: control-framework-continuous} and use the same solution for the discrete system \eqref{eqn: control-framework-discrete}. The objective of this paper is to characterize the finite-time convergence error in such an approach. 

Naturally, the convergence rate of the discrete system depends on that of the continuous system. The convergence behavior as well as the rate of convergence of the continuous system is usually studied using Lyapunov arguments. In this paper, we characterize the convergence rate of the discrete system \eqref{eqn: control-framework-discrete} based on the properties of the Lyapunov function of the continuous system \eqref{eqn: control-framework-continuous}. Suppose that there exists a feedback control policy and the corresponding Lyapunov function $V$ for the continuous system that satisfies 
\begin{align}
\label{eqn: stability-assumption-continuous-time}
\frac{dV}{dt} = \langle \nabla V(x), F(x) \rangle \leq -\gamma V(x)^c \,\, \forall x \in \mathbb{R}^d \text{ for some }\gamma > 0, c \geq 1.
\end{align}
The convergence rate of the discrete system was established in the literature (in the context of optimization \cite{beck-book} and reinforcement learning \cite{nonlinear-sa}) in the special case when $c=1$ (also known as the global exponential stability \cite{expo-stabilizing-control-sreenath}) and $V(\cdot)$ is smooth (i.e., has Lipschitz gradients).
%Such results are known in special cases of 
%when the continuous system has global exponential convergence and the corresponding Lyapunov function is smooth (i.e., has Lipschitz gradients). 
In contrast, in this paper, we consider the case when the continuous system can either have exponential or sub-exponential rates ($c \geq 1$), while also allowing for non-smooth Lyapunov functions. 
%In particular, we consider four cases, that correspond to whether the continuous system has global exponential convergence or not, and whether the corresponding Lyapunov function is smooth  or not.
The main contributions of the paper are as follows.

% In the ODE form of \eqref{eqn: control-framework-discrete}, the asymptotic convergence and the almost sure convergence are generally known \cite{10.5555/2408520}, \cite{Kushner-Clark-SA-book}, \cite{Borkar2008StochasticAA}. We also have a high concentration bound by \cite{mazumdar-high-confidence} on the trajectories of the system which was modeled by a stochastic differential equation. In practice, however, it is generally preferable to have a finite-time guarantee given the finite computational budget of most applications. Furthermore, the state of the system is usually measured in discrete time. This gives rise to the question: Suppose that we can choose the step size $\alpha_k$ and the control $u_k$, can we obtain a finite-time for \eqref{eqn: control-framework-discrete} even in nonsmooth settings?\\

%To that end, we develop a Stochastic Approximation (SA) framework that was pioneered by \cite{Robbins&Monro:1951} and obtain a finite-time convergence bound of a potentially non-smooth system. Our contributions will be summarized in the next subsection.

\subsection{Main contributions}
\textbf{Results}: We establish finite-time bounds and the following sample complexity (that is, the number of operator sampling required) results for the SA algorithm for the control problem in \eqref{eqn: control-framework-discrete} based on the properties of the Lyapunov function \eqref{eqn: stability-assumption-continuous-time} for the corresponding continuous-time system \eqref{eqn: control-framework-continuous} under appropriate assumptions on noise and growth of the Lyapunov function in all four cases. 
\begin{enumerate}
\item For completeness, we first present the case of exponential stability ($c=1$) and smooth Lyapunov function $V(\cdot)$ which follows from \cite{nonlinear-sa}. We show that one needs $O\br{\frac{1}{\varepsilon}}$ samples to ensure that the mean square error $\E[\norm{x_k-x^*}^2] \leq \varepsilon$. %Then, we consider the other three cases. We show that when 
\item In the exponential case ($c=1$) with a non-smooth Lyapunov function, we again establish the same sample complexity.
\item For the sub-exponential case ($c>1$) with a smooth Lyapunov function, we show a sample complexity of $O\left( \frac{1}{\varepsilon^{2c-1}} \right)$.
\item Finally, in the non-smooth sub-exponential case, we establish a sample complexity of $O \left( \frac{1}{\varepsilon^{3d_{a,c}-1}} \right)$ where $a$ is the polynomial growth rate of the Lyapunov function and $d_{a,c} = a(c-1)/2+1$. We believe that this is not optimal, and obtaining tighter results is an open problem. 
\end{enumerate}
Our work is a generalization of \cite{nonlinear-sa} and an extension of \cite{hoang-nonlinear-sa-cdc} where these works establish convergence results for the smooth and non-smooth exponential cases whereas we extend the analysis to the smooth and non-smooth subexponential cases. We believe the analysis of subexponential settings may be of independent interest, such as subgeometric Markov chain mixing \cite{polynomial-convergence-markov-chain, Butkovsky_2014_subgeometric_wasserstein, durmus2015subgeometricratesconvergencewasserstein, qu-glynn2023wasserstein-contractive-drift}.

\textbf{Methodology}: 
Our key idea is to use the Lyapunov function from the continuous system to study the convergence rate of the discrete system. This works in the case when the Lyapunov function is smooth, even though the rate one obtains in the discrete system is worse than that of the continuous system due to the presence of noise. However, this approach does not work when the Lyapunov function is not smooth. In other words, in the absence of smoothness, one cannot get a handle on the errors between the continuous and discrete systems. So, in the exponential nonsmooth case, we construct a new Lyapunov function obtained by smoothing out the Lyapunov function of the continuous system. We do this by applying the infimal convolution \cite{beck-smoothing} with respect to the $\ell_2$ norm square function. This is called the Moreau envelope \cite{Moreau1965ProximitED, zaiwei-envelope}.
Finally, in the nonsmooth subexponential case, naive smoothing does not yield convergence, and so, we use a time-varying smoothness parameter to obtain a time-varying Moreau envelope. The use of time-varying Moreau envelopes may be of independent interest.

\textbf{Numerical Experiments}: In addition to the theoretical analysis, we also empirically validate our results in the latter three cases using three examples: Selector control problem \cite{Johansson2003PiecewiseLC} (non-smooth exponentially stable system), Artstein's circle example (non-smooth sub-exponentially stable system) \cite{nonsmooth-lyapunov-caratheodory}, and a synthetic nonlinear system example from \cite{khalil-book} for smooth sub-exponentially stable systems.

\subsection{Relevant literature}
\label{sec:literature}
Finding the equilibrium point of stabilizing control problems is essentially a root-finding problem. From this perspective, the equilibrium point finding problems can be solved through the framework of Stochastic Approximation (SA) algorithms, which were first proposed by \cite{Robbins&Monro:1951}. The asymptotic convergence of SA methods was analyzed using its associated ordinary differential equations (ODE) \cite{10.5555/2408520, Kushner-Clark-SA-book}. More specifically, it was shown in \cite{Borkar2008StochasticAA, Ljung-SA} that under some conditions, the SA algorithm converges almost surely as long as the corresponding ODE is stable. The asymptotic convergence of SA algorithms with Markovian noise has been studied extensively \cite{Borkar2008StochasticAA, neurodynamic}. The convergence of other SA variants such as SA with Markovian noise and multiple time-scale SA was
previously explored in \cite{karmakar-markovian-sa, sa-controlled-markov-noise, thinh-nonlinear-two-time-sa, bhatnagar_borkar_multiscale_1997, bhatnagar_borkar_twoscale_1998}. In contrast to asymptotic convergence guarantees in the literature, here we focus on obtaining finite-time bounds which enable us to provide sample complexity bounds. 
%have finite-sample analyses to provide performance guarantees on the output of SA algorithms after  executing a finite number of operations.\\

In order to analyze the stability of dynamical systems, it is common to employ control Lyapunov functions. In particular, one shows that the time derivative of the Lyapunov function is upper bounded by some negative definite function to show asymptotic stability (or some negative constant times the Lyapunov function itself to achieve exponential stability) \cite{khalil-book}. In this work, we seek to obtain finite-time analyses for nonlinear systems with a Lyapunov descent condition or some decay conditions. In the context of subgradient optimization, \cite{davis-subgradient-tame} showed that descent properties hold for any function with a Whitney stratifiable graph, covering a wide range of functions in optimization, control and 
Machine Learning. Indeed, such conditions are found in many settings and applications in control \cite{expo-stabilizing-control-sreenath, osinenko-stabilization, exponential-cbf-high-safety}, electrical systems \cite{decentralized-safe-rl-voltage-control}, robotics \cite{lyapunov-robotic-rl}, diffusion processes \cite{wenlong-diffusion}, queuing theory \cite{hoang-erlang-c-mixing}, and reinforcement learning \cite{rl-safety-clf-cbf, rl-primary-frequency-control, sajad-federated-rl}. 
In optimization, the Polyak-Lojasiewicz condition \cite{POLYAK1963864} establishes that the $\ell_2$ norm serves as a Lyapunov function with exponential stability. 
%Under gradient-based perspectives, the exponential stability condition is equivalent to the Polyak-Lojasiewicz condition \cite{POLYAK1963864} when we take the gradient flow of the iterates. 
Previously, \cite{nonlinear-sa} established finite-time analysis for nonlinear SA using an exponential dissipativity assumption, which is equivalent to exponential stability condition for the $\ell_2$ norm Lyapunov function. However, due to its reliance on the dissipativity assumption, its results are limited to the case of $\ell_2$ norm Lyapunov function and exponential stability. In this paper, we generalize the results to the case of subexponential nonlinear nonsmooth Lyapunov functions. %and cannot be used to obtain finite-time bounds for general non-smooth nonlinear systems with a more generic potentially non-smooth Lyapunov function.

In practice, many control Lyapunov functions are non-smooth \cite{osinenko-stabilization} since the control inputs are usually measured in discrete time rather than having continuous measurements, such as logical systems \cite{boolean-networks-stability} or approximate discrete-time models \cite{8606612}. In addition, discontinuous stabilizing feedback and non-smooth Lyapunov functions are deeply connected \cite{Clarke-discontinous-feedback} as the non-existence of a smooth control Lyapunov function implies the absence of a continuous stabilizing feedback law \cite{osinenko-stabilization}. The theory of Lyapunov stability for non-smooth systems was developed by \cite{nonsmooth-lyapunov}, where the Clarke generalized gradient \cite{clarke1983oan} was used to complement the lack of gradient as we usually have in the smooth setting. The non-smooth Lyapunov analysis of equilibria is present in the differential inclusions literature \cite{clarke1983oan, aubin1984differential, frankowska-lower-semicontinuous}, with applications in robotics \cite{paden-calculus-filippov} and non-smooth Stochastic Approximation \cite{nonsmooth-sa-di}. The work \cite{khalil-book} uses the ODE approach to handle the time derivative condition. A special application of non-smooth SA is the switching SA algorithm which is used in networked systems \cite{switching-sa}. However, these prior works did not have a finite-time guarantee for the non-smooth stochastic systems.

In order to deal with the non-smoothness of the system, one can attempt smoothing methods to yield a smoothened Lyapunov function. There are several approaches to perform smoothing. \cite{cutkosky2023optimalstochasticnonsmoothnonconvex, hazan2023introductiononlineconvexoptimization} consider a convolution-based approach. On the other hand \cite{beck-smoothing, Moreau1965ProximitED} use infimal convolution-based smoothing which has been widely used in non-smooth optimization \cite{dru-weakly-convex}. Specifically, the authors used the norm of the gradient of the Moreau envelope as a measure of progress, which is a proxy for the distance from $0$ to the set of generalized gradients. In the context of SA, \cite{zaiwei-envelope} was the first to use Moreau envelopes to obtain convergence rates for the non-smooth infinity norms, which is common for the analysis of Reinforcement Learning algorithms, and \cite{Chen2025-sa-concentration} utilizes this result to obtain concentration bounds for SA iterates. However, the authors relied on the contractive property for analysis, which can be limiting as it excludes a wide range of potential operators without such property and the systems in control settings may not have such properties. The paper \cite{osinenko-stabilization} used the inf-convolution operator to analyze the stochastic stabilization when a control policy exists. In contrast to these prior works, we consider general nonlinear control problems and study them under various drift/stability conditions. The case when one has exponential stability and nonsmooth Lyapunov function  (Section \ref{ssec: non-smooth exponential}) can be thought of as a natural generalization of the setting in \cite{zaiwei-envelope}, and we show that the smoothing of \cite{zaiwei-envelope} approach works well beyond the contractive setting as long as one has exponential stability. In addition, we study the cases where one has subexponential stability, which was not studied in the prior literature, and we show that a naive adaptation of the smoothing from \cite{zaiwei-envelope} does not work, and so, we independently develop a time-varying smoothing approach, which is also previously used in composite optimization \cite{variable-smoothing-weakly-convex}.

%In addition to time derivative assumptions, the quadratic growth and the quadratic upper bound assumption are prevalent in continuous optimization with multiple prior works leveraging quadratic growth and quadratic upper bound conditions to achieve better rate \cite{quadratic-growth-drusvyatskiy, quadratic-growth-characteristic, quadratic-upper-bound}. The quadratic growth can be somewhat viewed as a weak strong-convexity since when combined with convexity, it can be used to prove the Polyak-Lojasiewicz condition, which is equivalent to exponential stability in the gradient descent context.

\subsection{Paper organization}
The paper is organized as follows. Section \ref{sec:prelim} presents the necessary preliminaries such as the notations, the system model, and the general assumptions used in this work. Section \ref{sec:main-results} presents the problem formulation, assumptions, and results in all four cases. We present the finite-time convergence bounds and also show that they immediately imply almost sure convergence results. 
Section \ref{sec:proof-outline} presents the proof outline and the key ideas in the proofs. Section \ref{sec:experiments} presents numerical experiments demonstrating the convergence rate in various examples. Finally, complete proofs of the main results are presented in the Appendix.

\section{Preliminaries}
\label{sec:prelim}
\subsection{Notations}
\label{ssec: notations}
To assist readers' comprehension, we declare a few notations that will be used throughout this work. We denote $\norm{\cdot}_p$ as the $\ell_p$ norm, the distance of a point $x$ to a set $\cX$ as $\operatorname{dist}(x_k,\cX)$ and for any function $f: \cX \rightarrow \R$, we define the proximal operator as $\operatorname{prox}$ where
\begin{align}
    \label{eqn: prox-definition}
    \operatorname{prox}_f(x) = \arg \min_{u \in \cX} \br{ f(u) + \frac{\norm{x-u}^2}{2} }.
\end{align}
Note that we also have
\begin{align}
    \label{eqn: scaled-prox}
    \operatorname{prox}_{\lambda f}(x) = \arg \min_{u \in \cX} \br{ \lambda f(u) + \frac{\norm{x-u}^2}{2} } = \arg \min_{u \in \cX} \br{ f(u) + \frac{\norm{x-u}^2}{2\lambda} }.
\end{align}
In addition, for any two functions $f,g: \cX \rightarrow \R$, we define operator $\square$ as the infimal convolution operation where
\begin{align}
    \label{eqn: inf-convolution-definition}
    f \square g (x) = \min_{u \in \cX} \left\lbrace f(u) + g(x-u)\right\rbrace.
\end{align}
From here, we go on to define the Moreau envelope $M_\mu$ of the function $f: \cX \rightarrow \R$ with respect to the smoothness parameter $\mu$ as
\begin{align}
    \label{eqn: moreau-envelope-definition}
    M_\mu^f(x) = f \square \frac{\norm{\cdot}^2}{2\mu} = \min_{u \in \cX} \br{ f(u) + \frac{\norm{x-u}^2}{2\mu} }.
\end{align}
With a slight abuse of notation, we write $M_\mu$ in our manuscript as the Moreau envelope of some function to be defined.

\subsection{System model and general assumptions}
\label{ssec: assumptions}
% \textcolor{red}{rewrite this framework} From the general control framework:
% \begin{align}
%     \label{control-framework}
%     \dot{x} = F(x,u) \forall x \in \mathbb{R}^d
% \end{align}
Consider the following noiseless discrete dynamics:
\begin{align}
    \label{general-update-rule}
    x_{k+1} = x_k + \alpha_k F(x_k,u_k)
\end{align}
where $\F: \mathbb{R}^d \rightarrow \mathbb{R}$ be a possibly nonlinear mapping, $x, u \in \R^d$ are the state vector and the control vector respectively. Suppose that the control $u$ follows a feed-in control rule $u = \rho(x)$ where $\rho:\R^d \rightarrow \R^m$, the core of our interests is finding the solution $x^*$ to the stationary point equation $F(x,\rho(x)) = 0$. In control, this is equivalent to finding a control that stabilizes our system given the state.\\
\\
Now, suppose that we can only obtain the value of $F$ via a noisy oracle $\tilde{F}$ such that for any $x$ it will return $\tilde{F}(x,u) = F(x,u) + w$ where $w$ is the noise (which can be dependent on the state value $x$ and the control $u$). 
\begin{align}
    \label{update-rule}
    x_{k+1} = x_k + \alpha_k(F(x_k,u_k) + w_k)
\end{align}
This happens when the control agent tries to obtain environmental inputs via some device, which can be contaminated by noise or device inaccuracies. Let $\mathcal{F}_k = \{x_0,w_0,...,x_{k-1},w_{k-1},x_k\}$ where $\{w_k\}$ is a martingale difference sequence with some mild conditions on its variance. We have some assumptions on the noise $w_k$ as follows:
\begin{assumption}
\label{assumption: noise}
(Noise assumptions) The noise $w_k$ is unbiased, that is for any $k \in \Z^+$:
\begin{align}
    \label{assumption: unbiased-noise}
    \mathbb{E}[w_k | \mathcal{F}_k] = 0
\end{align}
and the noise is square-integrable. That is for $e > 0$:
\begin{align}
    \label{assumption: bounded-variance}
    \mathbb{E}[\norm{w_k}_e^2 | \mathcal{F}_k] \leq A + B\norm{x_k}_e^2.
\end{align}
\end{assumption}
When $B > 0$ in \eqref{assumption: bounded-variance}, the magnitude of the so-called multiplicative noise can potentially scale with $x_k$ \cite{Chen2025-sa-concentration, lam-nguyen-hogwild}. In addition to these noise assumptions, we also assume that $F$ is Lipschitz:
\begin{assumption}
\label{assumption: F-lipschitz} (Lipschitz assumption of $F$) There exists a positive constant $C$ such that:
\begin{align}
    \label{lipschitz-assumption}
    \norm{F(x,u) - F(y,u)}_e \leq C\norm{x-y}_e
\end{align}
for any $x,y \in \R^d$ and $u \in \R^m$.
\end{assumption}
Unless specified otherwise, we assume $e = 2$ for Assumption \ref{assumption: noise}, \ref{assumption: F-lipschitz}. In the Stochastic Gradient Descent (SGD) algorithm where $F(x) = -\nabla V(x)$, this assumption is the gradient Lipschitz assumption that is vital to ensure the stability of the algorithm. In addition, this assumption is a much more relaxed assumption than the usual contractive operator assumption used in \cite{zaiwei-envelope}. Not having to rely on the contractive property will allow us to apply our results to a wider range of problems rather than just the optimal control problem with time-discounted rewards. From these assumptions, we will proceed to analyze the systems in different settings as follows:

\begin{comment}
In addition, we have some assumptions on our Lyapunov function. Assume that the Lyapunov function would satisfy some stability assumptions:
\begin{align}
    \langle \nabla V(x), F(x) \rangle \leq -\gamma V(x)^c \forall x
\end{align}
For $c = 1$, this is exponential stability assumption while for $c > 1$, this becomes sub-exponential stability. In addition, we also assume the chosen Lyapunov function with polynomial growth:
\begin{align}
    C_1 \norm{x-x^*}^a \leq V(x) \leq C_2\norm{x-x^*}^a
\end{align}
\end{comment}

\section{Main Results}
\label{sec:main-results}

\subsection{Stochastic control of smooth exponentially stable systems}
\label{ssec: smooth-exponential}
First, for completeness, we will consider smooth, exponentially stable systems. This setting is widespread in Stochastic Optimization (which is analogous to the smooth, strongly-convex case \cite{gd-polyak-lojasiewicz}), Markov Chain mixing \cite{hoang-erlang-c-mixing} and Reinforcement Learning \cite{nonlinear-sa}. To that end, let us assume that we have the following assumptions of $V$:
% \begin{assumption}
% \label{assumption: polynomial-growth} (Polynomial growth assumption of the Lyapunov function) There exists positive constants $C_1,C_2$ such that:
% \begin{align}
%     C_1 \norm{x-x^*}^a \leq V(x) \leq C_2\norm{x-x^*}^a \forall x \in \R^d.
% \end{align}
% \end{assumption}
% This assumption restricts how large or how small $V$ can be given its distance to $x^*$. In addition, it allows us to obtain a bound on the distance to $x^*$ using $V$ despite not knowing about the geometry of the optimization surface or knowing very little about $V$. Next, we will have an assumption on how fast $V$ can change w.r.t time:
\begin{assumption}
\label{assumption: smooth-time-derivative}
(Exponential stability assumption of the Lyapunov function) Assume that the gradient of $V$ exists everywhere, we have that:
 \begin{align}
    \label{eqn: smooth-time-derivative}
    \langle \nabla V(x), F(x) \rangle \leq - \gamma V(x) \forall x \in \R^d
\end{align}
for some constant $\gamma > 0$.
\end{assumption}
In the Stochastic Gradient Descent (SGD) algorithm, we have that $F(x) = -\nabla V(x)$. Thus, the Assumption \ref{assumption: smooth-time-derivative} is equivalent to $\norm{\nabla V(x)}^2 \geq \gamma V(x) \forall x \in \R^d$, which is the Polyak-Lojasiewicz condition. Combining with the gradient Lipschitz assumption, its convergence results are known \cite{gd-polyak-lojasiewicz}. Here, we are considering a more general operator $F$ rather than a gradient of some functions. In addition, in the context of continuous dynamical systems, such conditions imply the global exponential stability of the system, meaning that the system will approach the equilibrium point exponentially fast. This is because the LHS of \eqref{eqn: smooth-time-derivative} is somewhat approximating the time derivative of $V$ since $\dot{V}(x) = \langle \nabla V(x), \dot{x} \rangle \approx \langle \nabla V(x), F(x) \rangle$.

\begin{assumption}
\label{assumption: gradient-lipschitz}
(Lipschitz gradient assumption) There exists a positive constant $L$ such that:
\begin{align}
    V(y) \leq V(x) + \langle \nabla V(x), y-x \rangle + \frac{L\norm{x-y}_e^2}{2} \,\, \forall x,y \in \mathbb{R}^d
\end{align}
\end{assumption}
Unless specified otherwise, we assume $e = 2$ by default. The gradient Lipschitz assumption is commonly found in optimization literature and is commonly used to obtain a first-order upper bound on the objective function. In addition, this assumption also implies that $V$ is upper-bounded by some quadratic function as well. Indeed, from Assumption \ref{assumption: gradient-lipschitz} we have:
\begin{align*}
    V(x) \leq V(x^*) + \frac{L\norm{x-x^*}_e^2}{2} \,\, \forall x \in \mathbb{R}^d
\end{align*}
hence we have $V$ is upper-bounded by some quadratic function. This gives rise to the following assumption:
\begin{assumption}
\label{assumption: quadratic-growth}
(Quadratic growth assumption of $V$) There exists positive constants $C_1,C_2$ such that:
\begin{align}
    C_1 \norm{x-x^*}_e^2 \leq V(x) \leq C_2\norm{x-x^*}_e^2 \,\, \forall x \in \R^d
\end{align}
for some $e > 0$.
\end{assumption}
Unless specified otherwise, we consider $e = 2$ by default. Now, we are ready to state the following results:
\begin{theorem}
\label{finite-time-corollary-0}
Under Assumptions \ref{assumption: noise}, \ref{assumption: F-lipschitz}, \ref{assumption: smooth-time-derivative}, \ref{assumption: gradient-lipschitz}, \ref{assumption: quadratic-growth} and let the step size $\alpha_k = \frac{\alpha}{(k+K)^\xi}$ where $K = \max\left\lbrace 1, \frac{\alpha (4C^2 + 8B)}{\gamma} \right\rbrace$ for $\xi = 1$ and $K = \max \left\lbrace 1, \left( \frac{\alpha (4C^2 + 8B)}{\gamma} \right)^{\frac{1}{\xi}}, \left( \frac{2 \xi}{\alpha \gamma} \right)^{\frac{1}{1-\xi}}\right\rbrace$ for $\xi \in (0,1)$, we have:
\begin{align*}
    &\textbf{For $\xi = 1$}: \\
    &\E[\norm{x_{k}-x^*}^2] \leq \frac{C_2}{C_1} \norm{x_0-x^*}^2 \br{\frac{K}{k+K}}^{\frac{\alpha \gamma}{2}} + 
    \begin{cases}
        O\br{\frac{1}{k^{\frac{\alpha \gamma}{2}}}} &\text{ if } \alpha \in \br{0, \frac{2}{\gamma}}\\
        O\br{\frac{\log k}{k}} &\text{ if } \alpha = \frac{2}{\gamma} \\
        O\br{\frac{1}{k}} &\text{ if } \alpha \in \br{\frac{2}{\gamma}, \infty}
    \end{cases}
    \\
    &\textbf{For $\xi \in (0,1)$}: \\
    &\mathbb{E}[\norm{x_{k}-x^*}^2] \leq \frac{C_2}{C_1} \norm{x_0-x^*}^2 \exp\left[-\frac{\alpha \gamma}{2(1-\xi)}((k+K)^{1-\xi}-K^{1-\xi}) \right] + \frac{4\alpha L (A + 2B\norm{x^*}^2)}{\gamma (k+K)^{\xi}} \qquad\qquad \\
    &\textbf{For $\xi = 0$}: \\
    &\mathbb{E}[\norm{x_{k}-x^*}^2] \leq \frac{C_2}{C_1} \norm{x_0-x^*}^2 \left( 1-\frac{\alpha \gamma}{2} \right)^k + \frac{2L (A + 2B\norm{x^*}^2)\alpha}{\gamma}
\end{align*}
\end{theorem}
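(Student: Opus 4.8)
The idea is to use the given Lyapunov function $V$ \emph{itself} as a Lyapunov function for the discrete stochastic recursion — which is legitimate here precisely because $V$ is smooth (Assumption~\ref{assumption: gradient-lipschitz}) — and then run a standard stochastic-approximation drift argument. Write $F(x_k)$ for the closed-loop drift $F(x_k,\rho(x_k))$, so that $F(x^*)=0$. Applying the descent lemma of Assumption~\ref{assumption: gradient-lipschitz} along the update $x_{k+1}=x_k+\alpha_k(F(x_k)+w_k)$ gives
\begin{align*}
V(x_{k+1}) \le V(x_k) + \alpha_k\langle\nabla V(x_k),F(x_k)+w_k\rangle + \frac{L\alpha_k^2}{2}\norm{F(x_k)+w_k}_2^2 .
\end{align*}
Taking $\E[\,\cdot\mid\mathcal F_k]$, the inner product against $w_k$ vanishes by unbiasedness (Assumption~\ref{assumption: noise}); the deterministic inner product is at most $-\gamma V(x_k)$ by Assumption~\ref{assumption: smooth-time-derivative}; and the second-order term is bounded using $\norm{F(x_k)+w_k}_2^2\le 2\norm{F(x_k)}_2^2+2\norm{w_k}_2^2$, $\norm{F(x_k)}_2=\norm{F(x_k)-F(x^*)}_2\le C\norm{x_k-x^*}_2$ (Assumption~\ref{assumption: F-lipschitz}), $\E[\norm{w_k}_2^2\mid\mathcal F_k]\le A+B\norm{x_k}_2^2\le A+2B\norm{x_k-x^*}_2^2+2B\norm{x^*}_2^2$ (Assumption~\ref{assumption: noise}), and finally $\norm{x_k-x^*}_2^2\le V(x_k)/C_1$ from the lower bound of Assumption~\ref{assumption: quadratic-growth}. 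Collecting terms yields a one-step drift
\begin{align*}
\E[V(x_{k+1})\mid\mathcal F_k] \le \br{1-\gamma\alpha_k+c_1\alpha_k^2}\,V(x_k) + c_2\alpha_k^2 ,
\end{align*}
with $c_1$ of order $L(C^2+B)/C_1$ and $c_2$ of order $L(A+B\norm{x^*}_2^2)$.

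The value of $K$ is dictated by requiring $\alpha_k$ small enough that the negative drift dominates the quadratic correction for every $k\ge0$: since $\alpha_k$ is nonincreasing, the stated bound on $K$ (namely $K\ge\alpha(4C^2+8B)/\gamma$ for $\xi=1$, with the corresponding $\xi$-dependent form for $\xi\in(0,1)$) forces $c_1\alpha_k\le\gamma/2$, so that $\E[V(x_{k+1})\mid\mathcal F_k]\le(1-\tfrac{\gamma\alpha_k}{2})V(x_k)+c_2\alpha_k^2$. Taking total expectations and unrolling,
\begin{align*}
\E[V(x_k)] \le \Big(\prod_{j=0}^{k-1}(1-\tfrac{\gamma\alpha_j}{2})\Big)\E[V(x_0)] + c_2\sum_{j=0}^{k-1}\alpha_j^2\prod_{i=j+1}^{k-1}(1-\tfrac{\gamma\alpha_i}{2}) .
\end{align*}
For the first term, $1-t\le e^{-t}$ gives $\prod_j(1-\tfrac{\gamma\alpha_j}{2})\le\exp(-\tfrac{\gamma}{2}\sum_j\alpha_j)$, and $\sum_{j=0}^{k-1}\alpha_j=\alpha\sum_{j}(j+K)^{-\xi}$ is estimated by comparison with an integral: for $\xi=1$ it is at least $\ln\frac{k+K}{K}$ (so the product is at most $(\tfrac{K}{k+K})^{\alpha\gamma/2}$); for $\xi\in(0,1)$ it is at least $\frac{(k+K)^{1-\xi}-K^{1-\xi}}{1-\xi}$; and for $\xi=0$ the product is exactly $(1-\tfrac{\alpha\gamma}{2})^k$. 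Passing back via $\E[V(x_0)]\le C_2\norm{x_0-x^*}_2^2$ and $\E[\norm{x_k-x^*}_2^2]\le\E[V(x_k)]/C_1$ yields the $\frac{C_2}{C_1}\norm{x_0-x^*}_2^2(\cdots)$ leading term in all three regimes.

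What remains — and what carries essentially all the bookkeeping, hence the main obstacle — is the variance term $c_2\sum_j\alpha_j^2\prod_{i=j+1}^{k-1}(1-\tfrac{\gamma\alpha_i}{2})$. For $\xi=0$ it is a geometric series bounded by $2c_2\alpha/\gamma$, the stated constant floor. For $\xi\in(0,1)$, one shows by induction on $k$ that it is at most $C'(k+K)^{-\xi}$ for a suitable $C'=\Theta(c_2\alpha/\gamma)$; the inductive step reduces to the elementary inequality $(k+K)^{-\xi}-(k{+}1{+}K)^{-\xi}\le\xi(k+K)^{-\xi-1}$ and is exactly where the condition $K\ge(2\xi/\alpha\gamma)^{1/(1-\xi)}$ is used, and it produces the $\frac{4\alpha L(A+2B\norm{x^*}_2^2)}{\gamma}(k+K)^{-\xi}$ term. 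For $\xi=1$, using $\prod_{i=j+1}^{k-1}(1-\tfrac{\alpha\gamma/2}{i+K})\le(\tfrac{j+K}{k+K})^{\alpha\gamma/2}$ reduces the sum to $\frac{c_2\alpha^2}{(k+K)^{\alpha\gamma/2}}\sum_{j}(j+K)^{\alpha\gamma/2-2}$, and whether the exponent $\alpha\gamma/2-2$ exceeds, equals, or is below $-1$ — equivalently whether $\alpha\gtrless 2/\gamma$ — yields the three cases $O(1/k)$, $O(\log k/k)$, $O(1/k^{\alpha\gamma/2})$. The only genuinely delicate points are the consistent choice of the induction constant and threshold when $\xi\in(0,1)$ and the resonant case $\alpha=2/\gamma$ for $\xi=1$; everything else is routine.
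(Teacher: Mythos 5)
Your proposal is correct and follows essentially the same route the paper takes: it uses the smooth $V$ directly as the discrete-time Lyapunov function via the descent lemma of Assumption~\ref{assumption: gradient-lipschitz}, derives the one-step drift $\E[V(x_{k+1})\mid\mathcal F_k]\le(1-\tfrac{\gamma\alpha_k}{2})V(x_k)+O(\alpha_k^2)$, and unrolls with the same integral-comparison, casework-on-$\alpha\gamma/2$, and induction arguments that the paper carries out in full for Theorem~\ref{finite-time-corollary-1} (the paper explicitly omits this proof as the $\mu$-free specialization of that argument). The only cosmetic difference is that your constants $c_1,c_2$ are stated up to order rather than matched exactly to the paper's $L(C^2+2B)/C_1$ and $L(A+2B\norm{x^*}_2^2)$, which does not affect correctness.
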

A typical function $V$ satisfying Assumption \ref{assumption: smooth-time-derivative}, \ref{assumption: gradient-lipschitz}, \ref{assumption: quadratic-growth} is the quadratic function $x^TAx$ where $A$ is a positive definite matrix. The exact statement and the proof of this result follow from \cite{nonlinear-sa} and the Subsection \ref{ssec: non-smooth exponential} in this work so we omit the details here. The results in Theorem \ref{finite-time-corollary-0} show that the best asymptotic complexity is $O\br{\frac{1}{k}}$ when $\xi = 1, \alpha \geq \frac{2}{\gamma}$, which matches the optimal complexity of the SGD algorithm for the strongly-convex setting. Note that we present this result for the sake of completeness. In the subsequent subsections, we will consider more difficult settings that would be the core of our work. %\textcolor{red}{Add some explanations and implications of the optimal rates. This result is a minor generalization where the Lyapunov function is the l-2 norm. We present this result to start with the simplest setting and for the sake of completeness. In the next subsections, we will consider other settings}.

\subsection{Stochastic control of non-smooth exponentially stable systems}
\label{ssec: non-smooth exponential}
% Here, we will use the Moreau-Yosida envelope to approximate the desired Lyapunov function while maintaining the much-needed smoothness property.\\
% \\
% Assume that we have the following assumptions of $V$:
% \begin{align}
%     \label{time-derivative}
%     \langle \nabla V(x), F(x) \rangle \leq - \gamma V(x)
% \end{align}
% \begin{align}
%     \label{polynomial-assumption}
%     C_1 \norm{x-x^*}^a \leq V(x) \leq C_2\norm{x-x^*}^a
% \end{align}
% for $a, \gamma > 0$. Assumption \eqref{polynomial-assumption} implies that $V$ is upper and lower bounded by some polynomials.
In many control applications, oftentimes we find that having smoothness is a luxury. For example, the states are usually measured in discrete times rather than being continuously measured since most device is not capable of doing such a task. Thus, this necessitates an analysis of the non-smooth setting, which is far more widespread in real-world applications.

In absence of a smooth $V$, even many assumptions in the smooth exponentially stable case will cease to be meaningful. For example, the Assumption \ref{assumption: smooth-time-derivative} does not hold since the gradient does not exist everywhere in the non-smooth setting. Thus, we define an analogous Assumption \ref{assumption: smooth-time-derivative} using Clarke generalized gradient \cite{clarke1983oan, Clarke1998-nonsmooth-analysis-control-theory} as following: for a locally Lipschitz function $V$, define the generalized gradient of $V$ at $x$ by:
\begin{align}
    % http://ndl.ethernet.edu.et/bitstream/123456789/23543/1/F.%20H.%20Clarke.pdf
    \label{eqn: clark-gradient-definition}
    \partial V(x) = \overline{co}\{\lim \nabla V(x_i) | x_i \rightarrow x, x_i \in \Omega_V \}
\end{align}
where $\overline{co}$ denotes the closed convex hull and $\Omega_V$ is the set of points where the gradient of $V$ exists. The time derivative Assumption \ref{assumption: smooth-time-derivative} can be written as:
\begin{assumption}
\label{assumption: clarke-time-derivative}
(Exponential stability assumption of the Lyapunov function with the Clarke generalized gradient) Let $x \in \R^d$, we have that the following holds:
\begin{align}
    \label{clarke-time-derivative}
    \langle g_x, F(x) \rangle \leq -\gamma V(x) \forall x \in \mathbb{R}^d, g_x \in \partial V(x)
\end{align}
\end{assumption}
In addition, we also assume that the value of $V$ is bounded by some polynomial whose input is the distance to $x^*$, that is:
\begin{assumption}
\label{assumption: polynomial-growth} (Polynomial growth assumption of the Lyapunov function) There exists positive constants $C_{1,a},C_{2,a}$ such that:
\begin{align}
    C_{1,a} \norm{x-x^*}^a \leq V(x) \leq C_{2,a} \norm{x-x^*}^a \,\, \forall x \in \R^d.
\end{align}
\end{assumption}
This assumption controls the magnitude of $V$ given its distance to $x^*$ and allows us to obtain a bound on $\norm{x-x^*}$ using $V$. Moreover, since $\min_{x \in \R^d} V(x) = V(x^*) = 0$, the left inequality can be interpreted as the sharpness condition in the optimization literature \cite{Davis2018-subgradient-sharp-weakly, ding2024sharpnesswellconditioningnonsmoothconvex}. Next, we have the norm of the generalized gradient of $V$ at $x$ is also bounded by some polynomial as well:\begin{assumption}
\label{assumption: generalized-gradient-bound}
(Gradient growth assumption) Let $x \in \R^d$, for any $g_x \in \partial V(x)$, we have that:
\begin{align}
    \label{generalized-gradient-bound-assumption}
    \norm{g_x} \leq G \norm{x-x^*}^{a-1}
\end{align}
\end{assumption}
For $a = 2$, this condition implies a linear upper bound on the generalized gradient. The linear growth assumption of the gradient can be found in several works in non-smooth optimization \cite{yu-constant-stepsize} and control \cite{khalil-book}. In the smooth case, this assumption automatically holds since the Assumption \ref{assumption: gradient-lipschitz} implies linear growth of the gradient.\\
\\
% \textbf{Remark}: Note that \eqref{assumption: clarke-time-derivative} and \eqref{assumption: F-lipschitz} can be used to prove the lower bound of \eqref{assumption: polynomial-growth} and \eqref{assumption: generalized-gradient-bound} is sufficient to infer the upper bound of \eqref{assumption: polynomial-growth}.\\
% \\
Now that we have established the necessary prerequisites, we have the following finite-time bounds:
\begin{theorem}
\label{finite-time-corollary-1}
Under Assumptions \ref{assumption: noise},  \ref{assumption: F-lipschitz}, \ref{assumption: clarke-time-derivative}, \ref{assumption: polynomial-growth}, \ref{assumption: generalized-gradient-bound}, and with the step size $\alpha_k = \frac{\alpha}{\br{k+K}^{\xi}}$ where $K = \max\left\lbrace 1, \frac{\alpha (4C^2 + 8B)}{\gamma} \right\rbrace$ for $\xi = 1$ and $K = \max \left\lbrace 1, \left( \frac{\alpha (4C^2 + 8B)}{\gamma} \right)^{\frac{1}{\xi}}, \left( \frac{2 \xi}{\alpha \gamma} \right)^{\frac{1}{1-\xi}}\right\rbrace$ for $\xi \in (0,1)$, we have the finite-time bound of the SA algorithm is:
\begin{align*}
&\textbf{For $\xi = 1$}:\\
&\mathbb{E}[\norm{x_{k}-x^*}^2] \leq C_{2,a}^{\frac{2}{a}}\br{\frac{1}{C_{1,a}^{\frac{2}{a}}} + 2\mu} \norm{x_0-x^*}^2 \left( \frac{K}{k+K} \right)^{\frac{\alpha \gamma_M}{2}} + 
    % detailed version
    % \begin{cases}
    %     \frac{8\alpha^2}{2-\alpha \gamma_M} \frac{1}{(k+K)^{\frac{\alpha \gamma_M}{2}}} \frac{A + 2B\norm{x^*}^2}{\mu} &\text{ if } \alpha \in \br{0, \frac{2}{\gamma_M}}\\
    %     \frac{4 \alpha^2\log(k+K)}{k+K} \frac{A + 2B\norm{x^*}^2}{\mu} &\text{ if } \alpha = \frac{2}{\gamma_M} \\
    %     \frac{8e\alpha^2}{\alpha \gamma_M - 2} \frac{1}{k+K} \frac{A + 2B\norm{x^*}^2}{\mu} &\text{ if } \alpha \in \br{\frac{2}{\gamma_M}, \infty}
    % \end{cases}
    \begin{cases}
        O\br{\frac{1}{k^{\frac{\alpha \gamma_M}{2}}}} &\text{ if } \alpha \in \br{0, \frac{2}{\gamma_M}}\\
        O\br{\frac{\log k}{k}} &\text{ if } \alpha = \frac{2}{\gamma_M} \\
        O\br{\frac{1}{k}} &\text{ if } \alpha \in \br{\frac{2}{\gamma_M}, \infty}
    \end{cases} \qquad\qquad\qquad \\
&\textbf{For $\xi \in (0,1)$}:\\
&\mathbb{E}[\norm{x_{k}-x^*}^2] \leq C_{2,a}^{\frac{2}{a}}\br{\frac{1}{C_{1,a}^{\frac{2}{a}}} + 2\mu} \norm{x_0-x^*}^2 \exp\left[-\frac{\alpha \gamma_M}{2(1-\xi)}((k+K)^{1-\xi}-K^{1-\xi}) \right] \\
&\qquad\qquad\qquad + \frac{4\alpha}{\gamma_M (k+K)^{\xi}} \frac{A + 2B\norm{x^*}^2}{\mu}\br{\frac{1}{C_{1,a}^{\frac{2}{a}}} + 2\mu}. \\
&\textbf{For $\xi = 0$}: \\
&\mathbb{E}[\norm{x_{k}-x^*}^2] \leq C_{2,a}^{\frac{2}{a}}\br{\frac{1}{C_{1,a}^{\frac{2}{a}}} + 2\mu} \norm{x_0-x^*}^2 \left( 1-\frac{\alpha \gamma_M}{2} \right)^k + \frac{2(A + 2B\norm{x^*}^2)\alpha}{\mu \gamma_M}\br{\frac{1}{C_{1,a}^{\frac{2}{a}}} + 2\mu}.
\end{align*}
% \\
% \textbf{For $\xi = 1$}:
% \begin{itemize}
%     \item $\alpha \gamma_M < 2: \mathbb{E}[\norm{x_{k+1}-x^*}^2] \leq C_{2,a}^{\frac{2}{a}} \norm{x_0-x^*}^2 \left( \frac{K}{k+1+K} \right)^{\frac{\alpha \gamma_M}{2}} + \frac{8\alpha^2}{2-\alpha \gamma_M} \frac{1}{(k+1+K)^{\frac{\alpha \gamma_M}{2}}} \frac{A + 2B\norm{x^*}^2}{\mu}$
%     \item $\alpha \gamma_M = 2: \mathbb{E}[\norm{x_{k+1}-x^*}^2] \leq C_{2,a}^{\frac{2}{a}} \norm{x_0-x^*}^2 \left( \frac{K}{k+1+K} \right)^{\frac{\alpha \gamma_M}{2}} + \frac{4 \alpha^2\log(k+1+K)}{k+1+K} \frac{A + 2B\norm{x^*}^2}{\mu}$
%     \item $\alpha \gamma_M > 2: \mathbb{E}[\norm{x_{k+1}-x^*}^2] \leq C_{2,a}^{\frac{2}{a}} \norm{x_0-x^*}^2 \left( \frac{K}{k+1+K} \right)^{\frac{\alpha \gamma_M}{2}} + \frac{8e\alpha^2}{\alpha \gamma_M - 2} \frac{1}{k+1+K} \frac{A + 2B\norm{x^*}^2}{\mu}$
% \end{itemize}
% \textbf{For $\xi \in (0,1)$}:
% \begin{itemize}
%     \item $\mathbb{E}[\norm{x_{k+1}-x^*}^2] \leq C_{2,a}^{\frac{2}{a}} \norm{x_0-x^*}^2 \exp\left[-\frac{\alpha \gamma_M}{2(1-\xi)}((k+1+K)^{1-\xi}-K^{1-\xi}) \right] + \frac{4\alpha}{\gamma_M (k+1+K)^{\xi}} \frac{A + 2B\norm{x^*}^2}{\mu}$
% \end{itemize}
% \textbf{For $\xi = 0$}:
% \begin{itemize}
%     \item $\mathbb{E}[\norm{x_{k+1}-x^*}^2] \leq C_{2,a}^{\frac{2}{a}} \norm{x_0-x^*}^2 \left( 1-\frac{\alpha \gamma_M}{2} \right)^k + \frac{2(A + 2B\norm{x^*}^2)\alpha}{\mu \gamma_M}$
% \end{itemize}
\end{theorem}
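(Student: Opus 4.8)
The plan is to transport the non-smooth continuous-time Lyapunov function $V$ to a smooth surrogate for the discrete recursion via a fixed-parameter Moreau envelope, and then run the usual stochastic-approximation drift recursion on that surrogate. First I would reduce to quadratic growth by setting $W := V^{2/a}$ (no change needed when $a=2$): by Clarke's chain rule $\partial W(x) \subseteq \tfrac{2}{a}V(x)^{2/a-1}\partial V(x)$, so Assumption~\ref{assumption: clarke-time-derivative} upgrades to $\langle g, F(x)\rangle \le -\tfrac{2\gamma}{a}W(x)$ for every $g\in\partial W(x)$; Assumption~\ref{assumption: polynomial-growth} becomes the quadratic sandwich $C_{1,a}^{2/a}\|x-x^*\|_2^2 \le W(x) \le C_{2,a}^{2/a}\|x-x^*\|_2^2$; and Assumption~\ref{assumption: generalized-gradient-bound} combined with the growth bounds gives a linear bound $\|g\| \le G_W\|x-x^*\|_2$ for $g\in\partial W(x)$, with $G_W$ an explicit constant. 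Next, let $M_\mu := M_\mu^W$ be the Moreau envelope \eqref{eqn: moreau-envelope-definition} with $p=2$, and for each $x$ fix a measurable selection $u(x)\in\operatorname{prox}_{\mu W}(x)$ (nonempty since $W\ge 0$ is continuous and coercive). I would record three facts: (i) first-order optimality gives $v(x):=\tfrac{x-u(x)}{\mu}\in\partial W(u(x))$ and $M_\mu(x)=W(u(x))+\tfrac{1}{2\mu}\|x-u(x)\|_2^2$; (ii) comparing the exact value at $u(x)$ with the feasibility of $u(x)$ for $M_\mu(y)$ gives the descent inequality $M_\mu(y)\le M_\mu(x)+\langle v(x),y-x\rangle+\tfrac{1}{2\mu}\|y-x\|_2^2$, which needs no smoothness of $W$; (iii) minimizing the quadratic lower model $C_{1,a}^{2/a}\|u-x^*\|_2^2+\tfrac{1}{2\mu}\|x-u\|_2^2$ in closed form gives $\tfrac{C_{1,a}^{2/a}}{1+2\mu C_{1,a}^{2/a}}\|x-x^*\|_2^2 \le M_\mu(x) \le C_{2,a}^{2/a}\|x-x^*\|_2^2$, i.e.\ $\|x-x^*\|_2^2 \le (\tfrac{1}{C_{1,a}^{2/a}}+2\mu)M_\mu(x)$ and $M_\mu(x_0)\le C_{2,a}^{2/a}\|x_0-x^*\|_2^2$.

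The crux is the drift of $M_\mu$ along $F$: I want $\langle v(x_k),F(x_k)\rangle \le -\gamma_M M_\mu(x_k)$ with $\gamma_M=\tfrac{2\gamma}{a}-O(\mu)>0$ for $\mu$ small. Splitting $F(x_k)=F(u(x_k))+(F(x_k)-F(u(x_k)))$, the first term is $\le -\tfrac{2\gamma}{a}W(u(x_k))$ because $v(x_k)\in\partial W(u(x_k))$, and the second is $\le \|v(x_k)\|\,C\|x_k-u(x_k)\| = \tfrac{C}{\mu}\|x_k-u(x_k)\|_2^2$ by Assumption~\ref{assumption: F-lipschitz} and $\|v(x_k)\|=\|x_k-u(x_k)\|/\mu$. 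Substituting $W(u(x_k))=M_\mu(x_k)-\tfrac{1}{2\mu}\|x_k-u(x_k)\|_2^2$ and then using the linear gradient bound $\|x_k-u(x_k)\|=\mu\|v(x_k)\|\le \mu G_W\|u(x_k)-x^*\|$ together with $\|u(x_k)-x^*\|_2^2 \le W(u(x_k))/C_{1,a}^{2/a} \le M_\mu(x_k)/C_{1,a}^{2/a}$ shows the residual is $O(\mu)\,M_\mu(x_k)$, which is absorbed into the leading drift and fixes $\gamma_M$.

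With this in hand I would close the argument in the standard way: apply the descent inequality (ii) with $y=x_{k+1}$, $x=x_k$ and $x_{k+1}-x_k=\alpha_k(F(x_k)+w_k)$, then take $\E[\,\cdot\mid\mathcal{F}_k]$. The noise cross-term vanishes since $v(x_k)$ is $\mathcal{F}_k$-measurable and $\E[w_k\mid\mathcal{F}_k]=0$; the quadratic term is controlled by $\E[\|F(x_k)+w_k\|_2^2\mid\mathcal{F}_k]\le (C^2+2B)\|x_k-x^*\|_2^2 + A+2B\|x^*\|_2^2$ (Assumptions~\ref{assumption: noise},~\ref{assumption: F-lipschitz}, and $F(x^*)=0$); and the first-order term uses the drift bound above. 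Invoking $\|x_k-x^*\|_2^2\le(\tfrac{1}{C_{1,a}^{2/a}}+2\mu)M_\mu(x_k)$ and choosing $K$ as in the statement so $\alpha_k$ is small enough to absorb the $O(\alpha_k^2)$ state-dependent part into half the drift yields $\E[M_\mu(x_{k+1})\mid\mathcal{F}_k]\le(1-\tfrac{\alpha_k\gamma_M}{2})M_\mu(x_k)+\tfrac{\alpha_k^2}{2\mu}(A+2B\|x^*\|_2^2)$. Taking total expectations and unrolling $\prod_j(1-\tfrac{\alpha_j\gamma_M}{2})$ against $\alpha_k=\alpha/(k+K)^\xi$ --- giving a polynomial factor $(K/(k+K))^{\alpha\gamma_M/2}$ for $\xi=1$ (with the sub-cases $\alpha<2/\gamma_M$, $\alpha=2/\gamma_M$, $\alpha>2/\gamma_M$ appearing in the accumulated-noise sum), a stretched-exponential factor for $\xi\in(0,1)$, and a geometric factor for $\xi=0$ --- produces the stated bounds on $\E[M_\mu(x_k)]$; multiplying through by $(\tfrac{1}{C_{1,a}^{2/a}}+2\mu)$ and using $M_\mu(x_0)\le C_{2,a}^{2/a}\|x_0-x^*\|_2^2$ gives the claimed bounds on $\E[\|x_k-x^*\|_2^2]$.

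The main obstacle is the drift step: the Moreau envelope's generalized gradient $v(x_k)$ is evaluated at the proximal point $u(x_k)$, not at $x_k$, so the mismatch $F(x_k)-F(u(x_k))$ must be shown to be a genuinely higher-order perturbation of the descent. This is exactly where the linear gradient-growth Assumption~\ref{assumption: generalized-gradient-bound} is indispensable: it forces $\|x_k-u(x_k)\|=\mu\|v(x_k)\|$ to be $O(\mu\|x_k-x^*\|)$, so the perturbation scales like $\mu\,M_\mu(x_k)$ and $\gamma_M$ stays positive; without such control the perturbation could swamp the negative drift. A secondary subtlety is that $\operatorname{prox}_{\mu W}$ need not be single-valued since $W$ is not assumed convex, so one works with a fixed measurable selection throughout --- which is all that the descent inequality and the conditional-expectation computation actually require.
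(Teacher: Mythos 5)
Your proposal is correct and follows essentially the same route as the paper: rescale to $R=V^{2/a}$, smooth via the Moreau envelope, prove the negative drift $\langle \nabla M_\mu(x),F(x)\rangle\le-\gamma_M M_\mu(x)$ by splitting $F(x)=F(u)+(F(x)-F(u))$ at the proximal point and absorbing the $O(\mu)M_\mu(x)$ residual, then derive the one-iterate contraction and unroll it with the same casework on $\alpha\gamma_M$ and the same induction for $\xi\in(0,1)$. The one refinement worth keeping is that you obtain the quadratic upper bound $M_\mu(y)\le M_\mu(x)+\langle v(x),y-x\rangle+\tfrac{1}{2\mu}\|y-x\|_2^2$ directly from the infimal-convolution definition (and work with a measurable selection of $\operatorname{prox}_{\mu W}$), which avoids invoking convexity and $\tfrac{1}{\mu}$-smoothness of $M_\mu$ — properties the paper's Lemma \ref{lemma: envelope-properties} imports from the convex setting even though $R$ is not assumed convex here.
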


\textcolor{black}{Typical Lyapunov functions satisfying Assumptions \ref{assumption: clarke-time-derivative}, \ref{assumption: polynomial-growth}, \ref{assumption: generalized-gradient-bound} are $\norm{x-x^*}_\infty^2$ or piecewise quadratic functions which can be found in Reinforcement Learning \cite{zaiwei-envelope} or selector control \cite{Johansson2003PiecewiseLC}. Regarding the main result, we have the following discussions.}

\textbf{Discussion of step sizes}: When $\xi = 0$, we do not have convergence but the algorithm converges exponentially fast to some ball, which is in the interests of practitioners. When $\xi \in (0,1]$, we have mean square convergence to the point $x^*$, and the optimal convergence rate of the exponentially stable setting is $O(1/k)$ when $\xi = 1$. Notably, the convergence rate is dependent on the initial step size in this case. On the other hand, when $\xi \in (0,1)$, the convergence rate is independent of the step size, and hence, it is robust. \textcolor{black}{To address the parameter dependence issue when $\xi = 1$, we can employ Polyak-Ruppert averaging $x_n^{PR} = \frac{1}{n} \sum_{k=1}^n x_k$ and assume the following assumptions.
\begin{assumption}
    \label{assumption: ode-vector-field}
    (ODE vector field assumption, Assumption A3 in \cite{kontoyiannisborkar2024odemethodasymptoticstatistics}) The scaled vector field $F_\infty(\theta) = \lim_{r \rightarrow \infty} \frac{F(r\theta)}{r}$ exists for any $\theta \in \R^d$.
\end{assumption}
Now, we would assume an extra condition on the noise, which can be loosely interpreted as the ergodicity condition of the noise.
\begin{assumption}
    \label{assumption: dv3}
    (Assumption DV3 in \cite{kontoyiannisborkar2024odemethodasymptoticstatistics}) For $b > 0$, functions $V: X \rightarrow \R_+, W: X \rightarrow [1,\infty)$ and a small function $s: X \rightarrow [0,1]$ (defined as in \cite{kontoyiannisborkar2024odemethodasymptoticstatistics}) such that
    \begin{align}
        \E\sqbr{e^{V(\theta_{k+1})} \big| \theta_k = x} \leq e^{V(x) - W(x) + b s(x)}.
    \end{align}
\end{assumption}
Then, we obtain the following Central Limit Theorem (CLT) for the nonlinear Stochastic Approximation algorithm from \cite{kontoyiannisborkar2024odemethodasymptoticstatistics}.
\begin{theorem}
    \label{thm: clt-exponential}
    Under step size $\alpha_k = \alpha (k+1)^{-\xi}$ for $\xi \in (1/2, 1]$, Assumptions \ref{assumption: F-lipschitz}, \ref{assumption: clarke-time-derivative}, \ref{assumption: polynomial-growth}, \ref{assumption: ode-vector-field}, \ref{assumption: dv3} such that $\gamma > 4\max\{C, \norm{F(0)}\} \sqbr{a\log 2 + \frac{C_{2,a}}{C_{1,a}}}$, there exists a covariance matrix $\Sigma^{PR}$ so that
    \begin{align}
        \lim_{n \rightarrow \infty} n \E\sqbr{(x_n^{PR}-x^*)(x_n^{PR}-x^*)^T} = \Sigma^{PR}.
    \end{align}
\end{theorem}
The proof of this theorem follows from Theorem 5 of \cite{kontoyiannisborkar2024odemethodasymptoticstatistics}, and we verify the assumptions of this theorem in Appendix \ref{ssec: clt-proof}. This asymptotic result implies that under the Polyak-Ruppert averaging scheme and for a sufficiently large $n$, we have $\E\sqbr{(x_n^{PR}-x^*)(x_n^{PR}-x^*)^T}$ converges with rate $O(1/n)$, matching the optimal convergence rate of the exponentially stable setting.}

\textbf{Connection to contractive operators}: Our result in Theorem \ref{finite-time-corollary-1} can be used to obtain finite-time guarantees for contractive operators in \cite{zaiwei-envelope}. It is known that contractive operators with parameter $\gamma \in (0,1)$ satisfy Assumption \ref{assumption: clarke-time-derivative} with parameter $1-\gamma$. Hence, one can apply Theorem \ref{finite-time-corollary-1} to obtain finite-time guarantees for contractive operators.

Additionally, we can easily extend the finite-time bounds to obtain almost sure convergence results.
\begin{corollary}
    \label{almost-sure-convergence-1}
    Let $\{x_k\}_{k \geq 0}$ be the sequence of iterates generated by the update rule \eqref{eqn: control-framework-discrete}, then when the stepsize sequence $\{\alpha_k\}_{k \geq 0}$ satisfies $\sum_{k = 1}^{\infty}\alpha_k = +\infty$ and $\sum_{k = 1}^{\infty}\alpha_k^2 < +\infty$, we have $x_k$ converges to $x^*$ almost surely.
\end{corollary}
Detailed proof of Corollary \ref{almost-sure-convergence-1} is in Appendix \ref{sssec:almost-sure-convergence-1-proof}. Since we only have restrictions on the step sizes, this corollary is also applicable in the smooth, exponentially stable case. The proof of this corollary is deferred to Subsection \ref{sssec:almost-sure-convergence-1-proof} and its proof outline is deferred to Subsection \ref{ssec: proof-outline-almost-sure}.

\subsection{Stochastic control of smooth sub-exponentially stable systems}
% Going beyond exponential stability, we assume that there exists a generic function $W$ that is upper-bounded and lower-bounded by some polynomial so that:
% \begin{align}
%     \label{asymptotic-stability-conditions}
%     \langle \nabla V(x), F(x) \rangle &\leq -W(x) \forall x \in \mathbb{R}^d
% \end{align}
% where
% \begin{align}
%     \label{w-polynomial-assumption}
%     C_{W,1} \norm{x-x^*}^b \leq W(x) &\leq C_{W,2} \norm{x-x^*}^b, a \leq b
% \end{align}
% Furthermore, we assume there our control Lyapunov function is $L$-smooth, that is:
% \begin{align}
%     \label{L-smooth-assumption}
%     V(y) \leq V(x) + \nabla V(x)^T(y-x) + \frac{L\norm{x-y}^2}{2} \forall x,y \in \mathbb{R}^d
% \end{align}
% As shown in the subsection \ref{ssec: smooth-exponential}, if the Lyapunov function $V$ is also polynomially bounded then we must have $V$ is upper-bounded by some quadratic function as well. Hence, assume that $V$ satisfies condition \eqref{assumption: polynomial-growth} with $a = 2$, we have that the conditions \eqref{asymptotic-stability-conditions}, \eqref{w-polynomial-assumption} combined with the condition \eqref{assumption: polynomial-growth} imply that $\langle \nabla V(x), F(x) \rangle \leq - \frac{C_{W,1}}{C_2} V(x)^{\frac{b}{2}}$. Thus, we can rewrite \eqref{asymptotic-stability-conditions} as the following assumption:
\label{ssec: smooth-sub-exponential}
Going beyond exponential stability, we consider a generalization of the smooth exponentially stable setting in Subsection \ref{ssec: smooth-exponential} where we assume a weaker stability condition as follows:
\begin{assumption}
\label{assumption: asymptotic-stability}
(Sub-exponential stability assumption of the Lyapunov function) Assume that the gradient of $V$ exists everywhere, we have the following holds:
 \begin{align}
    \label{eqn: asymptotic-stability-rewrite}
    \langle \nabla V(x), F(x) \rangle \leq -\gamma V(x)^c \,\, \forall x \in \R^d
\end{align}
where $c \geq 1, \gamma > 0$.
\end{assumption}
For $c = 1$, we have that this is the Assumption \ref{assumption: smooth-time-derivative}. In control theory, the Assumption \ref{assumption: asymptotic-stability} implies the asymptotic stability of the system \cite{khalil-book}, where the rate of the ODE is obtained using comparison functions, and with $c > 1$ we would obtain a weaker convergence (which we will call sub-exponential convergence) rather than an exponential convergence. The subgeometric drift condition in the form of \eqref{eqn: asymptotic-stability-rewrite} appears in many applications, one of which is subgeometric Markov chain mixing \cite{polynomial-convergence-markov-chain, Butkovsky_2014_subgeometric_wasserstein, durmus2015subgeometricratesconvergencewasserstein, qu-glynn2023wasserstein-contractive-drift}

In addition, we assume that the Lyapunov function has the gradient Lipschitz property (Assumption \ref{assumption: gradient-lipschitz}). In addition, we assume that the iterates of $\{x_k\}_{k \geq 0}$ is bounded:
\begin{assumption}
\label{assumption: bounded-iterates}
(Bounded iterates) There exists a positive constant $r$ such that $\norm{x_k} \leq r, \forall k \in \Z^+$.
\end{assumption}

The bounded iterate assumption is a common assumption in Stochastic Approximation literature \cite{Borkar2008StochasticAA}, \cite{bhandari2018finite}. Note that we don't require this assumption to obtain the convergence guarantees, but rather it is required here in order for Assumptions \ref{assumption: F-lipschitz}, \ref{assumption: asymptotic-stability} to hold concurrently. Now, to ensure Assumption \ref{assumption: bounded-iterates} holds for our updates, one can use some projection method to ensure that the iterates are in some ball radius $r$. In particular, we apply the following projection routine:
%--- old routine
% \begin{enumerate}
%     \item Choose $r > 6\norm{x^*}$, a sufficiently small step size $\alpha_k\, \forall k \in \Z^+$ and run the update step \eqref{eqn: control-framework-discrete}
%     \item If $\norm{x_k} > r$, project $x_k$ to the ball radius $r/4$.
% \end{enumerate}
%--- new rountine
\begin{enumerate}
    \item Choose $r > 8\br{\frac{C_2}{C_1}}^{\frac{1}{\alpha}}\norm{x^*}$, a sufficiently small step size $\alpha_k\, \forall k \in \Z^+$ and run the update step \eqref{eqn: control-framework-discrete}
    \item If $\norm{x_k+\alpha_k (F(x_k)+w_k)} > r$, project $x_k$ to the ball radius $\frac{1}{8}\br{\frac{C_1}{C_2}}^{\frac{1}{\alpha}}r$.
\end{enumerate}
Essentially, we perform the projection if there is a positive probability that the iterate $x_k$ might violate the condition $\norm{x_k} \leq r$. As we will show below, this only happens when $\norm{x_k}$ is already sufficiently large. This projection routine ensures Assumption \ref{assumption: bounded-iterates} and our analysis holds for all iterations $k$. For a more detailed analysis, we refer the readers to Appendix \ref{ssec:projection-justification}. From these Assumptions, we have:
\begin{theorem}
\label{finite-time-corollary-2}
Under Assumptions \ref{assumption: noise}, \ref{assumption: F-lipschitz}, \ref{assumption: gradient-lipschitz}, \ref{assumption: quadratic-growth}, \ref{assumption: asymptotic-stability}, \ref{assumption: bounded-iterates} and the step size $\alpha_k = \frac{\alpha}{(k+K)^\xi}$ where for $\xi = 1: K = \max\left\lbrace 1, \frac{\alpha (4C^2 + 8B)}{\gamma} \right\rbrace$ and for $\xi \in (0,1)$, $K = \max \left\lbrace 1, \left( \frac{\alpha (4C^2 + 8B)}{\gamma} \right)^{\frac{1}{\xi}}, \left( \frac{2 \xi}{\alpha \gamma} \right)^{\frac{1}{1-\xi}}\right\rbrace$  and $0 \leq \xi \leq \frac{c}{2c-1}$, we have the finite-time bound of the SA algorithm is:
\begin{align*}
    &\textbf{For $\xi = \frac{c}{2c-1}$}: \\
    &\mathbb{E}\sqbr{\norm{x_k-x^*}^2} \leq \frac{C_2}{C_1} \norm{x_0-x^*}^2 \left( \frac{K}{k+K} \right)^{\phi} +
    % detailed version
    % \begin{cases}
    %     \frac{2^d\alpha^2}{(k+K)^{\alpha^{2-\frac{1}{c}} A^{1-\frac{1}{c}}\gamma^{\frac{1}{c}}}} \frac{L (cA + 2B\norm{x^*}^2)}{\mu C_1 \left(\frac{1}{2c-1}-\alpha^{2-\frac{1}{c}} A^{1-\frac{1}{c}}\gamma^{\frac{1}{c}}\right)} &\text{ if } \alpha \in \left(0, \br{\frac{1}{(2c-1)A^{1-\frac{1}{c}}\gamma^{\frac{1}{c}}}}^{\frac{c}{2c-1}}\right) \\
    %     \frac{2^{\frac{2c}{2c-1}}\alpha^2 \log(k+K)}{(k+K)^{\frac{1}{2c-1}}} \frac{L (cA + 2B\norm{x^*}^2)}{\mu C_1} &\text{ if } \alpha =  \frac{1}{(2c-1)A^{1-\frac{1}{c}}\gamma^{\frac{1}{c}}}^{\frac{c}{2c-1}}\\
    %     \frac{e2^{\frac{2c}{2c-1}}\alpha^2}{(k+K)^{\frac{1}{2c-1}}} \frac{L (cA + 2B\norm{x^*}^2)}{\mu C_1 (\alpha^{2-\frac{1}{c}} A^{1-\frac{1}{c}}\gamma^{\frac{1}{c}} - \frac{1}{2c-1})} &\text{ if } \alpha \in \left( \br{\frac{1}{(2c-1)A^{1-\frac{1}{c}}\gamma^{\frac{1}{c}}}}^{\frac{c}{2c-1}}, \infty\right)
    % \end{cases}
    % big-O version
    \begin{cases}
        O\br{\frac{1}{k^{\phi}}} &\text{ if } \phi \in \left(0, \frac{1}{2c-1} \right) \\
        O\br{\frac{\log k}{k^{\frac{1}{2c-1}}}} &\text{ if } \phi = \frac{1}{2c-1} \\
        O\br{\frac{1}{k^{\frac{1}{2c-1}}}} &\text{ if } \phi \in \left(\frac{1}{2c-1}, \infty\right)
    \end{cases} \qquad\qquad\qquad\qquad \\
    &\textbf{For $\xi \in \left(0,\frac{c}{2c-1}\right)$}: \\
    &\mathbb{E}\sqbr{\norm{x_k-x^*}^2} \leq \frac{C_2}{C_1} \norm{x_0-x^*}^2 \exp\left[-\frac{\phi ((k+K)^{1-\frac{(2c-1)\xi}{c}}-K^{1-\frac{(2c-1)\xi}{c}})}{1-\frac{(2c-1)\xi}{c}} \right] +\frac{2\alpha^2}{C_1 \phi (k+K)^{\frac{\xi}{c}}} \qquad\qquad\qquad\qquad \\
    &\textbf{For $\xi = 0$}: \\
    &\mathbb{E}\sqbr{\norm{x_k-x^*}^2} \leq \frac{C_2}{C_1} \norm{x_0-x^*}^2 \left(1 - \phi\right)^k + \frac{L \alpha^2(cA + 2B\norm{x^*}^2)}{\mu C_1 \phi}
\end{align*}
where $\phi = \alpha^{2-1/c} A^{1-1/c} \gamma^{1/c}, \tau = \br{\frac{1}{(2c-1)A^{1-1/c}\gamma^{1/c}}}^{\frac{c}{2c-1}}$.
\end{theorem}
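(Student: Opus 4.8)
The plan is the standard stochastic-approximation route: turn the update into a one-step drift inequality for the continuous-system Lyapunov function $V$, reduce it to a deterministic scalar recursion for $y_k := \E[V(x_k)]$, and then analyze that recursion in each of the three $\xi$-regimes.

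For the drift step, I would apply the gradient-Lipschitz Assumption \ref{assumption: gradient-lipschitz} to $x_{k+1} = x_k + \alpha_k(F(x_k) + w_k)$ to obtain
\[
V(x_{k+1}) \le V(x_k) + \alpha_k \langle \nabla V(x_k), F(x_k) + w_k\rangle + \tfrac{L\alpha_k^2}{2}\norm{F(x_k)+w_k}_2^2 .
\]
Conditioning on $\cF_k$ and using unbiasedness of $w_k$ (Assumption \ref{assumption: noise}) removes the cross term; the sub-exponential stability Assumption \ref{assumption: asymptotic-stability} replaces $\alpha_k\langle \nabla V(x_k),F(x_k)\rangle$ by $-\gamma\alpha_k V(x_k)^c$; and the Lipschitz bound on $F$ with $F(x^*)=0$ (Assumption \ref{assumption: F-lipschitz}), the noise-variance bound (Assumption \ref{assumption: noise}), and boundedness of the iterates (Assumption \ref{assumption: bounded-iterates}) bound $\E[\norm{F(x_k)+w_k}_2^2 \mid \cF_k]$ by a constant $D$ depending only on $C,A,B,r,\norm{x^*}_2$. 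This gives $\E[V(x_{k+1})\mid\cF_k]\le V(x_k) - \gamma\alpha_k V(x_k)^c + \tfrac{LD}{2}\alpha_k^2$. Taking total expectations and applying Jensen's inequality to the convex map $t\mapsto t^c$ on $[0,\infty)$ — which, because the drift coefficient is negative, is used in the favorable direction $\E[V(x_k)^c]\ge y_k^c$ — produces the scalar recursion
\[
y_{k+1} \le y_k - \gamma\alpha_k y_k^c + \tfrac{LD}{2}\alpha_k^2, \qquad y_0 \le C_2\norm{x_0-x^*}_2^2 ,
\]
and Assumption \ref{assumption: quadratic-growth} gives $\E\norm{x_k-x^*}_2^2 \le y_k/C_1$, which is where the $C_2/C_1$ prefactor in all three cases comes from. (For $c=1$ this recursion is linear and recovers Theorems \ref{finite-time-corollary-0}--\ref{finite-time-corollary-1}; only $c>1$ is genuinely new.)

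The substance of the proof is the analysis of $y_{k+1}\le y_k - \gamma\alpha_k y_k^c + \tfrac{LD}{2}\alpha_k^2$ with $\alpha_k = \alpha/(k+K)^\xi$. I would separate the two competing mechanisms. The nonlinear drift $-\gamma\alpha_k y_k^c$ alone produces a fast transient decay from $y_0$; the noise term $\tfrac{LD}{2}\alpha_k^2$ pins $y_k$ to a floor of order $\alpha_k^{1/c}\sim (k+K)^{-\xi/c}$, obtained by balancing $\gamma\alpha_k y_k^c$ against $\alpha_k^2$, which is exactly the $O((k+K)^{-\xi/c})$ additive term in the statement. Linearizing the update near this floor, the per-step contraction factor is $1 - \Theta(\alpha_k^{(2c-1)/c})$, and the exponent $(2c-1)\xi/c$ of the resulting series crosses $1$ precisely at $\xi = c/(2c-1)$ — this is the origin of the three cases, of the rate constant $\phi = \alpha^{2-1/c}A^{1-1/c}\gamma^{1/c}$, and of the threshold $\tau$ (indeed $\phi\gtrless 1/(2c-1)$ iff $\alpha\gtrless\tau$). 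Concretely I would prove the bound by an inductive ansatz $y_k \le (\text{transient term}) + (\text{floor term})$: check it at $k=0$ and, in the inductive step, distinguish the case $y_k$ well above the floor (where $-\gamma\alpha_k y_k^c$ dominates and drives the transient down) from the case $y_k$ near the floor (where one shows the floor term reproduces itself up to the $O(\alpha_k^2)$ perturbation), using the conditions on $K$ to guarantee $\gamma\alpha_k y_k^{c-1}<1$ so there is no overshoot and $y_{k+1}\ge 0$ stays consistent. Summing the contraction exponents then yields: the geometric $(1-\phi)^k$ transient with a fixed floor when $\xi=0$; the stretched-exponential $\exp[-\tfrac{\phi}{1-(2c-1)\xi/c}((k+K)^{1-(2c-1)\xi/c}-K^{1-(2c-1)\xi/c})]$ when $\xi\in(0,c/(2c-1))$, since $\sum_j \alpha_j^{(2c-1)/c}\asymp (k+K)^{1-(2c-1)\xi/c}$; and the polynomial $(K/(k+K))^\phi$ transient when $\xi=c/(2c-1)$, where that sum is logarithmic, with the sub-cases $\alpha<\tau$, $\alpha=\tau$, $\alpha>\tau$ according to whether $\phi$ is below, at, or above the floor rate $1/(2c-1)$, and the critical case producing the $\log k / k^{1/(2c-1)}$ term.

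The step I expect to be the main obstacle is making this scalar-recursion analysis rigorous and sharp for $c>1$: one cannot unroll the recursion in closed form as in the linear case, so the inductive ansatz must be chosen so that both the transient and the noise-floor parts are self-reproducing, and the crossover between the drift-dominated and noise-dominated phases must be handled without degrading the exponents. Tracking the constants into the precise form stated (e.g.\ $A+2B\norm{x^*}_2^2$ in the additive terms) requires being more careful in the bound on $\E[\norm{F(x_k)+w_k}_2^2\mid\cF_k]$ — keeping the $\norm{x_k-x^*}_2^2$-dependence of the noise and folding the induced term, via $\norm{x_k-x^*}_2^2\le V(x_k)/C_1$, into the drift before the Jensen step — and the constraint $\xi\le c/(2c-1)$ is exactly what guarantees the transient never decays faster than the noise floor, so that the stated floor term is genuinely the dominant asymptotic term. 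The translation back to $\E\norm{x_k-x^*}_2^2$ and the corresponding almost-sure statement (cf.\ Corollary \ref{almost-sure-convergence-1}) are then routine.
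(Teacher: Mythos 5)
Your proposal follows the same skeleton as the paper's proof — gradient-Lipschitz expansion, unbiasedness to kill the cross term, the stability assumption for the drift, the variance bound folded into a multiplicative factor via $\norm{x_k-x^*}_2^2\le V(x_k)/C_1$, and then a scalar recursion $y_{k+1}\le y_k-\gamma\alpha_k y_k^c+O(\alpha_k^2)$ — and you correctly identify every exponent, the floor $\sim\alpha_k^{1/c}$, the rate $\phi$, and the threshold $\tau$. Where you diverge is the one step you flag as the main obstacle: you propose to attack the nonlinear recursion directly by a two-phase inductive ansatz (drift-dominated vs.\ noise-dominated), whereas the paper dispatches it with a single weighted AM--GM inequality,
\begin{align*}
(c-1)\alpha_k^2 A + \alpha_k\gamma V(x_k)^c \;\ge\; c\,\alpha_k^{2-\frac{1}{c}}A^{1-\frac{1}{c}}\gamma^{\frac{1}{c}}\,V(x_k),
\end{align*}
which trades a $c$-fold inflation of the additive noise (this is where the $cA$ in the constants comes from) for a \emph{linear} contraction at rate $\alpha_k^{2-1/c}=\alpha_k^{(2c-1)/c}$. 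This is exactly your "linearization near the floor" made rigorous in one line — it is simultaneously sharp in both of your phases because when $V$ is large the left side is dominated by the true nonlinear drift and when $V$ is small the added $(c-1)\alpha_k^2A$ is of the same order as the noise already present — and after it the recursion is linear, so the product-plus-sum unrolling and the $T_1,T_2$ integral/induction bounds from the exponential case ($c=1$) apply verbatim, with $\alpha_k$ replaced by $\alpha_k^{2-1/c}$ in the contraction factor. Your route would likely also work, but the self-reproducing two-term ansatz you would need is precisely the delicate part you leave unexecuted, and it buys nothing over the AM--GM reduction; the only other (minor) difference is that you take expectations first and invoke Jensen on $t\mapsto t^c$, while the paper applies the AM--GM pathwise so that no Jensen step is needed.
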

%To prove this corollary, we can simply substitute $\beta_k = \alpha_k^{2-\frac{1}{c}}$ which gives $\beta_k^{\frac{2c}{2c-1}} = \alpha_k^2$ and analyze similarly to corollary \ref{finite-time-corollary-1}. 
This result suggests that we will have weaker convergence rates for larger $c$ values. Furthermore, notice that our analysis matches with \ref{finite-time-corollary-1} for $c = 1$. We leave detailed proof for this result in Appendix \ref{sssec:finite-time-corollary-2-proof}. In addition, we also obtain the almost sure convergence below for which we leave the proof in Appendix \ref{sssec:almost-sure-convergence-2-proof} and its proof outline to Subsection \ref{ssec: proof-outline-almost-sure}.
\begin{corollary}
    \label{almost-sure-convergence-2}
    Let $\{x_k\}_{k \geq 0}$ be the sequence of iterates generated by the update rule \eqref{eqn: control-framework-discrete}, then when the stepsize sequence $\{\alpha_k\}_{k \geq 0}$ satisfies $\sum_{k = 1}^{\infty}\alpha_k^{2-\frac{1}{c}} = +\infty$ and $\sum_{k = 1}^{\infty}\alpha_k^2 < +\infty$, we have $x_k$ converges to $x^*$
    % \begin{align}
    %     \lim_{k \rightarrow \infty} \operatorname{dist}(x_k,\mathcal{X}) = 0
    % \end{align}
    almost surely.
\end{corollary}

\subsection{Stochastic control of non-smooth sub-exponentially stable systems}
\label{ssec: non-smooth-subexponential}
In this section, we will complement the results of the previous section by considering general non-smooth systems that satisfy sub-exponential stability. Since the smoothness is absent, we define an analogous assumption to Assumption \ref{assumption: asymptotic-stability} using the notion of the Clarke generalized gradient defined in Subsection \ref{ssec: non-smooth exponential} as follows:
\begin{assumption}
\label{assumption: clarke-asymptotic-stability}
(Sub-exponential stability assumption of the Lyapunov function with the Clarke generalized gradient) Assume that the gradient of $V$ exists everywhere, we have that:
 \begin{align}
    %\label{eqn: smooth-time-derivative}
    \langle g_x, F(x) \rangle \leq -\gamma V(x)^c \,\, \forall x \in \mathbb{R}^d, g_x \in \partial V(x) \forall x \in \R^d
\end{align}
for some constant $\gamma > 0$ and $c \geq 1$.
\end{assumption}
Now, consider $R = V^{\frac{2}{a}}$ and let $M_\mu = R \square \frac{\norm{x}^2}{2\mu}$, we obtain the following bound on the time-derivative of the Moreau envelope:
\begin{lemma}
    \label{lemma: moreau-negative-drift-2}
    Under Assumptions \ref{assumption: noise}, \ref{assumption: polynomial-growth}, \ref{assumption: generalized-gradient-bound} and \ref{assumption: clarke-asymptotic-stability} and let $R = V^{\frac{2}{a}}$ and $M_{\mu} = R \square \frac{\norm{x}^2}{2\mu}$, we have:
    \begin{align}
        \langle \nabla M_{\mu}(x), F(x) \rangle \leq -\frac{2\gamma\left(\frac{C_1}{C_2}\right)^{c-1+\frac{2}{a}}}{a\left(1 + \frac{2\mu G_2 C_{1,a}^{\frac{2}{a}-1}}{a}\right)^2} R(x)^{\frac{a(c-1)}{2}+1} + \frac{2 \mu C G_2 C_{2,a}^{\frac{2}{a}-1}}{a} R(x) \forall x \in \mathbb{R}^d
    \end{align}
\end{lemma}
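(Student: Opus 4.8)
The plan is to bound the inner product $\langle \nabla M_\mu(x), F(x)\rangle$ by relating $\nabla M_\mu(x)$ to a generalized gradient of $R$ at the proximal point $u^* = \operatorname{prox}_{\mu R}(x)$, then invoking Assumption~\ref{assumption: clarke-asymptotic-stability} there, and finally controlling the error between $x$ and $u^*$ using the Lipschitz property of $F$ (Assumption~\ref{assumption: F-lipschitz}) together with the polynomial growth bounds (Assumptions~\ref{assumption: polynomial-growth}, \ref{assumption: generalized-gradient-bound}). Concretely, standard Moreau-envelope calculus gives $\nabla M_\mu(x) = \tfrac{1}{\mu}(x - u^*)$ and the optimality condition $\tfrac{1}{\mu}(x-u^*) \in \partial R(u^*)$, so $\nabla M_\mu(x) = g_{u^*}$ for some $g_{u^*} \in \partial R(u^*)$. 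Writing $\langle \nabla M_\mu(x), F(x)\rangle = \langle g_{u^*}, F(u^*)\rangle + \langle g_{u^*}, F(x) - F(u^*)\rangle$, the first term is the ``good'' drift term and the second is the error term; the Lipschitz assumption bounds $\|F(x)-F(u^*)\| \le C\|x - u^*\| = C\mu\|\nabla M_\mu(x)\| = C\mu\|g_{u^*}\|$, so by Cauchy--Schwarz the error term is at most $C\mu\|g_{u^*}\|^2$.

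For the good drift term, I would first translate the chain of assumptions on $V$ into the corresponding statements for $R = V^{2/a}$: the growth bound $C_{1,a}^{2/a}\|x-x^*\|_2^2 \le R(x) \le C_{2,a}^{2/a}\|x-x^*\|_2^2$ follows by raising Assumption~\ref{assumption: polynomial-growth} to the power $2/a$; the generalized-gradient bound becomes $\|g_x\| \le G_2 \|x-x^*\|$ for $g_x \in \partial R(x)$ via the chain rule $\partial R = \tfrac{2}{a}V^{2/a-1}\partial V$ and Assumption~\ref{assumption: generalized-gradient-bound}, giving $G_2 = \tfrac{2}{a}G$ times the appropriate power of the growth constants; and the stability condition becomes $\langle g_x, F(x)\rangle \le -\tfrac{2\gamma}{a}V(x)^{c-1+2/a}$, which I would then convert into a bound in terms of $R(x)$ using the two-sided growth comparison, producing the factor $(C_1/C_2)^{c-1+2/a}$ and the exponent $\tfrac{a(c-1)}{2}+1$. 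Applying this at $u^*$ yields $\langle g_{u^*}, F(u^*)\rangle \le -\tfrac{2\gamma}{a}(C_1/C_2)^{c-1+2/a} R(u^*)^{a(c-1)/2+1}$.

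The main obstacle — and the source of the somewhat intricate constants in the statement — is passing between quantities evaluated at $u^*$ and at $x$. I need: (i) a lower bound on $R(u^*)$ in terms of $R(x)$, since the negative drift is naturally stated at $u^*$; and (ii) an upper bound on $\|g_{u^*}\|$ in terms of $R(x)$ for the error term. For (ii), $\|g_{u^*}\| \le G_2\|u^* - x^*\|$; I would bound $\|u^* - x^*\| \le \|x - x^*\|$ using a non-expansiveness / descent argument for the proximal map (the proximal point does not move farther from the minimizer $x^*$ than $x$ is), which gives $\|g_{u^*}\| \le G_2 C_{2,a}^{2/a-1}\cdot(\text{power of }\|x-x^*\|)$ and hence a bound by $R(x)$ up to the stated constant $\tfrac{2\mu C G_2 C_{2,a}^{2/a-1}}{a}$. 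For (i), I would use $\tfrac{1}{\mu}\|x-u^*\| = \|g_{u^*}\| \le G_2 C_{1,a}^{2/a-1}\cdot(\dots)$ to show $\|x - u^*\|$ is small relative to $\|x - x^*\|$, so that $\|u^* - x^*\| \ge \big(1 + \tfrac{2\mu G_2 C_{1,a}^{2/a-1}}{a}\big)^{-1}\|x - x^*\|$ (the quadratic-in-$\mu$ denominator in the lemma comes from squaring this, since the drift term involves $R(u^*) \sim \|u^*-x^*\|^2$), yielding the claimed coefficient. Assembling the good term, the error term, and these transfer estimates gives exactly the asserted inequality; the delicate bookkeeping is entirely in tracking which growth constant ($C_{1,a}$ vs.\ $C_{2,a}$) applies on which side of each comparison.
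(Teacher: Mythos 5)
Your proposal follows essentially the same route as the paper's proof: the same decomposition $\langle \nabla M_\mu(x),F(x)\rangle = \langle g_{u},F(u)\rangle + \langle g_{u},F(x)-F(u)\rangle$ at $u=\operatorname{prox}_{\mu R}(x)$, the same chain-rule transfer of the drift and gradient-growth assumptions from $V$ to $R=V^{2/a}$, the same Cauchy--Schwarz/Lipschitz bound on the error term, and the same two transfer estimates (non-expansiveness of the prox for the upper bound on $\|u-x^*\|$, and $\|u-x^*\|\ge \|x-x^*\|/(1+\mu G_M)$ for the lower bound). The only caveat is the same constant-bookkeeping looseness already present in the paper (e.g.\ the power of the gradient-growth constant in the error term and the power of $(1+\mu G_M)$ when the drift exponent exceeds one), so no substantive gap relative to the paper's own argument.
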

Notice that we don't have a negative semi-definite RHS as in the non-smooth exponentially stable case. Thus, naively applying the Moreau envelope here will not give convergence even for diminishing step sizes. Indeed, with constant $\mu$, there is an additional positive term $\frac{2\mu CG}{a}R(x)$ which cannot be canceled out by the other non-negative term for sufficiently small $x$, meaning that we would not be able to obtain convergence for arbitrary accuracy. \\
% This motivates us to modify the Moreau envelope by having the parameter $\mu$ to be adaptive so that the additional non-negative term will vanish to $0$ as we slowly drive $\mu \rightarrow 0$. Armed with these insights, we have the following one-iterate bound:
% \begin{lemma}
% \label{prop: one-iterate-bound-adaptive-mu}
% Let $M_{\mu} = R \square \frac{\norm{x}^2}{2\mu}$ and $M_{\mu} = R \square \frac{\norm{x}^2}{2\mu}$, there exists constants $\nu_1, \nu_2, \nu_3, \nu_4, A^*, B^* > 0$ such that for all $k \in \mathbb{Z}^+$:
% \begin{align}
%     \nonumber
%     \mathbb{E}[M_{\mu_{k+1}}(x_{k+1}) | \mathcal{F}_k] &\leq \left(1 + \alpha_k \mu_k \nu_1 + (\mu_k-\mu_{k+1})\nu_2 + \frac{\alpha_k^2\nu_3}{\mu_k} \right)M_{\mu_k}(x_k) \\
%     &- \alpha_k \nu_4 M_{\mu_k}(x_k)^c + \frac{\alpha_k^2}{\mu_k}(A^*+B^*\norm{x^*}^2)
% \end{align}
% \end{lemma}
% For more details on the constants and the proofs of these lemmas, please refer to subsection \ref{sssec:finite-time-corollary-3-proof} in the Appendix. If we choose $E_{k+1} = \E[M_{\mu_{k+1}}(x_{k+1})|\mathcal{F}_k]$ as our potential function then the one-iterate bound \ref{prop: one-iterate-bound-adaptive-mu} is similar to \ref{one-iterate-bound-smooth} which we can follow a similar proof strategy to obtain finite-time convergence.
\\
Thus, this motivates us to instead select $M_{\mu_k}(x_k)$ as our Lyapunov function, which has a time-varying smooth parameter $\mu_k$ so that we can drive $\mu \rightarrow 0$ to obtain convergence. However, it must be noted that a careful choice of $\mu_k$ is required in order to achieve competitive rates. Choosing a $\mu_k$ that converges to $0$ too slowly would harm the rate at which $E_k$ converges while if $\mu_k$ converges too fast then the noise term would grow too quickly and it would harm the convergence rate regardless. With a specific choice of the parameter pairs $(\alpha_k, \mu_k)$, we obtain the finite-time bounds for non-smooth sub-exponentially stable systems as follows:
\begin{theorem}
    \label{finite-time-corollary-3}
    Under Assumptions \ref{assumption: noise},  \ref{assumption: F-lipschitz}, \ref{assumption: polynomial-growth}, \ref{assumption: generalized-gradient-bound}, \ref{assumption: bounded-iterates} and \ref{assumption: clarke-asymptotic-stability} and with the step size $\alpha_k = \frac{\alpha}{(k+K)^{\xi}}$ and the choice of parameter $\mu_k = \frac{\mu}{(k+K)^{0.5\xi}}$ where $d_{a,c} = a(c-1)/2+1, \xi \leq \frac{2d}{3}, \Phi = 0.5 d_{a,c} \alpha^{2-\frac{1}{d_{a,c}}} \nu_4^{\frac{1}{d_{a,c}}} \left(\frac{\br{\mu_0^2 G_M^2 + 1} N_C}{2\mu}\right)^{1-\frac{1}{d_{a,c}}}, \omega_* = \frac{d_{a,c} \alpha^2 \br{\mu_0^2 G_M^2 + 1} N_C}{2\mu}, d = \frac{3d_{a,c}}{3d_{a,c}-1}, c \geq 1$, we have:
    \begin{flalign*}
    %M_{\mu_0}(x_0) \leq C_{2,a}^{\frac{2}{a}}\norm{x_0-x^*}^2
    &\textbf{For $\xi = \frac{2d}{3}$}: \\
    &E_k \leq E_0\br{\frac{C_{2,a}^{\frac{2}{a}}}{C_{1,a}^{\frac{2}{a}}} + 2\mu_0 C_{2,a}^{\frac{2}{a}}} \left(\frac{K}{k+K}\right)^{\Phi} + \begin{cases}
        O\br{\frac{1}{k^{\Phi}}} &\text{ if } \Phi \in (0,d-1)\\
        O\br{\frac{\log k}{k^{d-1}}} &\text{ if } \Phi = d-1 \\
        O\br{\frac{1}{k^{d-1}}} &\text{ if } \Phi > d-1
    \end{cases}\\
    &\textbf{For $\xi \in \left(0, \frac{2d}{3}\right)$}: \\
    &E_k \leq E_0\br{\frac{C_{2,a}^{\frac{2}{a}}}{C_{1,a}^{\frac{2}{a}}} + 2\mu_0 C_{2,a}^{\frac{2}{a}}} \exp{\left[ -\frac{\Phi((k+K)^{1-\frac{(3d_{a,c}-1)\xi}{2d_{a,c}}}-K^{1-\frac{(3d_{a,c}-1)\xi}{2d_{a,c}}}))}{1-\frac{(3d_{a,c}-1)\xi}{2d_{a,c}}} \right]} + \frac{2\alpha^{1.5} \omega_* \br{\frac{1}{C_{1,a}^{\frac{2}{a}}} + 2\mu_0}}{\Phi (k+K)^{\frac{\xi}{2d_{a,c}}}} \qquad\qquad\qquad \\
    &\textbf{For $\xi = 0$}: \\
    &E_k \leq E_0\br{\frac{C_{2,a}^{\frac{2}{a}}}{C_{1,a}^{\frac{2}{a}}} + 2\mu_0 C_{2,a}^{\frac{2}{a}}}(1-\Phi)^k + 
    \frac{\omega_*\br{\frac{1}{C_{1,a}^{\frac{2}{a}}} + 2\mu_0}}{\Phi}
    \end{flalign*}
    Here, we denote $E_k = \mathbb{E}\sqbr{\norm{x_k-x^*}^2} \forall k \in \Z^+$ and $E_0 = \norm{x_0-x^*}^2$.
\end{theorem}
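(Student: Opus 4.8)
The plan is to mimic the structure of the non-smooth exponentially stable proof (Theorem \ref{finite-time-corollary-1}), but with the time-varying Moreau envelope $M_{\mu_k}$ playing the role of the fixed envelope, and to carefully track the extra error terms that the variation of $\mu_k$ introduces. The target quantity is $E_k = \E\sqbr{\norm{x_k-x^*}_2^2}$, which by Assumption \ref{assumption: polynomial-growth} is sandwiched between constant multiples of $R(x_k) = V(x_k)^{2/a}$, and $R$ is in turn sandwiched between $M_{\mu_k}(x_k)$ and $M_{\mu_k}(x_k)$ plus a $\mu_k$-dependent slack (standard Moreau-envelope inequalities, with the $\mu_0 G_M^2$ factors appearing because the gradient of $R$ grows). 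So it suffices to set up and solve a one-step recursion for $e_k := \E\sqbr{M_{\mu_k}(x_k)}$.

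The first step is a one-step expansion of $M_{\mu_{k+1}}(x_{k+1})$. I would split this into two pieces: (i) the change $M_{\mu_{k+1}}(x_{k+1}) - M_{\mu_k}(x_{k+1})$ coming purely from decreasing the smoothing parameter, which is monotone (decreasing $\mu$ decreases the envelope) and can be bounded using the fact that $\partial M_\mu / \partial \mu \le 0$ together with the growth of $R$, contributing a controlled term involving $\mu_k - \mu_{k+1}$; and (ii) the change $M_{\mu_k}(x_{k+1}) - M_{\mu_k}(x_k)$ for the \emph{fixed} parameter $\mu_k$, which is handled exactly as in the smooth-envelope case: since $M_{\mu_k}$ has $1/\mu_k$-Lipschitz gradient, a descent-lemma expansion gives $\langle \nabla M_{\mu_k}(x_k), \alpha_k(F(x_k)+w_k)\rangle + \tfrac{\alpha_k^2}{2\mu_k}\norm{F(x_k)+w_k}_2^2$. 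Taking conditional expectation kills the cross term with $w_k$, the drift term $\langle \nabla M_{\mu_k}(x_k), F(x_k)\rangle$ is controlled by Lemma \ref{lemma: moreau-negative-drift-2}, and the noise/Lipschitz terms are bounded via Assumptions \ref{assumption: noise} and \ref{assumption: F-lipschitz} together with bounded iterates (Assumption \ref{assumption: bounded-iterates}). The outcome is a recursion of the shape
\begin{align*}
e_{k+1} \le e_k - c_1 \alpha_k\, e_k^{\,d_{a,c}} + c_2 \frac{\alpha_k^2}{\mu_k} + c_3 \mu_k \alpha_k + c_4 (\mu_k - \mu_{k+1}),
\end{align*}
where the negative drift is now \emph{sublinear} in $e_k$ (exponent $d_{a,c} = a(c-1)/2+1 \ge 1$) because of the $R^{d_{a,c}}$ term in Lemma \ref{lemma: moreau-negative-drift-2}, and the troublesome $+c_3\mu_k\alpha_k$ term — the one flagged in the discussion after Lemma \ref{lemma: moreau-negative-drift-2} as not cancellable for fixed $\mu$ — now vanishes as $k\to\infty$ because $\mu_k\to 0$.

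The second step is to solve this recursion. The choice $\alpha_k = \alpha/(k+K)^\xi$, $\mu_k = \mu/(k+K)^{0.5\xi}$ is made precisely so that $\alpha_k^2/\mu_k \asymp \alpha^{1.5}\mu^{-1}(k+K)^{-1.5\xi}$ and $\mu_k\alpha_k \asymp (k+K)^{-1.5\xi}$ balance, and $\mu_k - \mu_{k+1} = O((k+K)^{-1.5\xi - 1})$ is lower order; so the whole error feed is $\Theta((k+K)^{-1.5\xi})$. I would then linearize the sublinear drift: writing the bound in terms of $R$ rather than $e_k$ and using $R^{d_{a,c}} = R \cdot R^{d_{a,c}-1}$ together with a lower bound on $R$ of the form $R \gtrsim e_k / (\text{const} + \mu_k)$, one converts $-c_1\alpha_k e_k^{d_{a,c}}$ into an effective linear drift $-\phi_k e_k$ with $\phi_k \asymp \alpha_k e_k^{d_{a,c}-1}$; making this rigorous requires the kind of self-improving/bootstrapping argument where one first gets a crude polynomial rate, plugs it back in, and stabilizes — this is exactly where the exponent $d = 3d_{a,c}/(3d_{a,c}-1)$ and the threshold $\xi = 2d/3$ come from, and it mirrors how $\phi$ and $\tau$ arose in Theorem \ref{finite-time-corollary-2}. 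Once the recursion is genuinely linear, $e_{k+1}\le (1-\phi_k)e_k + O((k+K)^{-1.5\xi})$, the three regimes follow from standard step-size summation lemmas: $\xi = 2d/3$ gives the transient $(K/(k+K))^\phi$ plus a $1/k^{\min\{\phi,\,d-1\}}$ (with the log at equality), $\xi \in (0, 2c/(3d_{a,c}-1))$ gives stretched-exponential transient decay plus a $(k+K)^{-\xi/(2d_{a,c})}$ floor, and $\xi = 0$ gives geometric decay to a fixed ball of radius $\asymp \omega/\phi$. Finally, translating $e_k$ back to $E_k$ via the Moreau sandwich and Assumption \ref{assumption: polynomial-growth} produces the stated constants $\br{C_{2,a}^{2/a}/C_{1,a}^{2/a} + 2\mu_0 C_{2,a}^{2/a}}$ on the transient term.

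The main obstacle is the linearization of the sublinear drift in the presence of a moving smoothing parameter. For fixed $\mu$ and $c=1$ this reduces to the clean contraction $e_{k+1}\le(1-\phi_k)e_k + \text{noise}$; here, both the fact that the drift is $e_k^{d_{a,c}}$ with $d_{a,c}>1$ and the fact that the conversion factor between $e_k$ and $R(x_k)$ itself drifts (through $\mu_k$) mean that the "effective" contraction coefficient $\phi_k$ depends on the current iterate in a way one must control uniformly. The bootstrapping has to be done carefully enough that the induction closes for all $k$, and the interaction between the rate at which $\mu_k\to 0$ and the exponent $d_{a,c}$ is what forces the (likely suboptimal, as the authors note) $3d_{a,c}-1$ in the final complexity. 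Secondary technical points — verifying the differentiability and Lipschitz-gradient properties of $M_{\mu_k}$ uniformly in $k$, and bounding $\partial_\mu M_\mu$ via the growth of $R$ from Assumptions \ref{assumption: polynomial-growth}–\ref{assumption: generalized-gradient-bound} — are routine given Lemma \ref{lemma: moreau-negative-drift-2} and the bounded-iterates assumption.
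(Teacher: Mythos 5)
Your overall architecture matches the paper's: build the one-step recursion for $\E[M_{\mu_k}(x_k)]$ by combining the $1/\mu_k$-descent lemma, the drift bound of Lemma \ref{lemma: moreau-negative-drift-2}, the noise assumptions, and a separate bound on $M_{\mu_{k+1}}(x_{k+1})-M_{\mu_k}(x_{k+1})$ via $\partial_\mu M_\mu = -\norm{\nabla M_\mu}^2$ and the gradient growth; then balance $\alpha_k \approx \mu_k^2$ so that $\alpha_k\mu_k$, $\alpha_k^2/\mu_k$ are both $\Theta((k+K)^{-1.5\xi})$ and $\mu_k-\mu_{k+1}$ is lower order; then solve the linearized recursion in three step-size regimes and translate back to $E_k$ through the Moreau sandwich. (One small slip: decreasing $\mu$ \emph{increases} the envelope, since $M_\mu \le R$ and $M_\mu \uparrow R$ as $\mu \downarrow 0$; that is precisely why the $(\mu_k-\mu_{k+1})$ term is a cost that must be absorbed rather than a help.)

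The genuine gap is at the step you yourself flag as "the main obstacle": linearizing the superlinear drift $-\alpha_k \nu_4 M_{\mu_k}(x_k)^{d_{a,c}}$. You propose an effective contraction $\phi_k \asymp \alpha_k e_k^{d_{a,c}-1}$, i.e.\ an \emph{iterate-dependent} coefficient, and say that making this rigorous requires a bootstrapping/self-improvement argument; you then attribute the exponents $d$ and the threshold $\xi = 2d/3$ to that bootstrap. The paper does something much simpler and entirely different: a weighted AM--GM between the drift term and $(d_{a,c}-1)$ copies of the noise term,
\begin{align*}
(d_{a,c}-1)\,\frac{\alpha_k^2}{\mu_k}\,\frac{(\mu_0^2G_M^2+1)N_C}{2} \;+\; \alpha_k\nu_4\, M_{\mu_k}(x_k)^{d_{a,c}} \;\ge\; d_{a,c}\,\alpha_k^{2-\frac{1}{d_{a,c}}}\,\nu_4^{\frac{1}{d_{a,c}}}\Bigl(\tfrac{(\mu_0^2G_M^2+1)N_C}{2\mu}\Bigr)^{1-\frac{1}{d_{a,c}}} M_{\mu_k}(x_k),
\end{align*}
which produces a contraction coefficient depending only on the step size and the noise floor, not on $e_k$, at the price of inflating the variance term by a factor $d_{a,c}$. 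This is the same device already used in Proposition \ref{main-theorem-2} for the smooth subexponential case, and it is exactly where $\phi \asymp \alpha^{2-1/d_{a,c}}$, the exponent $\left(\tfrac{3}{2}-\tfrac{1}{2d_{a,c}}\right)\xi = \tfrac{(3d_{a,c}-1)\xi}{2d_{a,c}}$, and hence $d = \tfrac{3d_{a,c}}{3d_{a,c}-1}$ and the threshold $\xi = \tfrac{2d}{3}$ come from. Without this (or an actually executed bootstrap, which you do not supply), your recursion is not linear and the standard summation lemmas you invoke in the final step do not apply; so as written the proposal does not close.
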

We defer the detailed proof of this result and its prerequisite lemmas to Appendix \ref{sssec:finite-time-corollary-3-proof}. Similarly to other cases, we can 
%\eqref{prop: one-iterate-bound-adaptive-mu} 
obtain the almost sure convergence result via the Supermartingale Convergence Theorem. We defer the proof of the following corollary to Appendix \ref{sssec:almost-sure-convergence-3-proof} and its proof outline to Subsection \ref{ssec: proof-outline-almost-sure}:
\begin{corollary}
    \label{almost-sure-convergence-3}
    Let $\{x_k\}_{k \geq 0}$ be the sequence of iterates generated by the update rule \eqref{eqn: control-framework-discrete}, then when the stepsize sequence $\{\alpha_k\}_{k \geq 0}, \{\mu_k\}_{k \geq 0}$ satisfies $\alpha_k = \mu_k^2 = \frac{\alpha}{(k+K)^{\xi}}$ where $\xi \in \left(\frac{2}{3}, \frac{2d}{3} \right], c \geq 1$, we have $x_k$ converges to $x^*$
    % \begin{align}
    %     \lim_{k \rightarrow \infty} \operatorname{dist}(x_k,\mathcal{X}) = 0
    % \end{align}
    almost surely. %$\sum_{k = 1}^{\infty}\alpha_k^{\frac{3}{2}-\frac{1}{2c}} = +\infty$ and $\sum_{k = 1}^{\infty}\alpha_k^{\frac{3}{2}} < +\infty$, we have $\lim_{k \rightarrow \infty} dist(x_k,\mathcal{X}) = 0$ almost surely.
\end{corollary}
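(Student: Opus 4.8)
The plan is to run the classical Stochastic Approximation almost-sure argument with the \emph{time-varying} Moreau envelope $M_{\mu_k}(x_k)$ (rather than a fixed potential) as the Lyapunov function, and to feed the resulting recursion into the Robbins--Siegmund supermartingale convergence theorem. Concretely, I would revisit the one-step estimate that drives the proof of Theorem \ref{finite-time-corollary-3} and, \emph{before} the step sizes are specialized to extract explicit rates, record it in the form
\[
\E\!\left[M_{\mu_{k+1}}(x_{k+1}) \mid \mathcal{F}_k\right] \le (1+\beta_k)\,M_{\mu_k}(x_k) - \delta_k\,R(x_k)^{d_{a,c}} + \epsilon_k ,
\]
where $R = V^{2/a}$ and all quantities are nonnegative and $\mathcal{F}_k$-measurable. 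The descent term is supplied by Lemma \ref{lemma: moreau-negative-drift-2}, with $\delta_k \asymp \alpha_k$ because its leading coefficient stays bounded away from $0$ as $\mu_k\downarrow 0$; $\epsilon_k$ collects the additive noise floor created by the $1/\mu_k$-smoothness of the Moreau envelope (Assumption \ref{assumption: noise}) together with the subordinate $+\,\tfrac{2\mu_k C G_2 C_{2,a}^{2/a-1}}{a} R(x_k)$ term of Lemma \ref{lemma: moreau-negative-drift-2}, so that $\epsilon_k \asymp \alpha_k^2/\mu_k + \alpha_k\mu_k$; and $\beta_k$ bundles the multiplicative effect of the noise variance with the increase of the envelope caused by shrinking the smoothing parameter, which (using $\norm{x-\operatorname{prox}_{\mu_k R}(x)}_2^2 \le 2\mu_k M_{\mu_k}(x)$ and $\alpha_k = \mu_k^2 = \alpha/(k+K)^\xi$) obeys $M_{\mu_{k+1}}(x) - M_{\mu_k}(x) \le \tfrac{\xi}{2(k+K)} M_{\mu_k}(x)$, so $\beta_k = O\!\big(\alpha_k^{3/2} + 1/(k+K)\big)$.

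Next I would check that the prescribed $(\alpha_k,\mu_k)$ makes this recursion usable. Since $\alpha_k = \mu_k^2$, we get $\epsilon_k = O(\alpha_k^{3/2}) = O\!\big((k+K)^{-3\xi/2}\big)$, which is summable \emph{precisely} because $\xi > 2/3$ --- this is what pins down the left endpoint of the admissible interval. The term $\beta_k$, however, contains the $\Theta\!\big(1/(k+K)\big)$ piece from $\mu_k\downarrow 0$, which is not summable, so Robbins--Siegmund cannot be applied as is. The remedy is to absorb this piece into the descent: by Assumption \ref{assumption: polynomial-growth}, $M_{\mu_k}(x_k) \le R(x_k) \le C_{2,a}^{2/a}\norm{x_k-x^*}_2^2$ and $R(x_k) \ge C_{1,a}^{2/a}\norm{x_k-x^*}_2^2$, hence $R(x_k) \ge (C_{1,a}/C_{2,a})^{2/a} M_{\mu_k}(x_k)$; moreover $R(x_k)$ is bounded (Assumption \ref{assumption: bounded-iterates}). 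For $d_{a,c}=1$ this immediately gives $\delta_k R(x_k)^{d_{a,c}} \ge c\,\alpha_k M_{\mu_k}(x_k) \gg \tfrac{\xi}{2(k+K)}M_{\mu_k}(x_k)$ for all large $k$ (as $\xi<1$), so the $\mu$-increase is swallowed and the recursion becomes a genuine eventual supermartingale inequality $\E[M_{\mu_{k+1}}(x_{k+1})\mid\mathcal{F}_k]\le (1-c\alpha_k)M_{\mu_k}(x_k) + \widetilde\epsilon_k$ with $\sum_k\widetilde\epsilon_k<\infty$; for $d_{a,c}>1$ the descent is superlinear and the absorption must be carried out on the region where $M_{\mu_k}(x_k)$ exceeds a polynomially-decaying threshold, complemented by a windowed argument (or, equivalently, by working with the reweighted process $M_{\mu_k}(x_k)/\prod_{j<k}(1+\beta_j)$ and using boundedness of the iterates to rule out a nonzero limit) to control the excursions where it does not --- and the upper restriction $\xi\le 2d/3 = 2d_{a,c}/(3d_{a,c}-1)$ is exactly the regime in which the descent analysis of Theorem \ref{finite-time-corollary-3} remains valid.

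With a clean supermartingale inequality in hand, Robbins--Siegmund yields that $M_{\mu_k}(x_k)$ converges almost surely to a finite limit and that the summed descent is finite a.s.; since the descent weights are $\asymp\alpha_k$ with $\sum_k\alpha_k=\infty$ (because $\xi\le 1$), the only possible a.s. limit is $0$, i.e.\ $M_{\mu_k}(x_k)\to 0$ a.s. Finally, combining $M_{\mu_k}(x) \ge \tfrac{C_{1,a}^{2/a}}{1 + 2\mu_k C_{1,a}^{2/a}}\norm{x-x^*}_2^2$ (the sandwich already used in the proof of Theorem \ref{finite-time-corollary-3}) with $\mu_k\to 0$ shows that $M_{\mu_k}(x_k)\to 0$ forces $\norm{x_k-x^*}_2\to 0$, hence $\operatorname{dist}(x_k,\mathcal{X})\to 0$ almost surely.

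\textbf{Main obstacle.} In contrast to Corollaries \ref{almost-sure-convergence-1} and \ref{almost-sure-convergence-2}, where the Lyapunov function uses a \emph{fixed} $\mu$ and plugs straight into the supermartingale framework, here driving $\mu_k\downarrow 0$ --- which is \emph{necessary} by Lemma \ref{lemma: moreau-negative-drift-2} --- makes $M_{\mu_k}(x_k)$ carry an unsummable $\Theta(1/k)$ multiplicative drift, and the heart of the proof is showing that the $R(x_k)^{d_{a,c}}$-descent dominates this drift uniformly for all $\xi\in(2/3,2d/3]$, especially in the superlinear regime $d_{a,c}>1$; getting explicit constants through this step is where essentially all the work lies.
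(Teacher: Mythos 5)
Your overall strategy is the paper's: take the time-varying envelope $M_{\mu_k}(x_k)$ as the potential, feed the one-iterate bound of Proposition \ref{prop: one-iterate-bound-adaptive-mu} into the Supermartingale Convergence Theorem, identify $\xi>\tfrac{2}{3}$ as the threshold making the additive noise $\asymp\alpha_k^{3/2}$ summable, use divergence of the linear descent weights to force the limit to be zero, and transfer back to $\norm{x_k-x^*}_2$ via the sandwich of Lemma \ref{lemma: envelope-properties}. However, the "main obstacle" you identify --- an unsummable $\Theta\!\big(1/(k+K)\big)$ multiplicative drift coming from shrinking $\mu_k$ --- is an artifact of the bound you chose, and the machinery you propose to overcome it is the one genuinely shaky part of the proposal. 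You control the $\mu$-variation via $\norm{x-\operatorname{prox}_{\mu_k R}(x)}_2^2\le 2\mu_k M_{\mu_k}(x)$, which after integrating $\norm{\nabla M_\tau(x)}^2\le 2M_\tau(x)/\tau$ over $\tau\in[\mu_{k+1},\mu_k]$ indeed only yields $M_{\mu_{k+1}}(x)-M_{\mu_k}(x)\lesssim \log(\mu_k/\mu_{k+1})\,M(x)\asymp M(x)/(k+K)$. The paper instead uses the gradient-growth Assumption \ref{assumption: generalized-gradient-bound} through Lemmas \ref{lemma: moreau-gradient-bound} and \ref{mu-derivative-bound} to get the uniform-in-$\tau$ bound $\norm{\nabla M_\tau(x)}^2\le G_M^2\norm{x-x^*}^2$, whence
\begin{align*}
M_{\mu_{k+1}}(x)-M_{\mu_k}(x)\;\le\;(\mu_k-\mu_{k+1})\,G_M^2\norm{x-x^*}^2\;=\;O\!\big((k+K)^{-1-\xi/2}\big)\,M_{\mu_k}(x),
\end{align*}
which is summable for every $\xi>0$. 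With that, the term sits harmlessly inside the $(1+\beta_k)$ factor of Robbins--Siegmund and no absorption into the descent is needed at all.

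This matters because your proposed absorption does not clearly go through where you most need it. For $d_{a,c}>1$ the usable linear descent is obtained (as in the paper) by AM--GM against the noise floor and has weight $\asymp\alpha_k^{(3d_{a,c}-1)/(2d_{a,c})}=\Theta\!\big((k+K)^{-\xi(3d_{a,c}-1)/(2d_{a,c})}\big)$; at the endpoint $\xi=\tfrac{2d}{3}=\tfrac{2d_{a,c}}{3d_{a,c}-1}$ this exponent equals $1$, so the descent is itself only $\Theta(1/(k+K))$ and cannot dominate a $\Theta(1/(k+K))$ positive drift without a constant comparison you have not made; the "windowed argument / reweighted process" you sketch for the excursions where $M_{\mu_k}(x_k)$ is small is not carried out and is precisely where such an argument would break or require new ideas. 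So the fix is not more machinery but a sharper lemma: replace your envelope-variation estimate by the paper's Lemma \ref{mu-derivative-bound}, after which your write-up collapses to the paper's proof.
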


%\subsection{Proof outline}
%\label{ssec:proof-outline}
%\textcolor{red}{elaborate more here} We will give an outline for the proof of our main results. The core element of our convergence bounds is to use smooth inequalities to upper bound the Lyapunov function value and then use the negative drift of the Lyapunov function to obtain some one-iterate contraction plus some noise terms. The complete proofs of these results will be presented in the Appendix.

\section{Proof outlines}
\label{sec:proof-outline}
We outline the proofs of the main results in this section. All the main proofs in the paper follow a general three-step procedure. We elucidate these steps below.\\
\\
\textbf{Step 1 (Constructing a smooth Lyapunov function)}: First, we engineer a smooth Lyapunov function such that a negative drift condition (corresponds to the stability assumptions described in each setting) is satisfied. If the existing Lyapunov function $V$ already has the gradient Lipschitz property then we proceed to the next step. Otherwise, we construct a smooth approximation of the original Lyapunov function by using infimal convolution to obtain a smooth function $M_\mu = V \square \frac{\norm{\cdot}^2}{2\mu}$. Then, we show that our smooth Lyapunov function also has a negative drift property similar to the original Lyapunov function. For instance, we have to show that an equivalence of the Assumption \ref{assumption: smooth-time-derivative} for the non-smooth case. This negative drift condition corresponds to the time-derivative condition of the system in the ODE representation. \\
\\
To show the existence of the negative drift for the Moreau envelope, we observe that as $\mu \rightarrow 0$ then $\operatorname{prox}_{\mu R}(x) \rightarrow x, R(x)-M_\mu(x) \rightarrow 0$ and thus $\nabla M_\mu(x) \in \partial R(\operatorname{prox}_{\mu R}(x)) \rightarrow \partial R(x)$, which means that we will have an approximation of the time derivative of $R$ at $x$ for sufficiently small $\mu$. This comes at the expense of the smoothness constant of the Moreau envelope, that is $1/\mu$.
\\
\\
\textbf{Step 2 (Obtaining the one-iterate bound)}: Now, consider the discrete-time system, this is where the smoothness of the Lyapunov function became desirable. From the smoothness of the Lyapunov function and the negative drift, we apply the smooth inequality and the negative drift condition to obtain a one-iterate bound in the form of:
    \begin{align*}
        \E\left[M_{\mu_{k+1}}(x_{k+1}) | \cF_k \right] \leq \br{1- O\br{ \alpha_k^{c'}}}M_{\mu_k}(x_k) + O\br{\alpha_k^2}
    \end{align*}
    for some constant $c' > 0$. In Subsection \ref{ssec: proof-outline-step-2}, we give a detailed outline for obtaining the one-iterate bound for two different cases: the time-invariant Lyapunov function case and the time-varying Lyapunov function case.\\
\\
\textbf{Step 3 (Obtaining the finite-time bounds)}: Lastly, from the one-iterate bound above we get a corresponding finite-time bound. This is done by choosing a proper step size $\alpha_k$ and the parameter $\mu_k$ (for the non-smooth cases). Note that if our chosen $\alpha_k$ is too large then the algorithm might diverge while if $\alpha_k$ is too small then the algorithm would converge too slowly. In addition, for the non-smooth settings, choosing the right smoothness parameter is also vital for achieving a competitive convergence rate since if $\mu_k$ is too small then this will cause the smoothness parameter of the Lyapunov function to be too large and hinder the rate of convergence. On the other hand, if $\mu_k$ is not sufficiently small then the Moreau envelope would not be able to approximate the original Lyapunov function well enough. Thus, in order to complete the proofs, a detailed analysis with a precise choice of parameters is required.
\\
\\
These are the overarching steps to achieve finite-time bounds for the stochastic approximation algorithm. However, in some settings, our proof requires more involved techniques such as using a time-varying Lyapunov function with a time-varying smoothness parameter. For more details, please refer to the full proofs in the Appendix. With this framework, we will go on to outline the proofs of specific lemmas and theorems below:

%\subsection{Proof outline of negative drift lemmas \ref{lemma: moreau-negative-drift}, \ref{lemma: moreau-negative-drift-2}}
\subsection{Constructing a smooth Lyapunov function}
\label{ssec: proof-outline-step-1}
Now, we will present a proof outline. In cases of a non-smooth Lyapunov function (subsections \ref{ssec: non-smooth exponential}, \ref{ssec: non-smooth-subexponential}), we need to take the Moreau envelope of the Lyapunov function so that we can upper-bound the Lyapunov function using the smooth inequality. While there are many ways of smoothing \cite{beck-smoothing}, the Moreau envelope is most desirable here since it can preserve the negative drift condition with a proper choice of parameters. We rescale the Lyapunov function to some other Lyapunov function that has a quadratic growth $R = V^{\frac{2}{a}}$ and then take the Moreau envelope as $M_\mu = R \square \frac{\norm{\cdot}^2}{2\mu}$. Note that when the Lyapunov function is already smooth, we basically choose $M_0$ as the Lyapunov function.

To show the existence of the negative drift for the Moreau envelope, we observe that as $\mu \rightarrow 0$ then $\operatorname{prox}_{\mu R}(x) \rightarrow x, R(x)-M_\mu(x) \rightarrow 0$ and thus $\nabla M_\mu(x) \in \partial R(\operatorname{prox}_{\mu R}(x)) \rightarrow \partial R(x)$, which means that we will have an approximation of the time derivative of $R$ at $x$ for sufficiently small $\mu$. This comes at the expense of the smoothness constant of the Moreau envelope, that is $1/\mu$.

Denote $u = \operatorname{prox}_{\mu R}(x)$, notice that $\nabla M_\mu(x) = g_u \in \partial R(\operatorname{prox}_{\mu R}(x)) = \partial R(u)$ and $\nabla M_\mu(x) = \frac{x-u}{\mu}$ by the Envelope Theorem. \textcolor{black}{Since $R, V$ are polynomially bounded functions from Assumption \ref{assumption: polynomial-growth}, we have $R, V$ are Lipschitz for some ball radius $\delta$. From Rademacher's Theorem \cite{Evans2015-measure-theory-book}, we have $R, V$ are differentiable almost everywhere in this ball. Let $g_x^V \in \partial V(x)$, we have $\nabla R(p) = \frac{2}{a} V(p)^{\frac{2}{a}-1} \nabla V(p)$ for all $p \in \Omega_R$. Thus, for $g_u \in \partial R(u)$, we can represent $g_u =  \frac{x-u}{\mu} = \sum_i w_i v_i$ where $v_i = \lim_{u_i \rightarrow u} \nabla R(u_i) = \frac{2}{a} V(u)^{\frac{2}{a}-1} \lim_{u_i \rightarrow u} \nabla R(u_i)$ are the limit points in $\partial R(u)$ and $\sum_i w_i = 1$ where $w_i$ are non-negative weights. From here, we have
\begin{align}
    \nonumber
    \langle \nabla M_\mu(x), F(x) \rangle &= \left\langle \sum_i w_i v_i, F(u) \right\rangle + \left\langle \frac{x-u}{\mu}, F(x)-F(u) \right\rangle \\
    % &= \langle g_u, F(u) \rangle + \left\langle \frac{x-u}{\mu}, F(x)-F(u) \right\rangle \\
    \nonumber
    &= \left\langle \sum_i w_i \lim_{u_i \rightarrow u} \nabla R(u_i), F(u) \right\rangle + \left\langle \frac{x-u}{\mu}, F(x)-F(u) \right\rangle \\
    \nonumber
    &= \underbrace{\frac{2}{a}V(u)^{\frac{2}{a}-1} \left\langle \sum_i w_i \lim_{u_i \rightarrow u} \nabla V(u_i), F(u) \right\rangle}_{\text{chain rule}} + \left\langle \frac{x-u}{\mu}, F(x)-F(u) \right\rangle \\
    \nonumber
    &\overset{(a)}{=} \frac{2}{a}V(u)^{\frac{2}{a}-1} \langle g_u^V, F(u) \rangle + \left\langle \frac{x-u}{\mu}, F(x)-F(u) \right\rangle \\
    \nonumber
    &\overset{(b)}{\leq} -\frac{2\gamma}{a} V(u)^{\frac{2}{a}-1} V(u)^c + \left\langle \frac{x-u}{\mu}, F(x)-F(u) \right\rangle \\
    \nonumber
    &= -\frac{2\gamma}{a} R(u)^{\frac{a(c-1)}{2}+1} + \left\langle \frac{x-u}{\mu}, F(x)-F(u) \right\rangle \text{ since } R = V^{\frac{2}{a}} \\
    \label{eqn: chain-rule-fully-written}
    &\leq -\frac{2\gamma}{a} R(u)^{\frac{a(c-1)}{2}+1} + \frac{C\norm{x-u}^2}{\mu} \text{ from Assumption \ref{assumption: F-lipschitz}}.
\end{align}
Here, we have (a) follows from $g_u^V \in \partial V(u)$ and the definition of Clarke generalized gradient, (b) follows from Assumption \ref{assumption: clarke-time-derivative}.} From here, we will lower-bound $V(u)$ and upper-bound the quantity $\norm{\frac{x-u}{\mu}}$ so that the RHS will be upper-bounded by some terms dependent on $M_\mu(x)$. The resulting bound will be:
\begin{align*}
    \langle \nabla M_\mu(x), F(x) \rangle \leq -D_1 M_\mu(x)^{\frac{a(c-1)}{2}+1} + \mu D_2 M_\mu(x) \forall x \in \R^d
\end{align*}
for $\mu$ is some bounded positive real, $D_1,D_2$ are some positive constants and $c \geq 1$. Note that if $c = 1$ then choosing a sufficiently small $\mu$ would be adequate to obtain a negative drift. However, if $c > 1$ then there exists $r > 0$ such that the RHS is positive for any $\norm{x} < r$. Thus, this will motivate us to use a time-varying $\mu$ parameter in such a scenario. We reserve the proof for this lemma in the Appendix \ref{sssec:finite-time-corollary-1-proof} and \ref{sssec:finite-time-corollary-3-proof}.

%\subsection{Proof outline of one-iterate bounds}

%\subsection{Proof outline of theorem \ref{main-theorem-1}, \ref{main-theorem-2}}

%\subsection{Proof outline of \ref{finite-time-corollary-1}, \ref{finite-time-corollary-2}, \ref{finite-time-corollary-3}}
\subsection{Obtaining the one-iterate bound}
\label{ssec: proof-outline-step-2}
Our next step is to obtain a one-iterate bound using the negative drift condition. This is the essential step to obtain the finite-time bound.

\subsubsection{Time-invariant Lyapunov function}
\label{sssec: proof-outline-time-invariant}
To establish the finite-time bounds for our main results, note that from smoothness and the negative drift assumption of our Lyapunov function, we have the one-iterate bound for some constants $\nu_1, \nu_2, \nu_3, A^*, B^*$:
\begin{align*}
    \E\left[M_\mu(x_{k+1}) | \cF_k \right] \leq \br{1 + \alpha_k \mu \nu_1 + \frac{\alpha_k^2 \nu_2}{\mu}}M_\mu(x_k) \underbrace{- \alpha_k \nu_3 M_\mu(x_k)^c}_{\text{The negative drift term}}
    + \underbrace{\frac{\alpha_k^2}{\mu}\br{A^*+B^*\norm{x^*}^2}}_{\text{The variance term}}.
\end{align*}
Our goal is to somehow reduce this to a simpler form:
\begin{align*}
    \E\left[M_\mu(x_{k+1}) | \cF_k \right] \leq \br{1- \nu \alpha_k^{c'}}M_\mu(x_k) + \frac{\alpha_k^2}{\mu}\br{A^*+B^*\norm{x^*}^2}
\end{align*}
for some constant $\nu, c' > 0$. First, we need to absorb the negative drift term into the $M_\mu(x_k)$ term. If $c = 1$ (the exponential stable case) then we are done, otherwise, we bound them with:
\begin{align*}
    \alpha_k \nu_3 M_\mu(x_k)^c + (c-1)\frac{\alpha_k^2}{\mu} \geq c \nu \alpha_k^{c'}M_\mu(x_k)
\end{align*}
for some constant $\nu, c' > 0$, which yield:
\begin{align*}
    \E\left[M_\mu(x_{k+1}) | \cF_k \right] \leq \br{1 + \alpha_k \mu \nu_1 + \frac{\alpha_k^2 \nu_3}{\mu}}M_\mu(x_k) - \nu \alpha_k^{c'}M_\mu(x_k)
    + \frac{\alpha_k^2}{\mu}\br{cA^*+B^*\norm{x^*}^2}.
\end{align*}
%Note that in that case, $\alpha_k^{c'} = \frac{\alpha_k^{\frac{2c-1}{c}}}{\mu_k^{\frac{c-1}{c}}} \approx (k+K)^{-\frac{(3c-1)\xi}{2c}}, \alpha_k \mu_k \approx \br{k+K}^{-1.5\xi}, \mu_k - \mu_{k+1} \approx \br{k+K}^{-0.5\xi-1}$ and $\frac{\alpha_k^2}{\mu_k} \approx \br{k+K}^{-1.5\xi}$
In both cases, we can choose $\alpha, K$ so that this bound holds for all $k$ and for $c > 1$:
\begin{align*}
    1 + \alpha_k \mu_k \nu_1 + \frac{\alpha_k^2 \nu_2}{\mu_k} - c \nu \alpha_k^{c'} \leq 1 - \nu \alpha_k^{c'}.
\end{align*}
And thus, the one-iterate bound can be reduced to:
\begin{align*}
    \E\left[M_\mu(x_{k+1}) | \cF_k \right] \leq \br{1- \nu \alpha_k^{c'}}M_\mu(x_k) + \frac{\alpha_k^2}{\mu}\br{A^*+B^*\norm{x^*}^2}.
\end{align*}

\subsubsection{Time-varying Lyapunov function}
\label{sssec: proof-outline-time-variant}
For a time-varying Lyapunov $M_{\mu_k}(x_k)$ (with a changing smoothness parameter $\mu_k$), the situation is slightly different. Not only we have to care about the time-derivative of $M$ w.r.t to $x$, we have to quantify the change of $M$ in terms of $\mu$ as well. That being said, the latter will incur an additional $(\mu_k-\mu_{k+1})\nu_2 M_{\mu_k}(x_k)$ term, which gives:
\begin{align*}
    \E\left[M_{\mu_{k+1}}(x_{k+1}) | \cF_k \right] \leq \br{1 + \alpha_k \mu_k \nu_1 + \underbrace{(\mu_k-\mu_{k+1})\nu_2}_{\text{change of } M \text{ w.r.t }\mu} + \frac{\alpha_k^2 \nu_3}{\mu_k}}M_{\mu_k}(x_k) \\
    \underbrace{- \alpha_k \nu_4 M_{\mu_k}(x_k)^{\frac{a(c-1)}{2}+1}}_{\text{The negative drift term}}
    + \underbrace{\frac{\alpha_k^2}{\mu_k}\br{A^*+B^*\norm{x^*}^2}}_{\text{The variance term}}.
\end{align*}
Apply a similar routine to absorb the negative drift term, we obtain the bound:
\begin{align*}
    \E\left[M_{\mu_{k+1}}(x_{k+1}) | \cF_k \right] \leq \br{1 + \alpha_k \mu_k \nu_1 + (\mu_k-\mu_{k+1})\nu_2 + \frac{\alpha_k^2 \nu_3}{\mu_k}}M_{\mu_k}(x_k) \\
    -\nu \alpha_k^{c'}M_{\mu_k}(x_k)
    + \frac{\alpha_k^2}{\mu_k}\br{cA^*+B^*\norm{x^*}^2}.
\end{align*}
%Note that in that case, $\alpha_k^{c'} = \frac{\alpha_k^{\frac{2c-1}{c}}}{\mu_k^{\frac{c-1}{c}}} \approx (k+K)^{-\frac{(3c-1)\xi}{2c}}, \alpha_k \mu_k \approx \br{k+K}^{-1.5\xi}, \mu_k - \mu_{k+1} \approx \br{k+K}^{-0.5\xi-1}$ and $\frac{\alpha_k^2}{\mu_k} \approx \br{k+K}^{-1.5\xi}$
In the nonsmooth sub-exponential stable case, our chosen parameters are $\alpha_k = \frac{\alpha}{\br{k+K}^{\xi}}, \mu_k = \frac{\mu}{\br{k+K}^{\upsilon}}$ where $0 \leq \xi \leq 1$ and $\nu = \frac{\xi}{2}$ (in this case, we have $c' = \frac{3d_{a,c}-1}{2d_{a,c}}$) which gives $\alpha_k \approx \mu_k^2 \Rightarrow \alpha_k \mu_k \approx \frac{\alpha_k^2}{\mu_k} = O\br{k^{-\frac{3\xi}{2}}} \gg \mu_k-\mu_{k+1} = O\br{k^{-\frac{\xi}{2}-1}}$ since $\xi \leq \frac{2d}{3} \leq 1$ for $c \geq 1$. On the other hand, we have $c' = \frac{(3d_{a,c}-1)}{2d_{a,c}} \Rightarrow \alpha_k^{c'} = O\br{k^{-\frac{(3d_{a,c}-1)\xi}{2d_{a,c}}}} \ll O\br{k^{-\frac{3\xi}{2}}}$. Thus, we can choose $\alpha, K$ so that this bound holds for all $k$ and for $c > 1$:
\begin{align*}
    1 + \alpha_k \mu_k \nu_1 + (\mu_k-\mu_{k+1})\nu_2 + \frac{\alpha_k^2 \nu_3}{\mu_k} - c \nu \alpha_k^{c'} \leq 1 - \nu \alpha_k^{c'}.
\end{align*}
And thus, the one-iterate bound can be reduced to:
\begin{align*}
    \E\left[M_{\mu_{k+1}}(x_{k+1}) | \cF_k \right] \leq \br{1- \nu \alpha_k^{c'}}M_{\mu_k}(x_k) + \frac{\alpha_k^2}{\mu_k}\br{A^*+B^*\norm{x^*}^2}
\end{align*}

\subsection{Obtaining the finite-time bounds}
\label{ssec: proof-outline-step-3}
Expand this one-iterate bound and we obtain:
\begin{align*}
    \E\left[M_{\mu_{k+1}}(x_{k+1}) | \cF_k \right] \leq \underbrace{\prod_{i=1}^k \br{1-\nu \alpha_k^{c'}}}_{T_1} M_{\mu_0}(x_0) + \br{A^*+B^*\norm{x^*}^2} \underbrace{\sum_{i = 0}^{k} \frac{\alpha_i^2 \prod_{j = i+1}^{k} \left( 1 - \nu \alpha_j^{c'} \right)}{\mu_i}}_{T_2}.
\end{align*}
The $T_1$ term represents how fast the RHS vanishes with each $1-\nu \alpha_k^{c'}$ term is the diminishing factor at each iteration while the $T_2$ term represents the impact of noise in our bound.\\
\\
While it is not possible to solve this recursion for any choice of step size, it is possible to obtain finite-time bounds with some specific choice of step size. Now, we proceed to bound the terms $T_1$ and $T_2$. First, the $T_1$ term can be bounded as the following:
\begin{align*}
    T_1 = \prod_{i=1}^k \br{1-\nu \alpha_k^{c'}} \leq e^{-\nu \sum_{i=1}^k \alpha_k^{c'}} \leq 
    \begin{cases}
        \br{\frac{K}{k+K}}^{\nu\alpha} \text{ if } \xi = \frac{1}{c'}\\
        e^{-\frac{\nu}{1-\xi}\br{\br{k+K}^{1-\xi}-K^{1-\xi}}} \text{ if } \xi \in \br{0,\frac{1}{c'}}
    \end{cases}.
\end{align*}
Secondly, if $\xi = \frac{1}{c'}$, the $T_2$ term can be bounded as follows:
\begin{align*}
    T_2 \leq \frac{4\alpha^2}{\br{k+K}^{\alpha \nu}} \sum_{i=1}^k \frac{1}{\br{k+K+1}^{2-\alpha \nu}} \forall k \in \Z^+.
\end{align*}
Then, we perform some casework on the value of $\alpha \nu$ based on its relation to $2$ to obtain the finite-time bound. For $\xi \in \br{0,\frac{1}{c'}}$, we can prove via induction that:
\begin{align*}
    T_2 \leq \frac{2\alpha}{\nu} \frac{1}{\br{k+K+1}^{\xi c'}} \forall k \in \Z^+.
\end{align*}
Lastly, for completeness, we consider $\xi = 0$. In this case, $T_1$ will diminish exponentially fast while $T_2$ does not converge to $0$. This means that the algorithm will converge to a ball exponentially fast but will not converge to the optimal point.\\
\\
Hence, our proof is complete.

\subsection{Proof outline of the almost sure convergence results}
\label{ssec: proof-outline-almost-sure}
In this subsection, we will present the proof outline for Corollaries \eqref{almost-sure-convergence-1}, \eqref{almost-sure-convergence-2}, \eqref{almost-sure-convergence-3}. First, we recall the following theorem in \cite{neurodynamic}:
\begin{theorem}
(Supermartingale Convergence Theorem) Let $\{Y_t\}_{t \geq 0}, \{X_t\}_{t \geq 0}, \{Z_t\}_{t \geq 0}$ be the sequences of random variables and let $\cF_t$ be the sets of random variables such that $\cF_t \subset \cF_{t+1}$. Suppose that:
\begin{enumerate}
    \item The random variables $Y_t, X_t, Z_t$ are nonnegative and are functions of random variables in $\cF_t$.
    \item For each $t$, we have $\E[Y_{t+1} | \cF_t] \leq Y_t-X_t+Z_t$.
    \item The sequence $\{Z_t\}_{t \geq 0}$ is summable.
\end{enumerate}
Then we have $\{X_t\}_{t \geq 0}$ is summable and $\{Y_t\}_{t \geq 0}$ converges to a nonnegative random variable $Y$ with probability $1$.
\end{theorem}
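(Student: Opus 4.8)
The plan is to reduce the claim to the classical almost-sure convergence theorem for nonnegative supermartingales (Doob) by constructing an auxiliary process whose increments absorb the $-X_t$ and $+Z_t$ terms. Concretely, I would introduce
\[
    S_t = Y_t + \sum_{s=0}^{t-1} X_s - \sum_{s=0}^{t-1} Z_s,
\]
which is $\cF_t$-measurable (and integrable) by hypothesis (1). Using hypothesis (2),
\[
    \E[S_{t+1}\mid\cF_t] = \E[Y_{t+1}\mid\cF_t] + \sum_{s=0}^{t}(X_s-Z_s) \le Y_t - X_t + Z_t + \sum_{s=0}^{t}(X_s-Z_s) = S_t,
\]
so $\{S_t\}_{t\ge 0}$ is a supermartingale adapted to $\{\cF_t\}_{t\ge 0}$.

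The next step is to make $\{S_t\}$ bounded below so that Doob's theorem applies. Since $Y_t\ge 0$ and $X_s\ge 0$, we have $S_t \ge -\sum_{s=0}^{t-1} Z_s \ge -\sum_{s=0}^{\infty} Z_s =: -C$. In the applications of interest in this paper (Corollaries~\ref{almost-sure-convergence-1}, \ref{almost-sure-convergence-2}, \ref{almost-sure-convergence-3}) the term playing the role of $Z_t$ is a deterministic multiple of $\alpha_k^2$ or $\alpha_k^2/\mu_k$, so $C$ is a finite constant, $S_t + C$ is a nonnegative supermartingale, and it converges almost surely to an integrable limit. When $Z_t$ is genuinely random and only $\sum_t Z_t < \infty$ almost surely, I would localize: for $m\in\Z^+$ set $\tau_m = \inf\{t : \sum_{s=0}^{t} Z_s > m\}$, a stopping time; the stopped process $\{S_{t\wedge\tau_m}\}$ remains a supermartingale and is bounded below by $-m$, hence converges almost surely, and in particular converges on $\{\tau_m = \infty\}$. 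Since $\sum_t Z_t < \infty$ almost surely, the events $\{\tau_m = \infty\}$ increase to a probability-one set as $m\to\infty$, so $S_t$ converges almost surely.

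Finally I would unwind the definition of $S_t$. Because $\sum_{s=0}^{t-1} Z_s$ converges to a finite limit almost surely, the almost-sure convergence of $S_t$ forces $Y_t + \sum_{s=0}^{t-1} X_s$ to converge almost surely. The partial sums $\sum_{s=0}^{t-1} X_s$ are nondecreasing and $Y_t\ge 0$, so a convergent sum of these two nonnegative quantities is possible only if each converges separately; hence $\sum_{s=0}^{\infty} X_s < \infty$ almost surely (summability of $\{X_t\}$), and then $Y_t = \big(Y_t+\sum_{s=0}^{t-1}X_s\big) - \sum_{s=0}^{t-1}X_s$ is the difference of two almost-surely convergent sequences, so $Y_t$ converges almost surely; since each $Y_t\ge 0$, the limit $Y$ is nonnegative.

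\textbf{Main obstacle.} The only delicate point is the precise sense of hypothesis (3): if "summable" means "summable with probability one" rather than in a deterministic (or $L^1$) sense, the lower bound on $S_t$ is itself random and one cannot invoke the nonnegative-supermartingale theorem directly — the stopping-time localization described above is exactly what repairs this. Everything else — verifying the supermartingale property and splitting the convergent limit into its two monotone/nonnegative pieces — is routine, and no uniform integrability is needed beyond Doob's theorem for supermartingales that are bounded below.
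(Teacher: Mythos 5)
The paper does not actually prove this statement: it is recalled verbatim from the cited reference (Bertsekas--Tsitsiklis) and used as a black box in the almost-sure convergence corollaries, so there is no in-paper argument to compare against. Your proposal is the standard Robbins--Siegmund proof and it is correct: the compensated process $S_t$ is indeed a supermartingale (the $-X_t+Z_t$ from hypothesis (2) cancels the increment of the running sums), the stopping-time localization $\tau_m$ correctly repairs the fact that the lower bound $-\sum_s Z_s$ is random when summability holds only almost surely, and the final unwinding is sound because $\sum_{s<t}X_s$ is nondecreasing and dominated by the convergent sequence $Y_t+\sum_{s<t}X_s$, hence itself convergent, leaving $Y_t$ as a difference of two convergent sequences. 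The only point you gloss over is integrability: hypothesis (1) as stated gives nonnegativity and adaptedness but not $\E[Y_t]<\infty$, which Doob's theorem needs; this is handled either by assuming $\E[Y_0]<\infty$ (automatic in all three applications in the paper, where $Y_0$ is a deterministic Lyapunov value) or by a further truncation, and does not affect the substance of the argument.
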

Thus, to apply the Supermartingale Convergence Theorem, we will have to choose proper $Y_t, Z_t, X_t$ and then decompose the one-iterate accordingly. With this in mind, we choose $Y_t$ to be the value of the Lyapunov function at step $t$, $X_t$ to be the negative drift and $Z_t$ to be the variance term. Since we chose $\alpha_k = \frac{\alpha}{\br{k+K}^{\xi}}$, we can impose conditions on $\xi$ so that $\{Z_t\}_{t \geq 0}$ is summable and $X_t, Y_t$ can be driven to $0$. Hence, we have the value of our Lyapunov function converges to $0$ almost surely.

\section{Numerical experiments}
\label{sec:experiments}
In this section, we will demonstrate the performance of our algorithm in three settings: non-smooth exponentially stable systems (selector control), smooth sub-exponentially stable systems (Artstein's circle), and non-smooth sub-exponentially stable systems (a toy example from \cite{khalil-book}). In each experiment, we measure the slope of the linear regressor of the Lyapunov function value to empirically validate our convergence complexities. We omit the smooth exponentially stable setting since it was done in \cite{nonlinear-sa} via the Q-learning experiment. Note that the following settings cannot be studied using the analysis from prior works such as \cite{zaiwei-envelope, nonlinear-sa}. Indeed, the dissipativity assumption in \cite{nonlinear-sa} would not be applicable to the selector control example in Subsection \ref{ssec: selector-control}, and the contractive operator assumption would not be able to study subexponentially stable settings in Subsections \ref{ssec: toy-example} and \ref{ssec: nonsmooth-subexponential-experimental-setting}.

In all of our experimental settings below, the noise sequence $\{w_k\}_{k \geq 0}$ sampled as an i.i.d. Gaussian noise sequence with mean $0$ and variance $1$.

% \textcolor{red}{Mention the noise and hyperparameters}

% \textcolor{red}{Mention the response to reviewer 3 Q 1}

\subsection{Selector control}
\label{ssec: selector-control}
In this subsection, we consider the setting corresponding to the nonsmooth exponential case which is selector control, which corresponds to our results in Subsection \ref{ssec: non-smooth exponential}. In practical applications of control systems, oftentimes we will have discrete measurements which make our control systems non-smooth. Furthermore, external factors such as numerical precision, hardware and environmental issues can yield noisy measurements. One such example is the selector control (Example 4.4 in \cite{Johansson2003PiecewiseLC}), which is commonly utilized for control in boilers, power systems, and nuclear reactors \cite{PID-controllers-book}. Consider the system as shown in Figure \ref{fig:selector_control} with the dynamic $\dot{x} = Ax + Bu$, we can write the closed-loop dynamic as:
\begin{align*}
    \dot{x} &= F(x,u) = Ax + Bu \\
    &= Ax + B\min \{k_1^Tx, k_2^Tx\} \\
    &= (A + Bk_1^T)x + B\min \{0, (k_2-k_1)^Tx \} \\
    &= (A + Bk_1^T)x + B\min \{0, k^Tx\}
\end{align*}
where $k = k_2-k_1$. From the closed-loop dynamic, we have a corresponding nonlinear stochastic discrete-time update with the noise sequence $w_k$ and the step size sequence $\alpha_k$:
\begin{align*}
    x_{k+1} = x_k + \alpha_k \br{Ax_k + B\min \{k_1^Tx_k, k_2^Tx_k\} + w_k}.
\end{align*}
Let $A_1 = A + Bk_1^T, A_2 = A + Bk_2^T$ and consider the system:
\begin{align*}
A_1 = 
\begin{bmatrix}
    -5 & -4 \\
    -1 & -2
\end{bmatrix},
B = \begin{bmatrix}
    -3 \\
    -21
\end{bmatrix},
k = \begin{bmatrix}
    1 \\
    0
\end{bmatrix}.
\end{align*}
The system is piecewise linear hence it is straightforward to see that this system satisfies Assumption \ref{assumption: F-lipschitz}. Note that there is no global quadratic Lyapunov function for this system, the dissipativity assumption in \cite{nonlinear-sa} (which uses $\norm{x-x^*}^2$ as the Lyapunov function) would not be applicable. Hence we have to choose a piecewise quadratic Lyapunov, which is non-smooth, in order to take into account of the hybrid nature of the system:
\[
V =
\begin{cases}
    x^TPx &\text{ if } k^Tx \leq 0 \\
    x^T(P +  \eta kk^T)x &\text{ otherwise }
\end{cases}
\]
where $\eta > 0$. Choose 
\[
P = 
\begin{bmatrix}
    1 & 0 \\
    0 & 3
\end{bmatrix}, \eta = 9
\]
one can check that $P, P + \eta kk^T$ are symmetric positive definite matrix that satisfies:
\begin{align*}
    (A + Bk_1^T)P + P(A + Bk_1^T)^T < 0 \\
    (A + Bk_2^T)(P + \eta kk^T) + (P +  \eta kk^T)(A + Bk_2^T)^T < 0
\end{align*}
which implies that there exists $\gamma', \gamma > 0$ such that $\dot{V} \leq -\gamma' \norm{x}^2 \Rightarrow \dot{V} \leq -\gamma V$. Thus, the system satisfies Assumptions \ref{assumption: clarke-time-derivative}. Furthermore, since the Lyapunov function is piecewise quadratic, the Clarke generalized gradient is linearly bounded. Hence, the Assumptions \ref{assumption: polynomial-growth} and \ref{assumption: generalized-gradient-bound} are satisfied for $a=2$. Thus, we can apply Theorem \ref{finite-time-corollary-1} and expect that the system will converge at the rate of $O\br{\frac{1}{k}}$. Applying the SA algorithm, we obtain results in Figure \ref{fig:selector_control_lyapunov}:
\begin{figure}[h]
\centering
\begin{subfigure}[t]{0.49\textwidth}
    \includegraphics[width = 8cm]{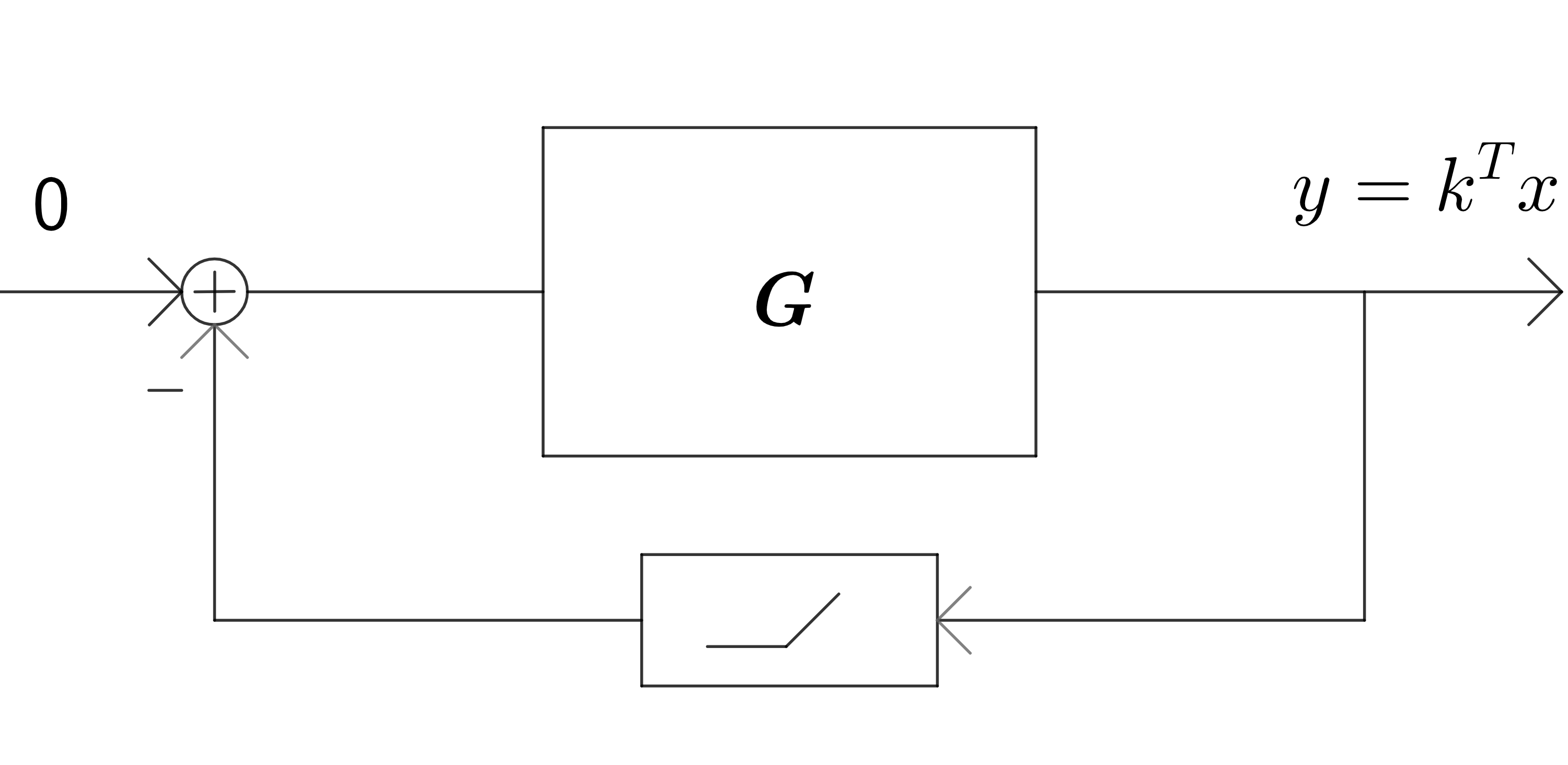}
    \caption{Selector control electrical grid illustrations in the feedback control form}
    \label{fig:selector_control}
\end{subfigure}\hfill
\begin{subfigure}[t]{0.49\textwidth}
    \includegraphics[width = 8cm]{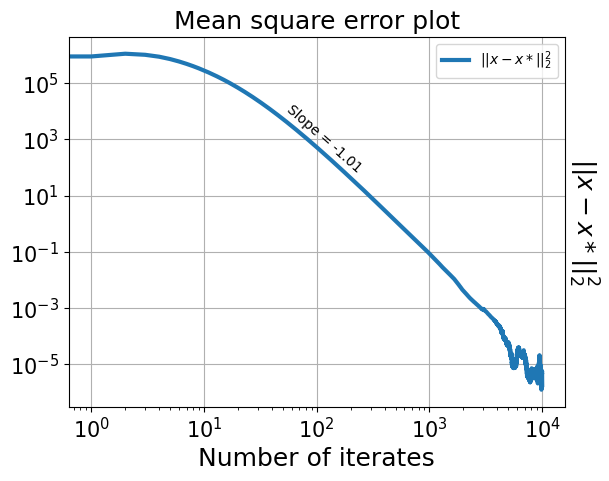}
    \caption{Mean square error plot of the selector control model when $\xi = 1$}
    \label{fig:selector_control_lyapunov}
\end{subfigure}
\caption{An example of a non-smooth exponentially stable system with the elector control example.}
\end{figure}

Note that the slope value $-1.01$ of the linear regressor of the log plot in Figure \ref{fig:selector_control_lyapunov} roughly matches the complexity $O\br{\frac{1}{k}}$ for $\xi = 1$. This empirical result seems to suggest that our complexity bound is tight.

%\subsubsection{Saturated systems}
%Another example of a nonsmooth system is the system with saturated control.

\subsection{Nonlinear smooth sub-exponentially stable systems}
\label{ssec: toy-example}
Now, we consider the following example of a nonlinear system in \cite{khalil-book} which represents an example of a smooth sub-exponentially stable system presented in Subsection \ref{ssec: smooth-sub-exponential}. Let the dynamic of the nonlinear system with the state vector $x \in \R^2$ be:
\begin{align*}
% \begin{cases}
%     \dot{x}_1 &= -x_2-x_1^3 \\
%     \dot{x} &= x_1-x_2^3    
% \end{cases}. \\
\dot{x} =  
\begin{bmatrix}
    \dot{x}_1 \\
    \dot{x}
\end{bmatrix}
= 
\begin{bmatrix}
    -x_2-x_1^3 \\
    x_1-x_2^3
\end{bmatrix}.
\end{align*}
From the dynamic, we derive the corresponding stochastic nonlinear discrete-time dynamic as follows
\begin{align*}
\begin{bmatrix}
    x_{k+1,1} \\
    x_{k+1,2}
\end{bmatrix}
=
\begin{bmatrix}
    x_{k,1} \\
    x_{k,2}
\end{bmatrix}
+
\alpha_k
\br{
\begin{bmatrix}
    -x_{k,2}-x_{k,1}^3 \\
    x_{k,1}-x_{k,2}^3
\end{bmatrix}
+
\begin{bmatrix}
    w_{k,1} \\
    w_{k,2}
\end{bmatrix}
}.
\end{align*}
Under Assumption \ref{assumption: bounded-iterates}, this system also satisfies the Lipschitz Assumption \ref{assumption: F-lipschitz}. Choose the Lyapunov function $V(x_1,x_2) = \frac{x_1^2+x_2^2}{2}$, we have $V$ is gradient Lipschitz and the time derivative of the Lyapunov function becomes:
\begin{align*}
    \dot{V} &= x_1\dot{x}_1 + x_2\dot{x} = x_1(-x_2-x_1^3) + x_2(x_1-x_2^3) \\
    &= -(x_1^4+x_2^4) \leq -\frac{(x_1^2+x_2^2)^2}{2} = -2V^2
\end{align*}
Thus, the system satisfies Assumption \ref{assumption: quadratic-growth} and Assumption \ref{assumption: asymptotic-stability} for $c = 2$ and $\gamma = 2$. Given that these Assumptions are satisfied, we can apply Theorem \ref{finite-time-corollary-2} and expect that the system will converge at the rate of $O\br{\frac{1}{k^{\frac{\xi}{c}}}}$. We obtain the following plots in Figure \ref{fig:smooth-lyapunov}.
\begin{figure}[h]
\centering
\begin{subfigure}[t]{0.5\textwidth}
    \includegraphics[width = 8cm]{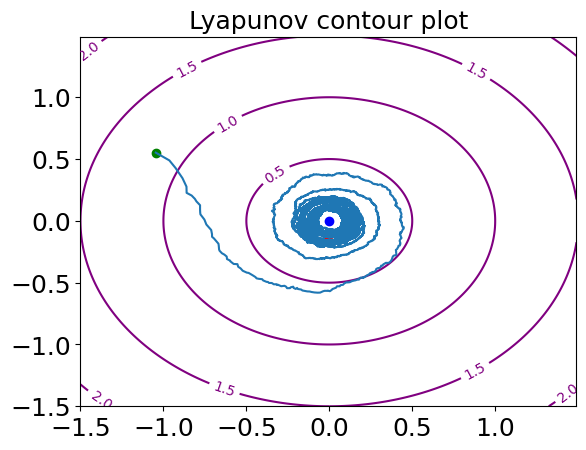}
    \caption{Trajectory of the system}
    \label{fig:smooth-contour}
\end{subfigure}\hfill
\begin{subfigure}[t]{0.5\textwidth}
    \includegraphics[width = 8cm]{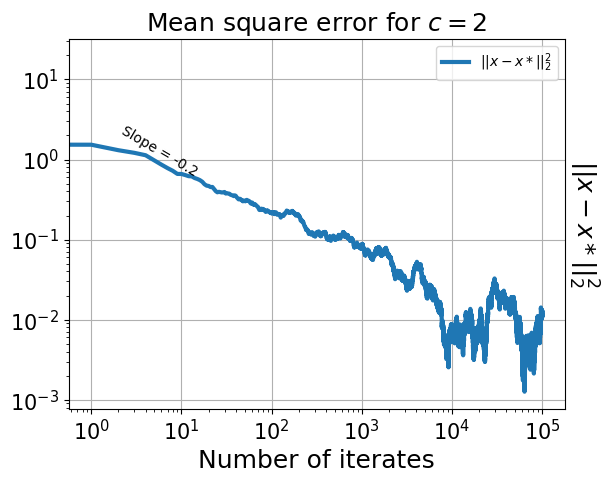}
    \caption{Mean square error plot of the problem when $\xi = 0.4$.}
    \label{fig:smooth-lyapunov}
\end{subfigure}
\caption{Smooth nonlinear system example from \cite{khalil-book}}
\end{figure}
Note that the slope value of the log plot in Figure \ref{fig:smooth-lyapunov} roughly matches the complexity $O\br{\frac{1}{k^{\frac{\xi}{c}}}}$ for $\xi = 0.4, c = 2$, which is $O\br{\frac{1}{k^{0.2}}}$. Thus, the empirical performance of our algorithm also validates the theoretical bounds.

\subsection{Nonlinear non-smooth sub-exponentially stable systems}
\label{ssec: nonsmooth-subexponential-experimental-setting}
In this subsection, we consider the Artstein's circle example which is a common example for the discontinuous Caratheodory systems. This example is notable since it cannot be stabilized by any continuous feedback \cite{nonsmooth-lyapunov-caratheodory}. The system has $x = (x_1,x_2)$ as the state and $\R^2$ as the state space. In addition, its dynamics can be described as follows:
\begin{align*}
    \begin{bmatrix}
    \dot{x}_1 \\
    \dot{x}
\end{bmatrix}
= 
\begin{bmatrix}
    (x_1^2-x_2^2)u(x_1,x_2) \\
    2x_1x_2u(x_1,x_2)
\end{bmatrix}
\end{align*}
% \begin{cases}
%     \label{artstein-example}
%     \dot{x} &= (x^2-y^2)u \\
%     \dot{y} &= 2xyu
% \end{cases}
where $u(x_1,x_2)$ is the discontinuous feedback control such that $u(x_1,x_2) = -1$ if $x_1 \geq 0$ and $u(x_1,x_2) = 1$ otherwise \cite{nonsmooth-lyapunov-caratheodory}. From here, we can derive the following corresponding stochastic nonlinear discrete-time system:
\begin{align*}
    \begin{bmatrix}
    x_{k+1,1} \\
    x_{k+1,2}
\end{bmatrix}
=
\begin{bmatrix}
    x_{k,1} \\
    x_{k,2}
\end{bmatrix}
+
\alpha_k
\br{
\begin{bmatrix}
    (x_{k,1}^2-x_{k,2}^2)u(x_{k,1},x_{k,2}) \\
    2x_{k,1}x_{k,2} u(x_{k,1},x_{k,2})
\end{bmatrix}
+
\begin{bmatrix}
    w_{k,1} \\
    w_{k,2}
\end{bmatrix}
}.
\end{align*}
While the system is nonlinear, we can choose $D$ sufficiently large such that the norm ball radius $D$ contains $x^*$ and project the iterates onto this norm ball to ensure that they are properly contained inside the norm ball. When the iterates are bounded, the Assumption \ref{assumption: F-lipschitz} is satisfied. Choose the Lyapunov function $V(x_1,x_2) = \sqrt{4x_1^2+3x_2^2}-|x_1|$, one can verify that $\dot{V} \leq -\frac{V^2}{15}$ and thus $V$ satisfies Assumption \ref{assumption: polynomial-growth} for $a = 1, C_{1,1} = \sqrt{3}-1, C_{2,1} = 2$ and Assumption \ref{assumption: clarke-asymptotic-stability} for $c = 2$ and $\gamma = \frac{1}{15}$, implying sub-exponential stability (one can refer to Appendix \ref{ssec: negative-drift-verification} for a detailed proof of the negative drift). This gives $d_{1,2} = 1.5$ and we obtain the following results in Figure \ref{fig:caratheodory}.
\begin{figure}[hbt!]
\centering
\begin{subfigure}[t]{0.5\textwidth}
    \includegraphics[width = 8cm]{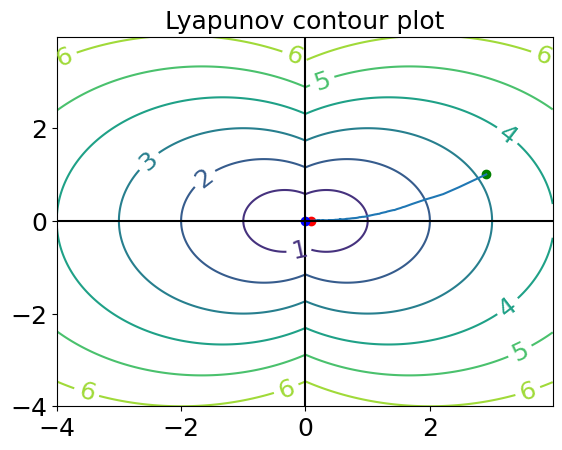}
    \caption{Trajectory of the system}
    \label{fig:caratheodory-path}
\end{subfigure}\hfill
\begin{subfigure}[t]{0.5\textwidth}
    \includegraphics[width = 8cm]{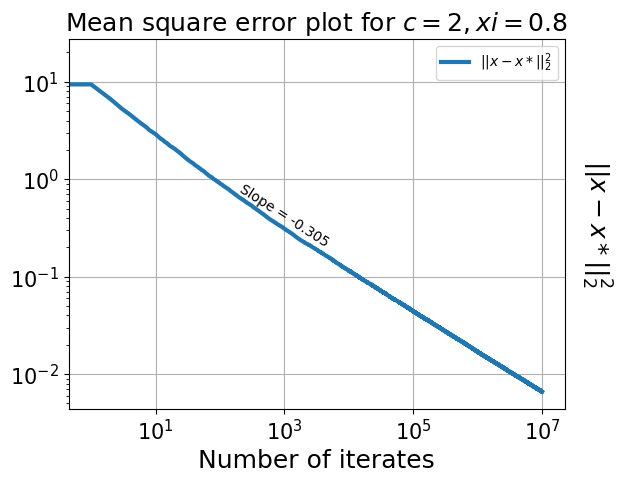}
    \caption{Lyapunov of selector control model when $\xi = 0.8$.}
    \label{fig:caratheodory-lyapunov}
\end{subfigure}
\caption{Non-smooth Caratheodory system with Artstein's circle example}
\label{fig:caratheodory}
\end{figure}
\newpage
While the plots suggest that the algorithm was able to converge, note that the slope value $-0.305$ of the linear regressor of the log plot in Figure \ref{fig:caratheodory-lyapunov} does not match the complexity $O\br{\frac{1}{k^{\frac{\xi}{2d_{a,c}}}}}$ for $\xi = 0.8, a = 1, c = 2$, which is $O\br{k^{-\frac{4}{15}}}$. Given the gap between the theoretical guarantee and the empirical performance, we conjecture that the convergence rate of $O\br{\frac{1}{k^{\frac{\xi}{2d_{a,c}}}}}$ for $\xi \in \br{0,\frac{2d}{3}}$ and $O\br{\frac{1}{k^{d-1}}}$ for $\xi = \frac{2d}{3}$ is yet to be optimal.

\section{Conclusion and Future works}
\label{sec: conclusion}
% In this work, we have obtained the finite-time bounds of the stochastic approximation algorithm for a class of problems. While the obtained rates are tight, we suspect that these are yet to be optimal. For example, in the non-smooth case the complexity $O\br{\frac{1}{\varepsilon}}$ of the exponentially stable case and the complexity $O\br{\frac{1}{\varepsilon^{3d_{a,c}-1}}}$ of the sub-exponentially stable case do not match for $c = 1$ due to the use of a time-varying smoothness parameter. Thus, the problem of obtaining optimal rates is still open for future work.

In this work, we consider stochastic approximation in discrete time, and given a Lyapunov drift of the corresponding ODE, we obtain finite-time convergence bounds for the discrete time stochastic system. The results in this paper immediately lead to the following future research directions. 
\begin{itemize}
    \item Under the exponential drift condition, we believe that the rates of convergence we obtain are tight. However, in the subexponential case, the results we have for the smooth and non-smooth cases do not match. We believe that the rates we obtain in the smooth case are tight, but those in the non-smooth case are not. This is an artifact of our proof approach. Obtaining tight rates in this non-smooth subexponential case is an immediate future direction. 
%For the exponentially stable settings, our rates are tight and match the literature. On the other hand, there is a discrepancy between the rates of the smooth subexponentially stable setting and the non-smooth subexponentially stable setting, which suggests that the finite-time bound in the latter case is yet to be tight. Thus, a potential future research direction is to further refine the bounds in the non-smooth subexponentially case to match the smooth setting. 
    \item One limitation of the algorithms proposed in this paper is that the choice of step size that leads to the right rate is heavily dependent on the problem's parameter. A future research direction is to develop robust methods that do not need such fine-grained step size choices. One promising direction to explore is the use of iterate averaging a.k.a. Polyak-Ruppert averaging \cite{POLYAK1963864}. However, such averaging leads to a set of two coupled iterates, which is known as two time-scale Stochastic Approximation that is notoriously hard to analyze. While recent works \cite{kaledin-wai2020finitetimeanalysislinear, shaanhaque2023tightfinitetimeboundstwotimescale} established bounds on two time-scale linear SA and we have established a Central Limit Theorem, obtaining such results in the full generality of the settings studied in this paper is a challenging future research direction.

    \item Finally, our negative drift conditions are general with many potential applications in Optimization \cite{thinh-decentralized-sa, hoang-i-pds}, Markov chain mixing \cite{polynomial-convergence-markov-chain, Butkovsky_2014_subgeometric_wasserstein, durmus2015subgeometricratesconvergencewasserstein, qu-glynn2023wasserstein-contractive-drift, hoang-erlang-c-mixing} or Generative Modeling \cite{raginsky2017nonconvexsgld, vempala-wibisono2022rapid-isoperimetry-ula, Durmus2014QuantitativeBoundsLangevin, augustchen2024langevindynamicsunifiedlyapunovperspective}. Therefore, a potentially fruitful future direction is to apply the analysis where it is appropriate, that is, whenever the system in question admits some form of a stability condition.
\end{itemize}

\section{Acknowledgements}
This work was partially supported by NSF grants EPCN-2144316 and CPS-2240982. The author H.H.N. also kindly thanks Sajad Khodadadian, Sushil Varma, and the anonymous referees for their suggestions to make the paper more clear and readable.

%%% References
%\bibliographystyle{plainnat}
%\bibliographystyle{unsrt}
\bibliographystyle{IEEEtran}
\bibliography{refs}

\section{Appendix}
\subsection{Proof of useful lemmas}
\label{ssec:supplementary-lemmas}
 Recall that in the proof outline for the non-smooth systems, we need to show that the existing properties of the Lyapunov function $V$ such as Assumptions \ref{assumption: polynomial-growth}, \ref{assumption: generalized-gradient-bound}, and \ref{assumption: clarke-asymptotic-stability} are all preserved after we take the infimal convolution of $V$. Thus, in this subsection, we will provide the proofs of some supplementary lemmas of the rescaled Lyapunov function $R$ and the properties of the Moreau envelope. For any polynomially bounded Lyapunov function $V$, a rescale step to make the function quadratically bounded is necessary in order to ensure the resulting Moreau envelope is smooth and also quadratically bounded. Therefore, we will show that Assumptions \ref{assumption: polynomial-growth}, \ref{assumption: generalized-gradient-bound}, and \ref{assumption: clarke-asymptotic-stability} still more or less hold for $R = V^{\frac{2}{a}}$.

\begin{lemma}
\label{lemma: envelope-properties}
Under Assumptions \ref{assumption: polynomial-growth}, \ref{assumption: generalized-gradient-bound}, and \ref{assumption: clarke-asymptotic-stability}. Let $R = V^{\frac{2}{a}}$ and $M_\mu = R \square \frac{1}{2\mu}\norm{x}^2$, we have that:
\begin{itemize}
    \item $C_{1,a}^{\frac{2}{a}}\norm{x-x^*}^2 \leq R(x) \leq C_{2,a}^{\frac{2}{a}}\norm{x-x^*}^2$
    \item $M$ is convex and $\frac{1}{\mu}$-smooth
    \item $\exists 0 \leq a < b: (1+a)M_\mu(x) \leq R(x) \leq (1+b)M_\mu(x)$
    \item If $R$ is a norm then $M$ is also a norm
\end{itemize}
\end{lemma}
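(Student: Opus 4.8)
The plan is to prove the four bullet points of Lemma~\ref{lemma: envelope-properties} in sequence, since each builds on the previous one.

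\textbf{First bullet (polynomial growth of $R$).} This is immediate: Assumption~\ref{assumption: polynomial-growth} gives $C_{1,a}\norm{x-x^*}_2^a \leq V(x) \leq C_{2,a}\norm{x-x^*}_2^a$, and since $R = V^{2/a}$ and the map $t \mapsto t^{2/a}$ is monotone increasing on $[0,\infty)$ (note $a > 0$), we just raise everything to the power $2/a$ to get $C_{1,a}^{2/a}\norm{x-x^*}_2^2 \leq R(x) \leq C_{2,a}^{2/a}\norm{x-x^*}_2^2$. No obstacle here.

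\textbf{Second bullet (convexity and $\tfrac{1}{\mu}$-smoothness of $M_\mu$).} The $\tfrac{1}{\mu}$-smoothness of the Moreau envelope is a classical fact: $M_\mu = R \,\square\, \tfrac{1}{2\mu}\norm{\cdot}_2^2$ and $\nabla M_\mu(x) = \tfrac{1}{\mu}(x - \operatorname{prox}_{\mu R}(x))$, which is $\tfrac{1}{\mu}$-Lipschitz because the prox operator is nonexpansive (when $R$ is convex) — or more robustly, because $M_\mu$ is an infimal convolution with a $\tfrac{1}{\mu}$-smooth function, one always has the $\tfrac{1}{\mu}$-smooth upper bound $M_\mu(y) \le M_\mu(x) + \langle \nabla M_\mu(x), y-x\rangle + \tfrac{1}{2\mu}\norm{y-x}_2^2$. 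For convexity of $M_\mu$ we need convexity of $R$; here I would either invoke that $R$ is assumed convex in the relevant applications, or note that $M_\mu$ is the infimal convolution of $R$ with a convex function, which preserves convexity when $R$ is convex. I would cite \cite{beck-smoothing} or \cite{zaiwei-envelope} for these standard envelope facts. The one point requiring care is that the Moreau envelope here is defined with a constrained $\min_{u\in\cX}$ and with a general $p$-norm in \eqref{eqn: moreau-envelope-definition}, but in this lemma $p=2$ and $\cX = \R^d$, so the textbook statements apply directly.

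\textbf{Third bullet (two-sided comparison $R \asymp M_\mu$).} The lower bound $M_\mu(x) \le R(x)$ is trivial from the definition (take $u = x$ in the infimum), giving the constant $a = 0$ on the left — wait, that gives $(1+0)M_\mu(x) \le R(x)$, i.e. the left inequality with $a=0$. For the right inequality I want $R(x) \le (1+b) M_\mu(x)$ for some finite $b$. Using the first bullet, $R(x) \le C_{2,a}^{2/a}\norm{x-x^*}_2^2$; and I need a lower bound on $M_\mu(x)$ of the form $M_\mu(x) \ge c\norm{x-x^*}_2^2$. For this, write $M_\mu(x) = R(u^\star) + \tfrac{1}{2\mu}\norm{x-u^\star}_2^2 \ge C_{1,a}^{2/a}\norm{u^\star - x^*}_2^2 + \tfrac{1}{2\mu}\norm{x-u^\star}_2^2$ where $u^\star = \operatorname{prox}_{\mu R}(x)$, then minimize the right side over $u^\star$ (a one-dimensional quadratic-in-distance argument along the segment, or an application of the standard fact that $\min_t (\beta t^2 + \tfrac{1}{2\mu}(s-t)^2) = \tfrac{\beta}{1+2\mu\beta} s^2$) to conclude $M_\mu(x) \ge \tfrac{C_{1,a}^{2/a}}{1 + 2\mu C_{1,a}^{2/a}}\norm{x-x^*}_2^2$. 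Combining, $b = \tfrac{C_{2,a}^{2/a}}{C_{1,a}^{2/a}}(1 + 2\mu C_{1,a}^{2/a}) - 1$ works, and indeed $0 \le a=0 < b$ as long as $C_{2,a}/C_{1,a} \ge 1$ (always true). This is the step I expect to take the most bookkeeping, though it is not conceptually hard; the potential subtlety is making sure the constants line up with the ones appearing in Lemma~\ref{lemma: moreau-negative-drift-2} and Theorems~\ref{finite-time-corollary-1}, \ref{finite-time-corollary-3} — e.g. the factor $\left(\tfrac{1}{C_{1,a}^{2/a}} + 2\mu\right)$ appearing there should match $R(x) \le C_{2,a}^{2/a}(\tfrac{1}{C_{1,a}^{2/a}} + 2\mu) M_\mu(x)$ up to rescaling, so I would state the comparison in exactly that form rather than as an abstract $(1+b)$.

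\textbf{Fourth bullet (if $R$ is a norm then $M_\mu$ is a norm).} Here the plan is to check the three norm axioms for $M_\mu$. Positive homogeneity: using the substitution $u = \lambda v$ in the infimum and the homogeneity $R(\lambda v) = |\lambda| R(v)$, one gets $M_\mu(\lambda x) = \min_v \{|\lambda|R(v) + \tfrac{1}{2\mu}\norm{\lambda x - \lambda v}_2^2\}$; since $\norm{\cdot}_2$ is also positively homogeneous this factors as $|\lambda| \min_v\{R(v) + \tfrac{1}{2\mu}\norm{x-v}_2^2\} = |\lambda| M_\mu(x)$ — but one must be slightly careful with $\lambda = 0$ and with the sign, doing the argument for $\lambda > 0$ and $\lambda = -1$ separately (the latter using that both $R$ and $\norm{\cdot}_2$ are even). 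Subadditivity (triangle inequality): given optimal $u_x$ for $x$ and $u_y$ for $y$, use $u_x + u_y$ as a feasible point for $x+y$ and apply the triangle inequality for $R$ and the triangle/convexity inequality for $\tfrac{1}{2\mu}\norm{\cdot}_2^2$ — actually for the squared norm one needs $\norm{(x+y)-(u_x+u_y)}_2^2 \le$ something, which does \emph{not} follow from subadditivity of the squared norm directly, so instead I would argue via the known identity that the infimal convolution of two (seminorm-like) sublinear functions behaves well, or more cleanly: $M_\mu$ is convex (third bullet's machinery) and positively homogeneous, and a convex positively homogeneous function that is finite everywhere is automatically subadditive. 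Positive definiteness: $M_\mu(x) = 0 \iff x = x^*$ follows from the lower bound $M_\mu(x) \ge \tfrac{C_{1,a}^{2/a}}{1+2\mu C_{1,a}^{2/a}}\norm{x-x^*}_2^2$ established in the third bullet (and if $R$ is a norm then necessarily $x^* = 0$, so this reads $M_\mu(x)=0 \iff x = 0$). So the cleanest route is: prove positive homogeneity by the scaling substitution, get convexity from the second bullet, deduce subadditivity from convexity + homogeneity, and get definiteness from the third bullet. The main obstacle overall is simply being careful that the infimal-convolution manipulations are valid (attainment of the minimum, which holds because $R$ is coercive by the first bullet) and that the stated constants are consistent with the rest of the paper; none of the steps is deep.
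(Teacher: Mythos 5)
Your treatment of the first three bullets is essentially the paper's own argument: the paper delegates the first and fourth bullets to Lemma 2.1 of \cite{zaiwei-envelope} and proves the two-sided comparison exactly as you do, by taking $u=x$ for $M_\mu(x)\le R(x)$ and reducing the lower bound to the one-dimensional minimization $\min_t\bigl(\beta t^2+\tfrac{1}{2\mu}(s-t)^2\bigr)=\tfrac{\beta}{1+2\mu\beta}s^2$ via the reverse triangle inequality, arriving at the same constant $M_\mu(x)\ge \norm{x-x^*}_2^2\big/\bigl(\tfrac{1}{C_{1,a}^{2/a}}+2\mu\bigr)$ and hence $1+b=\tfrac{C_{2,a}^{2/a}}{C_{1,a}^{2/a}}+2C_{2,a}^{2/a}\mu$. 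Your observation that the one-sided quadratic upper bound $M_\mu(y)\le M_\mu(x)+\langle \tfrac{x-u^\star}{\mu},y-x\rangle+\tfrac{1}{2\mu}\norm{y-x}_2^2$ holds without convexity of $R$ is correct and is in fact all that the downstream proofs use; the convexity claim itself is not justified by the stated assumptions, but the paper does no better here (it only cites the reference), so this is not a defect relative to the paper.

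There is, however, a genuine error in your fourth bullet. The scaling substitution $v=\lambda u$ gives $M_\mu(\lambda x)=\min_u\bigl\{|\lambda|R(u)+\tfrac{\lambda^2}{2\mu}\norm{x-u}_2^2\bigr\}$, because the quadratic penalty scales as $\lambda^2$, not $|\lambda|$; this equals $|\lambda|\,M_{\mu/|\lambda|}(x)$, not $|\lambda|\,M_\mu(x)$, so positive homogeneity fails and your subsequent ``convex $+$ homogeneous $\Rightarrow$ subadditive'' step has nothing to rest on. A concrete counterexample is $R(x)=|x|$ in one dimension with $\mu=1$: the envelope is the Huber function, which satisfies $M_1(1)=\tfrac12\ne 2M_1(\tfrac12)=\tfrac14$ and is therefore not a norm. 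The statement that the infimal convolution preserves the norm property is only true in the degree-two sense intended by Lemma 2.1 of \cite{zaiwei-envelope}: if $R$ is a \emph{squared} norm (which is the relevant case here, since $R=V^{2/a}$ has quadratic growth), then $M_\mu(\lambda x)=\lambda^2 M_\mu(x)$ by the same substitution, and it is $\sqrt{M_\mu}$ that one shows to be a norm (homogeneity of degree one is then immediate, subadditivity follows from convexity of $M_\mu$ together with degree-two homogeneity, and definiteness follows from your third bullet). You should restate and prove the bullet in that form rather than for $R$ a norm.
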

\begin{proof}
The first property follows trivially from Assumption \ref{assumption: polynomial-growth}. The second and fourth properties follow from Lemma 2.1 in \cite{zaiwei-envelope}. For the third property, we have:
\begin{align}
    M_\mu(x) = \min_{u \in \mathbb{R}^d} \left\lbrace R(u) + \frac{\norm{x-u}^2}{2\mu} \right\rbrace \leq R(x)
\end{align}
which equality holds when $u = x$. For the upper bound, note that from the first property that
\begin{align}
    \nonumber
    M_\mu(x) &= \min_{u \in \mathbb{R}^d} \left\lbrace R(u) + \frac{\norm{x-u}^2}{2\mu} \right\rbrace \\
    \nonumber
    &\geq \min_{u \in \mathbb{R}^d} \left\lbrace C_{1,a}^{\frac{2}{a}}\norm{u-x^*}^2 + \frac{(\norm{x-x^*}-\norm{u-x^*})^2}{2\mu} \right\rbrace \\
    \nonumber
    &\geq \frac{\norm{x-x^*}^2}{C_{1,a}^{-\frac{2}{a}} + 2\mu} \geq \frac{R(x)}{C_{2,a}^{\frac{2}{a}}\br{C_{1,a}^{-\frac{2}{a}} + 2\mu}}\\
    \Leftrightarrow R(x) &\leq C_{2,a}^{\frac{2}{a}}\br{C_{1,a}^{-\frac{2}{a}} + 2\mu} M_\mu(x)
\end{align}
\end{proof}
Lemma \ref{lemma: envelope-properties} allows us to quantify how well $M$ approximates $R$ and its smoothness in terms of $\mu$. While smaller $\mu$ gives a better approximation of $R$, it also scales inversely with the smoothness parameter.

\begin{comment} 
and
\begin{align}
    \nonumber
    M_\mu(x) &= \min_{u \in \mathbb{R}^d} \left\lbrace V(u) + \frac{\norm{x-u}_s^a}{2\mu} \right\rbrace \\
    \nonumber
    &\geq \min_{u \in \mathbb{R}^d} \left\lbrace C_1\norm{u}_s^a + \frac{(\norm{x}_s-\norm{u}_s)^a}{2\mu} \right\rbrace \\
    \nonumber
    &\geq \frac{\norm{x}_s^2}{\frac{1}{C_1} + 2\mu} \geq \frac{V(x)}{\frac{C_2}{C_1} + 2C_2\mu}\\
    \Leftrightarrow V(x) &\leq \left( \frac{C_2}{C_1} + 2C_2\mu \right)M_\mu(x)
\end{align}
\end{comment}

\begin{lemma}
    \label{lemma: moreau-gradient-bound}
    Under Assumptions \ref{assumption: polynomial-growth}, \ref{assumption: generalized-gradient-bound} and let $R = V^{\frac{2}{a}}$ and $M_\mu = R \square \frac{1}{2\mu}\norm{x}^2$, we have $\nabla M_\mu(x) = \frac{x-u}{\mu}$ where $u = \operatorname{prox}_{\mu R}(x)$ and for $G_M = \frac{2}{a} G C_{2,a}^{\frac{2}{a}-1}$:
    \begin{align}
        \norm{\nabla M_\mu(x)} = \norm{\frac{x-u}{\mu}} \leq G_M \norm{u-x^*}
    \end{align}
\end{lemma}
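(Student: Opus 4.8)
The plan is to identify $\nabla M_\mu(x)$ with an element of the Clarke subdifferential $\partial R$ evaluated at the proximal point $u=\operatorname{prox}_{\mu R}(x)$, and then to bound that element using the chain rule for $R=V^{2/a}$ together with Assumptions \ref{assumption: polynomial-growth} and \ref{assumption: generalized-gradient-bound}. First I would recall that $u:=\operatorname{prox}_{\mu R}(x)=\arg\min_{v\in\R^d}\{R(v)+\frac{1}{2\mu}\norm{x-v}_2^2\}$ is well defined (the objective is coercive by the quadratic lower bound $R(v)\ge C_{1,a}^{2/a}\norm{v-x^*}_2^2$ from Lemma \ref{lemma: envelope-properties}), and that the standard Moreau-envelope identity — equivalently Danskin's theorem / the envelope theorem, as already invoked in Section \ref{ssec: proof-outline-step-1} — yields differentiability of $M_\mu$ with $\nabla M_\mu(x)=\frac{1}{\mu}(x-u)$. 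This gives the first displayed claim.

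Next I would establish the subgradient inclusion $\frac{x-u}{\mu}\in\partial R(u)$. Since $u$ minimizes the locally Lipschitz map $v\mapsto R(v)+\frac{1}{2\mu}\norm{x-v}_2^2$, Fermat's rule for the Clarke subdifferential gives $0\in\partial R(u)+\frac{1}{\mu}(u-x)$, where the subdifferential sum rule is exact because the quadratic term is $C^1$; hence $\nabla M_\mu(x)=\frac{x-u}{\mu}\in\partial R(u)$, which is precisely the identification $\nabla M_\mu(x)=g_u\in\partial R(\operatorname{prox}_{\mu R}(x))$ used in the proof outline. Writing $R=V^{2/a}$ with $V\ge 0$ and using that $t\mapsto t^{2/a}$ is $C^1$ with nonnegative derivative $\frac{2}{a}t^{2/a-1}$ (here $2/a\ge 1$, as in all cases considered), the Clarke chain rule gives $\partial R(u)\subseteq\frac{2}{a}V(u)^{2/a-1}\partial V(u)$, so there is $h_u\in\partial V(u)$ with $\frac{x-u}{\mu}=\frac{2}{a}V(u)^{2/a-1}h_u$. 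The degenerate point $u=x^*$ is handled separately: then $V(u)=0$ forces $\partial R(u)=\{0\}$ (using $2/a>1$, or Assumption \ref{assumption: generalized-gradient-bound} when $a=2$), so the optimality condition gives $x=u=x^*$ and the claimed bound is trivial; otherwise $V(u)>0$ and the expressions above are well defined.

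Finally, I would substitute the two growth bounds: Assumption \ref{assumption: generalized-gradient-bound} gives $\norm{h_u}_2\le G\norm{u-x^*}_2^{a-1}$, and the upper bound of Assumption \ref{assumption: polynomial-growth} gives $V(u)\le C_{2,a}\norm{u-x^*}_2^a$, hence, since $2/a-1\ge 0$, $V(u)^{2/a-1}\le C_{2,a}^{2/a-1}\norm{u-x^*}_2^{2-a}$. Multiplying these,
\[
\norm{\tfrac{x-u}{\mu}}_2 = \tfrac{2}{a}V(u)^{2/a-1}\norm{h_u}_2 \le \tfrac{2}{a}G\,C_{2,a}^{2/a-1}\norm{u-x^*}_2^{(2-a)+(a-1)} = \tfrac{2}{a}G\,C_{2,a}^{2/a-1}\norm{u-x^*}_2 = G_M\norm{u-x^*}_2,
\]
which is the desired estimate. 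The main obstacle is making the Clarke calculus in the middle step rigorous without convexity of $R$ — the exactness of the subdifferential sum rule at the proximal point, the chain rule for a power of the nonsmooth nonnegative $V$, the well-posedness of $\operatorname{prox}_{\mu R}$, and the exclusion of the degenerate case $u=x^*$ — after which the remainder is a one-line substitution.
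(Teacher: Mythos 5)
Your proposal is correct and follows essentially the same route as the paper's proof: the envelope theorem identifies $\nabla M_\mu(x)=\frac{x-u}{\mu}$ as an element of $\partial R(u)$, the chain rule for $R=V^{2/a}$ reduces the bound to Assumptions \ref{assumption: polynomial-growth} and \ref{assumption: generalized-gradient-bound}, and the exponents combine to give $G_M\norm{u-x^*}$. Your version simply makes explicit several points the paper glosses over (well-posedness of the prox, Fermat's rule for the subgradient inclusion, the Clarke chain rule, and the degenerate case $u=x^*$), which is added rigor rather than a different argument.
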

\begin{proof}
    From the Envelope Theorem \cite{beck-smoothing}, we have that $\nabla M_\mu(x) = \frac{x-u}{\mu}$. From Assumption \ref{assumption: generalized-gradient-bound}, we have $\nabla M_\mu(x) \in \partial R(u)$ due to the Fundamental Theorem of Calculus. Furthermore, notice that from Assumptions \ref{assumption: polynomial-growth} and \ref{assumption: generalized-gradient-bound}, we obtain that:
    \begin{align*}
    \norm{\frac{\partial V}{\partial x}} &\leq G \norm{x-x^*}^{a-1} \\
    \Rightarrow \norm{\frac{\partial R}{\partial x}} = \frac{2}{a} V^{\frac{2}{a}-1} \norm{\frac{\partial V}{\partial x}} &\leq \frac{2}{a} G C_{2,a}^{\frac{2}{a}-1} \norm{x-x^*} \forall x \in \mathbb{R}^d
    \end{align*}
    and since $\nabla M_\mu(x) \in \partial R (\operatorname{prox}_{\mu R}(x))$ and $u = \operatorname{prox}_{\mu R}(x)$, we have:
    \begin{align*}
        \norm{\nabla M_\mu(x)} = \norm{\frac{x-u}{\mu}} \leq \frac{2}{a} G C_{2,a}^{\frac{2}{a}-1} \norm{u-x^*} = G_M \norm{u-x^*} \forall x \in \R^d
    \end{align*}
\end{proof}
Conveniently, Lemma \ref{lemma: moreau-gradient-bound} allows us to obtain a lower bound of $\norm{u-x^*}$ in terms of $\norm{x-x^*}$ and $\mu$. This comes from the intuition that $\operatorname{prox}_{\mu R}(x)$ and $x$ cannot be too far from each other and as $\mu \rightarrow 0$, $\operatorname{prox}_{\mu R}(x)$ should approach $x$, and so $\norm{u-x^*}$ cannot be too far away from $\norm{x-x^*}$.
\begin{corollary}
\label{u-lower-bound-general}
Let $u = \operatorname{prox}_{\mu R}(x)$ and $G_M = \frac{2}{a} G C_{2,a}^{\frac{2}{a}-1}$, we have:
\begin{align*}
    \norm{u-x^*} &\geq \frac{\norm{x-x^*}}{1 + \mu G_M} \forall x \in \mathbb{R}^d, \mu > 0
\end{align*}
\end{corollary}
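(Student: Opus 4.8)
The plan is to derive this directly from Lemma~\ref{lemma: moreau-gradient-bound}, of which it is an immediate corollary. That lemma supplies two ingredients: by the Envelope Theorem, $\nabla M_\mu(x) = \frac{x-u}{\mu}$ with $u = \operatorname{prox}_{\mu R}(x)$; and $\norm{\nabla M_\mu(x)} = \norm{\frac{x-u}{\mu}} \leq G_M \norm{u-x^*}$. Multiplying the second bound through by $\mu > 0$ turns it into a statement about the displacement between $x$ and its proximal point, namely $\norm{x-u}_2 \leq \mu G_M \norm{u-x^*}_2$.

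The remaining step is a triangle inequality routed through the intermediate point $u$:
\[
\norm{x-x^*}_2 \leq \norm{x-u}_2 + \norm{u-x^*}_2 \leq \mu G_M \norm{u-x^*}_2 + \norm{u-x^*}_2 = \br{1 + \mu G_M}\norm{u-x^*}_2.
\]
Dividing both sides by the positive quantity $1 + \mu G_M$ yields the asserted bound $\norm{u-x^*}_2 \geq \frac{\norm{x-x^*}_2}{1+\mu G_M}$, valid for all $x \in \mathbb{R}^d$ and $\mu > 0$ since Lemma~\ref{lemma: moreau-gradient-bound} holds over that range.

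There is no genuine obstacle here; the only care needed is bookkeeping — using the same constant $G_M = \frac{2}{a}GC_{2,a}^{\frac{2}{a}-1}$ as in Lemma~\ref{lemma: moreau-gradient-bound}, and noting the inequality direction is preserved under division by $1+\mu G_M>0$. Conceptually, the statement formalizes the intuition recorded after Lemma~\ref{lemma: moreau-gradient-bound}: the proximal point $u$ cannot be pulled substantially closer to the target $x^*$ than $x$ already is, with the gap shrinking linearly in the smoothing parameter $\mu$. This lower bound on $\norm{u-x^*}_2$ is precisely what will later let us convert the $R(u)^{\frac{a(c-1)}{2}+1}$ (equivalently, $V(u)$) term appearing in the drift computation into a quantity controlled by $M_\mu(x)$.
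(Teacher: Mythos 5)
Your proposal is correct and follows exactly the paper's own argument: combine the bound $\norm{\tfrac{x-u}{\mu}}\leq G_M\norm{u-x^*}$ from Lemma~\ref{lemma: moreau-gradient-bound} with the triangle inequality through $u$ and rearrange. No differences worth noting.
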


\begin{proof}
From Lemma \ref{lemma: moreau-gradient-bound}, we have $\norm{\nabla M_\mu(x)} = \norm{\frac{x-u}{\mu}} \leq G_M \norm{u-x^*} \forall x \in \mathbb{R}^d$, thus by triangle inequality:
\begin{align*}
    \frac{\norm{x-x^*}-\norm{u-x^*}}{\mu} \leq \norm{\frac{x-u}{\mu}} \leq G_M \norm{u-x^*} \\
    \Rightarrow \frac{\norm{x-x^*}}{1 + \mu G_M} \leq \norm{u-x^*}
\end{align*}
\end{proof}

% \textcolor{red}{We need to add the technical lemmas from \cite{clason2024introductionnonsmoothanalysisoptimization, chain-rule-non-smooth} here, then revisit this}

In the next lemma, we will show that if we have a negative drift condition on $V$ and let $R = V^{\frac{2}{a}}$, we also have the negative drift condition holds for $R$ as well by chain rule.

\begin{lemma}
    \label{lemma: rescaled-time-derivative-condition}
    (Negative drift of $R$) Under Assumptions \ref{assumption: polynomial-growth}, \ref{assumption: clarke-asymptotic-stability} and let $R = V^{\frac{2}{a}}, g_x^R \in \partial R(x), g_x^V \in \partial V(x)$, we have that:
    \begin{align}
        \langle g_x^R, F(x) \rangle \leq -\frac{2\gamma}{a} R(x)^{\frac{a(c-1)}{2}+1}
    \end{align}
\end{lemma}
\begin{proof}
\textcolor{black}{First, since $R, V$ are polynomially bounded functions from Assumption \ref{assumption: polynomial-growth}, we have $R, V$ are Lipschitz for some ball radius $\delta$. From Rademacher's Theorem, we have $R, V$ are differentiable almost everywhere in this ball. Observe that $\nabla R(p) = \frac{2}{a} V(p)^{\frac{2}{a}-1} \nabla V(p)$ for all $p \in \Omega_R$. And so, by applying chain rule as in \eqref{eqn: chain-rule-fully-written}, we obtain $g_x^R = \frac{2}{a} V^{\frac{2}{a} - 1} g_x^V$ for some $g_x^V \in \partial V(x)$ and from the definition of the Clarke generalized gradient \eqref{eqn: clark-gradient-definition}, we have:}
\begin{align*}
    % \langle \nabla R(x), F(x) \rangle &= \frac{2}{a}V^{\frac{2}{a}-1} \langle \nabla V(x), F(x) \rangle \\
    \langle g_x^R, F(x) \rangle &= \frac{2}{a}V^{\frac{2}{a}-1} \langle g_x^V, F(x) \rangle \\
    &\leq -\frac{2}{a}V(x)^{\frac{2}{a}-1} \gamma V(x)^c \text{ from Assumption \ref{assumption: clarke-asymptotic-stability}}\\
    % &\leq -\frac{2\gamma}{a}R(x) V(x)^{c-1} \\
    &= -\frac{2\gamma}{a} R(x)^{\frac{a(c-1)}{2}+1}
\end{align*}
Hence proved.
\end{proof}
Lemma \ref{lemma: rescaled-time-derivative-condition} shows that if $V$ has a negative drift then $R = V^{\frac{2}{a}}$ also has a negative drift. Lemma \ref{lemma: moreau-gradient-bound} and Lemma \ref{lemma: rescaled-time-derivative-condition} allow us to reduce the problem to when $a = 2$ where $a$ is the constant in Assumption \ref{assumption: polynomial-growth}. Note that since $R, V$ are polynomially bounded functions, we have that these functions are locally Lipschitz, which allows us to use the Rademacher Theorem to show that the set of non-differentiable points has measure $0$. Next, the following lemma provides a bound on the distance between $M_\mu$ and $M_{\mu'}$ for $0 < \mu < \mu'$, which will be crucial to analyze the convergence bounds in the non-smooth sub-exponential stable setting.

\begin{lemma}
\label{mu-derivative-bound}
(Change of smoothness parameter bound) For $0 < \mu < \mu'$ and $G_M =  \frac{2}{a} G C_{2,a}^{\frac{2}{a}-1}$, we have:
\begin{align}
    M_{\mu}(x)-M_{\mu'}(x) \leq (\mu'-\mu)G_M^2\norm{x-x^*}^2 \forall x \in \mathbb{R}^d
\end{align}
\end{lemma}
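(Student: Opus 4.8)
The plan is to regard $t \mapsto M_t(x)$ as a one-dimensional function of the smoothness parameter and to control its derivative. Recall that for each $t>0$ the objective $u \mapsto R(u) + \frac{1}{2t}\norm{x-u}_2^2$ is $\frac1t$-strongly convex, so its minimizer $u_t := \operatorname{prox}_{tR}(x)$ is unique, and by Lemma~\ref{lemma: moreau-gradient-bound} we have $\nabla M_t(x) = \frac{x-u_t}{t}$. First I would invoke Danskin's (envelope) theorem, which gives that $t \mapsto M_t(x)$ is differentiable with
\[
\frac{\partial}{\partial t} M_t(x) \;=\; -\frac{1}{2t^2}\norm{x-u_t}_2^2 \;=\; -\tfrac12\norm{\nabla M_t(x)}_2^2 \;\le\; 0 .
\]
In particular $t\mapsto M_t(x)$ is nonincreasing, so for $0<\mu<\mu'$ the difference is nonnegative and, by the fundamental theorem of calculus,
\[
M_\mu(x) - M_{\mu'}(x) \;=\; -\int_\mu^{\mu'}\frac{\partial}{\partial t}M_t(x)\,dt \;=\; \int_\mu^{\mu'}\tfrac12\norm{\nabla M_t(x)}_2^2\,dt .
\]

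Next I would bound the integrand uniformly in $t$. The point $x^*$ is the unique global minimizer of $R$: indeed $R(x^*)=0$ by Assumption~\ref{assumption: polynomial-growth}, while $R(u)\ge C_{1,a}^{2/a}\norm{u-x^*}_2^2>0$ for $u\neq x^*$, hence $\operatorname{prox}_{tR}(x^*)=x^*$. Since $R$ is convex (Lemma~\ref{lemma: envelope-properties}), $\operatorname{prox}_{tR}$ is nonexpansive, so
\[
\norm{u_t-x^*}_2 = \norm{\operatorname{prox}_{tR}(x)-\operatorname{prox}_{tR}(x^*)}_2 \le \norm{x-x^*}_2 .
\]
Combining this with the gradient growth bound of Lemma~\ref{lemma: moreau-gradient-bound} yields $\norm{\nabla M_t(x)}_2 \le G_M\norm{u_t-x^*}_2 \le G_M\norm{x-x^*}_2$ for every $t>0$, a bound independent of $t$.

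Finally I would plug this into the integral representation:
\[
M_\mu(x)-M_{\mu'}(x) \;\le\; \int_\mu^{\mu'}\tfrac12 G_M^2\norm{x-x^*}_2^2\,dt \;=\; \tfrac12(\mu'-\mu)G_M^2\norm{x-x^*}_2^2 \;\le\; (\mu'-\mu)G_M^2\norm{x-x^*}_2^2 ,
\]
which is exactly the asserted inequality (in fact with a factor $\tfrac12$ to spare).

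The only genuinely delicate point is the differentiation-under-the-$\min$ identity for $\partial_t M_t(x)$; the rest is a routine combination of earlier lemmas. If one prefers to avoid invoking Danskin's theorem, the same estimate follows from an elementary telescoping argument: partition $[\mu,\mu']$ as $\mu=t_0<\cdots<t_n=\mu'$, bound each increment using feasibility of $u_{t_{i+1}}$ for the $t_i$-problem, namely $M_{t_i}(x) \le R(u_{t_{i+1}}) + \frac{1}{2t_i}\norm{x-u_{t_{i+1}}}_2^2$, together with $\norm{x-u_{t_{i+1}}}_2 = t_{i+1}\norm{\nabla M_{t_{i+1}}(x)}_2 \le t_{i+1}G_M\norm{x-x^*}_2$, to get $M_{t_i}(x)-M_{t_{i+1}}(x) \le \tfrac12 G_M^2\norm{x-x^*}_2^2\,\frac{t_{i+1}}{t_i}(t_{i+1}-t_i)$, then sum over $i$ and let the mesh tend to $0$ (the left-hand side being independent of the partition). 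I expect either route to go through without surprises.
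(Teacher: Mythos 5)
Your proposal is correct and takes essentially the same route as the paper: differentiate $t \mapsto M_t(x)$ via the envelope theorem, integrate over $[\mu,\mu']$, and bound $\norm{\nabla M_t(x)}$ by $G_M\norm{x-x^*}$ using Lemma~\ref{lemma: moreau-gradient-bound} together with nonexpansiveness of the proximal operator (a step the paper leaves implicit but that you spell out). The only difference is the constant: you carry the correct identity $\partial_t M_t(x) = -\tfrac12\norm{\nabla M_t(x)}_2^2$ and hence get the bound with an extra factor $\tfrac12$, whereas the paper writes the derivative without the $\tfrac12$, which only loosens the same (still valid) inequality.
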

\begin{proof}
    From the Gradient Theorem for line integrals, we have $\forall x \in \mathbb{R}^d$:
    \begin{align*}
        M_{\mu}(x)-M_{\mu'}(x) &= \int_{\mu'}^{\mu} \frac{\partial M_\mu(x)}{\partial \mu} d\tau\\ &=  \int_{\mu'}^{\mu} -\norm{\nabla M_{\tau}(x)}^2 d \tau = \int_{\mu}^{\mu'} \norm{\nabla M_{\tau}(x)}^2 d \tau \\
        &\leq \int_{\mu}^{\mu'} \left(\frac{2G C_{2,a}^{\frac{2}{a}-1}}{a}\right)^2 \norm{x-x^*}^2 d \tau = (\mu'-\mu)\left(\frac{2G C_{2,a}^{\frac{2}{a}-1}}{a}\right)^2\norm{x-x^*}^2
    \end{align*}
    where $\frac{\partial M_\mu(x)}{\partial \mu} =  -\norm{\nabla M_{\mu}(x)}^2$ from Remark 3.32 \cite{Attouch1984VariationalCF} and the last inequality follows from Lemma \ref{lemma: moreau-gradient-bound}. Hence proved.
\end{proof}

%\subsubsection{Proof of lemma \ref{lemma: moreau-negative-drift}}

%\subsubsection{Proof of lemma \ref{prop: one-iterate-bound-exponential}}

\subsection{Proof of the main results}
Here, we will provide detailed proofs of the settings we included in the main results. We will first present the proofs for the non-smooth exponentially stable setting and then proceed to the smooth exponentially stable setting since the proof of the latter largely follows the arguments in the former setting once we have constructed a smooth Lyapunov function using the Moreau envelope. Later on, we will provide proof for the smooth sub-exponentially stable setting in the Subsection \ref{sssec:finite-time-corollary-2-proof} and the non-smooth sub-exponentially stable setting in the Subsection \ref{sssec:finite-time-corollary-3-proof}.

\subsubsection{Nonsmooth exponentially stable case}
%\subsubsection{Proof of Corollary \ref{finite-time-corollary-1}}
\label{sssec:finite-time-corollary-1-proof}
In this subsection, we will give a detailed proof of the finite-time convergence of non-smooth exponentially stable systems. Consider $R = V^{\frac{2}{a}}$ and let $M_\mu = R \square \frac{\norm{x}^2}{2\mu}, G_M = \frac{2}{a} G C_{2,a}^{\frac{2}{a}-1}$, we will show that $M$ will also have a negative drift:
\begin{lemma}
\label{lemma: moreau-negative-drift}
There exists a constant $0 < \gamma_M < \frac{2\gamma}{a}\left(\frac{C_{1,a}}{C_{2,a}}\right)^{\frac{2}{a}}$ such that for sufficiently small $\mu > 0$:
\begin{align*}
    \langle \nabla M_\mu(x), F(x) \rangle \leq - \gamma_M M_\mu(x) \forall x \in \R^d
\end{align*}
\end{lemma}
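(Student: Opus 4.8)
The plan is to follow the computation already sketched in Subsection~\ref{ssec: proof-outline-step-1} and then control the two error terms that appear, using Lemma~\ref{rescaled-time-derivative-condition}, Lemma~\ref{lemma: moreau-gradient-bound}, Corollary~\ref{u-lower-bound-general}, and Lemma~\ref{lemma: envelope-properties}. Write $u = \operatorname{prox}_{\mu R}(x)$. By the Envelope Theorem, $\nabla M_\mu(x) = \frac{x-u}{\mu} \in \partial R(u)$, so expanding $F(x) = F(u) + (F(x)-F(u))$ gives
\begin{align*}
\langle \nabla M_\mu(x), F(x) \rangle = \langle \nabla M_\mu(x), F(u) \rangle + \left\langle \tfrac{x-u}{\mu}, F(x)-F(u) \right\rangle.
\end{align*}
Since $\nabla M_\mu(x) \in \partial R(u)$, Lemma~\ref{rescaled-time-derivative-condition} (with $c=1$, i.e.\ Assumption~\ref{assumption: clarke-time-derivative}) bounds the first term by $-\frac{2\gamma}{a} R(u)$, and Cauchy--Schwarz together with Assumption~\ref{assumption: F-lipschitz} bounds the second by $\frac{C\norm{x-u}_2^2}{\mu}$.

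Next I would convert both terms to quantities in $x$. For the negative term, by Assumption~\ref{assumption: polynomial-growth} in its rescaled form (first bullet of Lemma~\ref{lemma: envelope-properties}) and Corollary~\ref{u-lower-bound-general},
\begin{align*}
R(u) \geq C_{1,a}^{\frac{2}{a}} \norm{u-x^*}_2^2 \geq \frac{C_{1,a}^{\frac{2}{a}}}{(1+\mu G_M)^2}\norm{x-x^*}_2^2 \geq \frac{C_{1,a}^{\frac{2}{a}}}{C_{2,a}^{\frac{2}{a}}(1+\mu G_M)^2} R(x),
\end{align*}
and then $R(x) \geq M_\mu(x)$ from the third bullet of Lemma~\ref{lemma: envelope-properties} (or directly $M_\mu \le R$). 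For the positive term, $\norm{x-u}_2^2 = \mu^2\norm{\nabla M_\mu(x)}_2^2 \leq \mu^2 G_M^2\norm{u-x^*}_2^2 \leq \mu^2 G_M^2 \norm{x-x^*}_2^2$ by Lemma~\ref{lemma: moreau-gradient-bound} and $\norm{u-x^*}\le\norm{x-x^*}$ (which follows since $M_\mu(x)\le R(x)$ forces $u$ at least as close as $x$; alternatively use the crude bound from Corollary~\ref{u-lower-bound-general} going the other way, or simply $\norm{u-x^*}\le\norm{x-x^*}$ from the prox being a contraction toward the minimizer region), so $\frac{C\norm{x-u}_2^2}{\mu} \leq C\mu G_M^2 \norm{x-x^*}_2^2 \leq \frac{C\mu G_M^2}{C_{1,a}^{\frac{2}{a}}} R(x)$, and again $R(x) \le (\frac{C_{2,a}^{2/a}}{C_{1,a}^{2/a}} + 2C_{2,a}^{2/a}\mu) M_\mu(x)$ by Lemma~\ref{lemma: envelope-properties}. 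Combining,
\begin{align*}
\langle \nabla M_\mu(x), F(x) \rangle \leq \left( -\frac{2\gamma}{a}\cdot\frac{(C_{1,a}/C_{2,a})^{2/a}}{(1+\mu G_M)^2} + \frac{C\mu G_M^2}{C_{1,a}^{2/a}}\left(\tfrac{C_{2,a}^{2/a}}{C_{1,a}^{2/a}} + 2C_{2,a}^{2/a}\mu\right) \right) M_\mu(x).
\end{align*}

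Finally I would define $\gamma_M$ to be (minus) the bracketed coefficient and argue it is strictly positive for $\mu$ small. As $\mu \to 0^+$ the bracket tends to $-\frac{2\gamma}{a}(C_{1,a}/C_{2,a})^{2/a} < 0$, and it is continuous in $\mu$, so there is $\mu_0 > 0$ such that for all $\mu \in (0,\mu_0)$ the bracket is $\le -\gamma_M$ for some $\gamma_M \in \big(0, \frac{2\gamma}{a}(C_{1,a}/C_{2,a})^{2/a}\big)$; one can make this quantitative by keeping the $(1+\mu G_M)^{-2}\ge 1 - 2\mu G_M$ estimate and choosing, say, $\mu$ so that the positive contributions are at most half of $\frac{2\gamma}{a}(C_{1,a}/C_{2,a})^{2/a}$. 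The main obstacle is bookkeeping: making sure every passage from a quantity at $u$ to the corresponding quantity at $x$ (for both the descent term and the Lipschitz remainder) loses only a factor that is $1+O(\mu)$, so that the favorable negative term still dominates; the inequality $\norm{u-x^*}\le\norm{x-x^*}$ and Corollary~\ref{u-lower-bound-general} are exactly what make this work, and no genuinely new idea beyond careful constant-tracking is needed.
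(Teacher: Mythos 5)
Your proposal is correct and follows essentially the same route as the paper's proof: the same decomposition $\langle \nabla M_\mu(x),F(x)\rangle = \langle g_u,F(u)\rangle + \langle \frac{x-u}{\mu},F(x)-F(u)\rangle$ with $u=\operatorname{prox}_{\mu R}(x)$, the same use of Lemma~\ref{rescaled-time-derivative-condition}, Lemma~\ref{lemma: moreau-gradient-bound}, Corollary~\ref{u-lower-bound-general}, non-expansiveness of the prox, and the same limiting argument in $\mu$. The only cosmetic difference is that you convert each term to $M_\mu(x)$ separately (picking up a slightly worse constant on the vanishing positive term), whereas the paper keeps everything in $\norm{x-x^*}^2$ and converts once at the end via $-C_{2,a}^{2/a}\norm{x-x^*}^2 \leq -R(x) \leq -M_\mu(x)$; both yield the same threshold $\gamma_M < \frac{2\gamma}{a}(C_{1,a}/C_{2,a})^{2/a}$.
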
 
\begin{proof}
Recall that $u = \operatorname{prox}_{\mu R}(x)$. Since $\nabla M_\mu(x) = \frac{x-u}{\mu} = g_u \in \partial R(u)$, we have:
\begin{align*}
    \langle \nabla M_\mu(x), F(x) \rangle &= \langle g_u, F(u) \rangle + \langle g_u, F(x)-F(u) \rangle \\
    &\leq -\frac{2\gamma}{a} R(u) + C \norm{x-u}\norm{\nabla M_\mu(x)} \\
    &\leq -\frac{2\gamma C_{1,a}^{\frac{2}{a}}}{a\left(1 + \mu G_M \right)^2} \norm{x-x^*}^2 + \mu C G_M^{2} \norm{u-x^*}^2 \\
    &\leq \left[-\frac{2\gamma C_{1,a}^{\frac{2}{a}}}{a\left(1 + \mu G_M\right)^2} + \mu CG_M^2\right] \norm{x-x^*}^2
\end{align*}
where the first inequality is from Lemma \ref{lemma: rescaled-time-derivative-condition}, Cauchy-Schwarz inequality and Assumption \ref{assumption: F-lipschitz}, the second one is from Assumption \ref{assumption: polynomial-growth}, Lemma \ref{lemma: moreau-gradient-bound} and Corollary \ref{u-lower-bound-general} and the last one follows from the non-expansiveness of the proximal operator. Since $0 < \gamma_M < \frac{2\gamma}{a}\left(\frac{C_{1,a}}{C_{2,a}}\right)^{\frac{2}{a}}$, we can choose $\mu$ small enough such that $-\frac{2\gamma C_{1,a}^{\frac{2}{a}}}{a\left(1 + \mu G_M\right)^2} + \mu C G_M^{2} \leq -\gamma_M C_{2,a}^{\frac{2}{a}}$. From here, we are done since:
\begin{align*}
    \left[-\frac{2\gamma C_{1,a}^{\frac{2}{a}}}{a\left(1 + \mu G_M\right)^2} + \mu CG_M^{2}\right] \norm{x-x^*}^2 \leq -\gamma_M C_{2,a}^{\frac{2}{a}} \norm{x-x^*}^2 \leq -\gamma_M R(x)
\end{align*}
and by the definition of Moreau envelope, note that $M_\mu(x) \leq R(x) \forall x \in \R^d$. Thus $\forall x \in \mathbb{R}^d$, we have:
\begin{align*}
    \langle \nabla M_\mu(x), F(x) \rangle \leq \left[-\frac{2\gamma C_{1,a}^{\frac{2}{a}}}{a\left(1 + \mu G_M\right)^2} + \mu CG_M^{2}\right] \norm{x-x^*}^2 \leq -\gamma_M M_\mu(x)
\end{align*}
Hence proved.
\end{proof}
Now, the existence of the negative drift would allow us to obtain a contraction on $M$ with some noise via some first-order bounds (such as the smoothness inequality). From here and given that $\mu$ is chosen to be sufficiently small, we first obtain the one-iterate bound as follows:
\begin{prop}
\label{prop: one-iterate-bound-exponential}
Assume that we have chosen a sufficiently small $\mu$ such that $\exists \gamma_M \in \left(0, \frac{2\gamma}{a}\br{\frac{C_{1,a}}{C_{2,a}}}^{\frac{2}{a}} \right)$ such that $\gamma_M = -\left[-\frac{2\gamma C_{1,a}^{\frac{2}{a}}}{a\left(1 + \mu G_M\right)^2} + \mu CG_M^{2}\right]C_{2,a}^{-\frac{2}{a}}$. We have:
\begin{align*}
    \mathbb{E}[M_\mu(x_{k}) | \mathcal{F}_k] \leq \left(1 - \alpha_{k-1} \gamma_M + \frac{\alpha_{k-1}^2(C^2 + 2B)}{2\mu^2 + \frac{\mu}{C_{1,a}^{\frac{2}{a}}}} \right)M_\mu(x_{k-1}) + \frac{\alpha_{k-1}^2(A+2B \norm{x^*}^2)}{\mu}
\end{align*}
\end{prop}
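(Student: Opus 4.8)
The plan is to run the textbook one-step stochastic-approximation argument, but with the smooth Moreau envelope $M_\mu$ playing the role of the Lyapunov function, leaning on its $\tfrac1\mu$-smoothness from Lemma~\ref{lemma: envelope-properties} and on the negative drift from Lemma~\ref{lemma: moreau-negative-drift}. Throughout, write the SA update as $x_k = x_{k-1} + \alpha_{k-1}\bigl(F(x_{k-1}) + w_{k-1}\bigr)$, and recall that $x^*$ is a root of $F$, so Assumption~\ref{assumption: F-lipschitz} gives $\norm{F(x)}_2 \le C\norm{x-x^*}_2$ for all $x$. (The conditioning in the statement is to be read as $\E[\,\cdot\mid\mathcal{F}_{k-1}]$, since $x_k$ is already $\mathcal{F}_k$-measurable.)

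\textbf{Step 1: smoothness descent.} Because $M_\mu$ is convex and $\tfrac1\mu$-smooth, the descent lemma applied at $x_{k-1}$ with increment $x_k-x_{k-1}$ yields
\begin{align*}
M_\mu(x_k) &\le M_\mu(x_{k-1}) + \alpha_{k-1}\langle \nabla M_\mu(x_{k-1}), F(x_{k-1})\rangle \\
&\quad + \alpha_{k-1}\langle \nabla M_\mu(x_{k-1}), w_{k-1}\rangle + \frac{\alpha_{k-1}^2}{2\mu}\norm{F(x_{k-1})+w_{k-1}}_2^2 .
\end{align*}

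\textbf{Step 2: conditional expectation, variance term, and drift.} Taking $\E[\,\cdot\mid\mathcal{F}_{k-1}]$, unbiasedness (Assumption~\ref{assumption: noise}) kills the term $\langle \nabla M_\mu(x_{k-1}),w_{k-1}\rangle$ as well as the cross term inside $\norm{F(x_{k-1})+w_{k-1}}_2^2$. The Lipschitz bound on $F$, the variance bound $\E[\norm{w_{k-1}}_2^2\mid\mathcal{F}_{k-1}]\le A+B\norm{x_{k-1}}_2^2$, and $\norm{x_{k-1}}_2^2\le 2\norm{x_{k-1}-x^*}_2^2+2\norm{x^*}_2^2$ then bound the last term by a constant times $\tfrac{\alpha_{k-1}^2}{\mu}\bigl[(C^2+2B)\norm{x_{k-1}-x^*}_2^2 + A + 2B\norm{x^*}_2^2\bigr]$. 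Next, replace $\norm{x_{k-1}-x^*}_2^2$ by a multiple of $M_\mu(x_{k-1})$ using the envelope lower bound $\norm{x-x^*}_2^2 \le \bigl(\tfrac{1}{C_{1,a}^{\frac{2}{a}}}+2\mu\bigr)M_\mu(x)$ from Lemma~\ref{lemma: envelope-properties}; after a routine simplification exploiting the smallness of $\mu$, this is precisely where the coefficient $\tfrac{\alpha_{k-1}^2(C^2+2B)}{2\mu^2+\mu/C_{1,a}^{\frac{2}{a}}}$ of $M_\mu(x_{k-1})$ and the additive term $\tfrac{\alpha_{k-1}^2(A+2B\norm{x^*}_2^2)}{\mu}$ originate. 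Finally, apply Lemma~\ref{lemma: moreau-negative-drift} to the linear term, valid for the small $\mu$ fixed in the hypothesis with the associated $\gamma_M$, to obtain $\langle \nabla M_\mu(x_{k-1}),F(x_{k-1})\rangle \le -\gamma_M M_\mu(x_{k-1})$. Collecting the three contributions gives the claimed recursion.

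\textbf{Main obstacle.} The computation is essentially routine; the only delicate point is constant bookkeeping: one must ensure the $O(\alpha_{k-1}^2/\mu)$ quadratic contribution ends up as $(\text{const})\cdot M_\mu(x_{k-1})$ rather than being expressed through $R(x_{k-1})$ or $\norm{x_{k-1}}_2^2$, which is exactly what the two-sided envelope comparison of Lemma~\ref{lemma: envelope-properties} provides, and one must check that the threshold on $\mu$ used to produce $\gamma_M$ in Lemma~\ref{lemma: moreau-negative-drift} is compatible with the constants appearing in that quadratic term, so that in the subsequent finite-time argument the factor $1-\alpha_{k-1}\gamma_M + O(\alpha_{k-1}^2/\mu)$ is eventually a genuine contraction.
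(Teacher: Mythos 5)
Your proposal is correct and follows essentially the same route as the paper's proof: the $\tfrac{1}{\mu}$-smoothness descent inequality for $M_\mu$, conditional expectation with the unbiased-noise assumption, the negative drift from Lemma~\ref{lemma: moreau-negative-drift}, and the two-sided envelope comparison of Lemma~\ref{lemma: envelope-properties} to express the quadratic noise contribution through $M_\mu(x_{k-1})$, yielding exactly the stated coefficients. (Your observation that the cross term in $\norm{F+w}^2$ vanishes under conditioning is a slightly tighter micro-step than the paper's triangle-inequality-plus-Cauchy-Schwarz bound, and your reading of the conditioning as $\E[\cdot\mid\mathcal{F}_{k-1}]$ matches what the paper actually does.)
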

\begin{proof}
Consider $R = V^{\frac{2}{a}}$, we have:
\begin{align*}
   C_{1,a}^{\frac{2}{a}}\norm{x-x^*}^2 \leq R(x) \leq C_{2,a}^{\frac{2}{a}}\norm{x-x^*}^2 \forall x \in \R^d
\end{align*}
and
\begin{align*}
    \dot{R} &= \langle r_x, F(x) \rangle \forall r_x \in \partial R(x)
\end{align*}
where $\partial R(x) = \overline{co}\{\lim \nabla R(x) | x_i \rightarrow x, x_i \not \in \Omega_V \}$ is the generalized gradient of $R$. Note that:
\begin{align*}
    \nabla R(x) = \frac{2}{a} \nabla V(x) V(x)^{\frac{2}{a}-1} \forall x \not \in \Omega_V
\end{align*}
Hence:
\begin{align*}
    \langle r_x, F(x) \rangle &= \frac{2V(x)^{\frac{2}{a}-1}}{a} \langle g_x, F(x) \rangle \forall r_x \in \partial R(x), g_x \in \partial V(x)\\
    &\leq \frac{2V(x)^{\frac{2}{a}-1}}{a} \times (-\gamma V(x)) \\
    &= -\frac{2\gamma}{a} V(x)^{\frac{2}{a}} = -\frac{2\gamma}{a} R(x) \forall x = -\gamma_R R(x) \in \mathbb{R}^d
\end{align*}
where $\gamma_R = \frac{2\gamma}{a}$ and the first inequality follows from \ref{assumption: smooth-time-derivative}. Furthermore:
\begin{align*}
    \norm{\frac{\partial V}{\partial x}} &\leq G \norm{x-x^*}^{a-1} \\
    \Rightarrow \frac{2}{a}\norm{V^{\frac{2}{a}-1}}\norm{\frac{\partial V}{\partial x}} &\leq \frac{2}{a} \times C_{2,a}^{\frac{2}{a}-1}\norm{x-x^*}^{a \times \frac{2-a}{a}}G_2 \norm{x-x^*}^{a-1} \\ 
    \Rightarrow \norm{\frac{\partial R}{\partial x}} &\leq \frac{2G C_{2,a}^{\frac{2}{a}-1}}{a}\norm{x-x^*} = G_M \norm{x-x^*}
\end{align*}
Thus, we have $R$ satisfies Assumptions \ref{assumption: clarke-time-derivative}, \ref{assumption: polynomial-growth} and \ref{assumption: generalized-gradient-bound} for $a = 2$. Let $M_\mu = R \square \frac{\norm{x}^2}{2\mu}$, from the $\frac{1}{\mu}$-smoothness of $M$, we obtain:
\begin{align}
    \nonumber
    M_\mu(x_{k}) &\leq M_\mu(x_{k-1}) + \langle \nabla M_\mu(x_{k-1}), x_{k}-x_{k-1} \rangle + \frac{1}{2\mu}\norm{x_{k}-x_{k-1}}^2
\end{align}
Taking expectations on both sides, we have:
\begin{align*}
    E_{k} = \mathbb{E}[M_\mu(x_{k}) | \mathcal{F}_{k-1}] \leq M_\mu(x_{k-1}) + \alpha_{k-1} \langle \nabla M_\mu(x_{k-1}), F(x_{k-1}) \rangle + \frac{\alpha_{k-1}^2 \mathbb{E}[\norm{F(x_{k-1})+w_{k-1}}^2 | \mathcal{F}_{k-1}]}{2\mu}
\end{align*}
Thus, from Lemma \ref{lemma: moreau-negative-drift}, we have that:
\begin{align*}
    \langle \nabla M_\mu(x), F(x) \rangle \leq -\gamma_M M_\mu(x) \forall x \in \mathbb{R}^d
\end{align*}
with $\gamma_M = \sqbr{\frac{2\gamma C_{1,a}^{\frac{2}{a}}}{a\br{1 + \mu G_M}^2} - \mu CG_M^{2}}C_{2,a}^{-\frac{2}{a}}$. Denote $E_{k} = \mathbb{E}[M_\mu(x_{k}) | \mathcal{F}_{k-1}]$, we have:
\begin{align*}
    E_{k} &\leq M_\mu(x_{k-1}) + \alpha_{k-1} \langle \nabla M_\mu(x_{k-1}), F(x_{k-1}) \rangle + \frac{\alpha_{k-1}^2 \mathbb{E}[\norm{F(x_{k-1})+w_{k-1}}^2 | \mathcal{F}_{k-1}]}{2\mu}
\end{align*}
which gives:
\begin{align*}
    E_{k} &\leq (1-\alpha_{k-1} \gamma_M)M_\mu(x_{k-1}) + \frac{\alpha_{k-1}^2}{2\mu} \mathbb{E} [\norm{F(x_{k-1})+w_{k-1}}^2 | \mathcal{F}_{k-1}] \\
    &= (1-\alpha_{k-1} \gamma_M)M_\mu(x_{k-1}) + \frac{\alpha_{k-1}^2}{2\mu}\mathbb{E}[\norm{F(x_{k-1})-F(x^*)+w_{k-1}}^2 | \mathcal{F}_{k-1}].
\end{align*}
The RHS can be further bounded as:
% \textcolor{red}{We can consider fixing equation (c) if we want (missing a factor of $2$)}
\begin{align*}
    &\overset{(a)}{\leq} (1-\alpha_{k-1} \gamma_M)M_\mu(x_{k-1}) + \frac{\alpha_{k-1}^2}{2\mu}\mathbb{E}[(\norm{F(x_{k-1})-F(x^*)}+\norm{w_{k-1}})^2 | \mathcal{F}_{k-1}] \\
    &\overset{(b)}{\leq} (1-\alpha_{k-1} \gamma_M)M_\mu(x_{k-1}) + \frac{\alpha_{k-1}^2}{2\mu}\mathbb{E}[(C\norm{x_{k-1}-x^*}+\norm{w_{k-1}})^2 | \mathcal{F}_{k-1}] \\
    &\overset{(c)}{\leq} (1-\alpha_{k-1} \gamma_M)M_\mu(x_{k-1}) + \frac{\alpha_{k-1}^2 (C^2\norm{x_{k-1}-x^*}^2 + \mathbb{E}[\norm{w_{k-1}}^2 | \mathcal{F}_{k-1}])}{\mu} \\
    &\overset{(d)}{\leq} (1-\alpha_{k-1} \gamma_M)M_\mu(x_{k-1}) + \frac{\alpha_k^2 (C^2\norm{x_{k-1}-x^*}^2 + A + B\norm{x_{k-1}}^2)}{\mu} \\
    &\overset{(e)}{\leq} (1-\alpha_{k-1} \gamma_M)M_\mu(x_{k-1}) + \frac{\alpha_k^2 (C^2\norm{x_{k-1}-x^*}^2 + A + 2B\norm{x_{k-1}-x^*}^2 + 2B\norm{x^*}^2)}{\mu} \\
    &= (1-\alpha_{k-1} \gamma_M)M_\mu(x_{k-1}) + \frac{\alpha_{k-1}^2(C^2 + 2B)\norm{x_{k-1}-x^*}^2}{\mu} + \frac{\alpha_{k-1}^2(A+2B \norm{x^*}^2)}{\mu} \\
    &\overset{(f)}{\leq} (1-\alpha_{k-1} \gamma_M)M_\mu(x_{k-1}) + \frac{\alpha_{k-1}^2(C^2 + 2B)M_\mu(x_{k-1})}{\mu\left(\frac{1}{C_{1,a}^{\frac{2}{a}}} + 2\mu \right)} + \frac{\alpha_{k-1}^2(A+2B \norm{x^*}^2)}{\mu} \\
    &= \left(1 - \alpha_{k-1} \gamma_M + \frac{\alpha_{k-1}^2(C^2 + 2B)}{2\mu^2 + \frac{\mu}{C_{1,a}^{\frac{2}{a}}}} \right)M_\mu(x_{k-1}) + \frac{\alpha_{k-1}^2(A+2B \norm{x^*}^2)}{\mu}
`\end{align*}
The first inequality (a) follows from triangle inequality, the second inequality (b) follows from Assumption \ref{assumption: F-lipschitz}, the third inequality (c) follows from Cauchy-Schwarz, the fourth inequality (d) follows from the Assumption \ref{assumption: noise} and the last inequality (f) follows from \ref{lemma: envelope-properties}. Hence, we are done.
\end{proof}
From the one-iterate bound Proposition \ref{prop: one-iterate-bound-exponential}, we can expand all the terms by applying the result repeatedly. Then, with appropriate choice of step size of the form $\alpha_k = \frac{\alpha}{(k+K)^{\xi}}$ where $\alpha > 0, \xi \in [0,1]$ where $K$ is chosen such that $\alpha_k \leq \frac{\gamma_M\left(2\mu^2 + \frac{\mu}{C_{1,a}^{\frac{2}{a}}}\right)}{2(C^2 + 2B)} \forall k \in \mathbb{Z}^+$. 
\begin{prop}
\label{main-theorem-1}
Under Assumptions \ref{assumption: clarke-time-derivative}, \ref{assumption: polynomial-growth} and \ref{assumption: generalized-gradient-bound} and let
\begin{align*}
    \gamma_M = -\left[-\frac{2\gamma C_{1,a}^{\frac{2}{a}}}{a\left(1 + \mu G_M\right)^2} + \mu CG_M^{2}\right]C_{2,a}^{-\frac{2}{a}}, \alpha_0 \leq \frac{2(C^2 + 2B)}{\gamma_M\left(2\mu^2 + \frac{\mu}{C_{1,a}^{\frac{2}{a}}}\right)}
\end{align*}
then we have:
\begin{align*}
    \mathbb{E}[M_\mu(x_{k}) | \mathcal{F}_k] \leq C_{2,a}^{\frac{2}{a}} \norm{x_0-x^*}^2 \prod_{j = 0}^{k-1} \left(1 - \frac{\alpha_j \gamma_M}{2}\right) + \left[ \sum_{i = 0}^{k-1} \alpha_i^2 \prod_{j = i+1}^{k-1} \left( 1 - \frac{\alpha_j \gamma_M}{2} \right) \right]\frac{A + 2B\norm{x^*}^2}{\mu}
\end{align*}
\end{prop}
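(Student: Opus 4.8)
The plan is to iterate the one-step inequality of Proposition~\ref{prop: one-iterate-bound-exponential}, after first absorbing its quadratic-in-$\alpha_{k-1}$ correction term into the linear drift term. By the choice of $K$ in the step-size sequence $\alpha_k=\alpha/(k+K)^\xi$ (equivalently, by the hypothesis on $\alpha_0$, read in the direction that makes $\alpha_k \le \tfrac{\gamma_M(2\mu^2+\mu/C_{1,a}^{2/a})}{2(C^2+2B)}$ hold for every $k$, which is legitimate since $\{\alpha_k\}$ is nonincreasing), we have $\tfrac{\alpha_k^2(C^2+2B)}{2\mu^2+\mu/C_{1,a}^{2/a}} \le \tfrac{\alpha_k\gamma_M}{2}$, and hence Proposition~\ref{prop: one-iterate-bound-exponential} yields the cleaner recursion
\[
\E\!\left[M_\mu(x_k)\mid\mathcal{F}_{k-1}\right] \le \left(1-\frac{\alpha_{k-1}\gamma_M}{2}\right) M_\mu(x_{k-1}) + \frac{\alpha_{k-1}^2(A+2B\norm{x^*}_2^2)}{\mu}.
\]

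Next I would pass to unconditional expectations. Writing $e_k := \E[M_\mu(x_k)]$ and taking expectations of the displayed inequality via the tower property gives $e_k \le \left(1-\tfrac{\alpha_{k-1}\gamma_M}{2}\right) e_{k-1} + \tfrac{\alpha_{k-1}^2(A+2B\norm{x^*}_2^2)}{\mu}$. A routine induction on $k$ — unrolling this scalar recursion, with the convention that an empty product equals $1$ — then produces
\[
e_k \le e_0 \prod_{j=0}^{k-1}\left(1-\frac{\alpha_j\gamma_M}{2}\right) + \frac{A+2B\norm{x^*}_2^2}{\mu}\sum_{i=0}^{k-1}\alpha_i^2 \prod_{j=i+1}^{k-1}\left(1-\frac{\alpha_j\gamma_M}{2}\right).
\]

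Finally, since $x_0$ is deterministic, $e_0 = M_\mu(x_0)$, and by the definition of the Moreau envelope together with the growth bound $R(x) \le C_{2,a}^{2/a}\norm{x-x^*}_2^2$ from Lemma~\ref{lemma: envelope-properties} we get $e_0 \le R(x_0) \le C_{2,a}^{2/a}\norm{x_0-x^*}_2^2$; substituting this gives exactly the claimed bound. The only delicate point — the main obstacle, such as it is — is the bookkeeping on the contraction factors: one needs $\gamma_M>0$ (supplied by Lemma~\ref{lemma: moreau-negative-drift} once $\mu$ is chosen small enough) and each factor $1-\alpha_j\gamma_M/2$ to lie in $[0,1)$, since a negative factor would break the monotonicity used when iterating the recursion and discarding cross terms; verifying $\alpha_j\gamma_M/2 < 1$ for all $j$ is precisely what further constrains the choice of $K$. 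Everything else in the argument is elementary.
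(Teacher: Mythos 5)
Your proposal is correct and follows essentially the same route as the paper: absorb the quadratic correction in Proposition \ref{prop: one-iterate-bound-exponential} into the drift using the step-size condition (correctly read as $\alpha_k \le \gamma_M(2\mu^2+\mu/C_{1,a}^{2/a})/(2(C^2+2B))$, which the surrounding text confirms despite the inverted fraction in the statement), unroll the resulting recursion, and bound $M_\mu(x_0)\le R(x_0)\le C_{2,a}^{2/a}\norm{x_0-x^*}_2^2$. Your added care about taking unconditional expectations via the tower property and requiring each factor $1-\alpha_j\gamma_M/2$ to be nonnegative is a welcome tightening of details the paper leaves implicit.
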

\begin{proof}
By our choice of $\alpha_k$ and from the Lemma \ref{prop: one-iterate-bound-exponential}, we have:
\begin{align*}
    \mathbb{E}[M_\mu(x_{k}) | \mathcal{F}_{k-1}] &\leq \left(1 - \frac{\alpha_{k-1} \gamma_M}{2} \right) M_\mu(x_{k-1}) + \frac{\alpha_{k-1}^2(A+2B \norm{x^*}^2)}{\mu} \\
    \Rightarrow \mathbb{E}[M_\mu(x_{k}) | \mathcal{F}_{k-1}] &\leq \prod_{j = 0}^{k-1} \left(1 - \frac{\alpha_j \gamma_M}{2}\right) M_\mu(x_0) + \left[ \sum_{i = 0}^{k-1} \alpha_i^2 \prod_{j = i+1}^{k-1} \left( 1 - \frac{\alpha_j \gamma_M}{2} \right) \right]\frac{A + 2B\norm{x^*}^2}{\mu} \\
    &\leq C_{2,a}^{\frac{2}{a}} \norm{x_0-x^*}^2 \prod_{j = 0}^{k-1} \left(1 - \frac{\alpha_j \gamma_M}{2}\right) + \left[ \sum_{i = 0}^{k-1} \alpha_i^2 \prod_{j = i+1}^{k-1} \left( 1 - \frac{\alpha_j \gamma_M}{2} \right) \right]\frac{A + 2B\norm{x^*}^2}{\mu}
\end{align*}
\end{proof}
From this one-iterate bound, we expand all the terms and obtain the finite-time bounds depending on the chosen step sizes. The detailed description and proof of the finite-time bounds are presented below:

\begin{theorem}
\label{finite-time-corollary-1-full}
Under Assumptions \ref{assumption: noise}, \ref{assumption: F-lipschitz}, \ref{assumption: quadratic-growth}, \ref{assumption: clarke-time-derivative} and \ref{assumption: generalized-gradient-bound}, the rate of convergence for the SA algorithm is:\\
\textbf{For $\xi = 1$}:
\begin{align*}
\mathbb{E}[\norm{x_{k}-x^*}^2] &\leq C_{2,a}^{\frac{2}{a}} \br{\frac{1}{C_{1,a}^{\frac{2}{a}}} + 2\mu} \norm{x_0-x^*}^2 \left( \frac{K}{k+K} \right)^{\frac{\alpha \gamma_M}{2}} \\
&+ \br{\frac{1}{C_{1,a}^{\frac{2}{a}}} + 2\mu} \times
    % detailed version
    \begin{cases}
        \frac{8\alpha^2}{2-\alpha \gamma_M} \frac{1}{(k+K)^{\frac{\alpha \gamma_M}{2}}} \frac{A + 2B\norm{x^*}^2}{\mu} &\text{ if } \alpha \in \br{0, \frac{2}{\gamma_M}}\\
        \frac{4 \alpha^2\log(k+K)}{k+K} \frac{A + 2B\norm{x^*}^2}{\mu} &\text{ if } \alpha = \frac{2}{\gamma_M} \\
        \frac{8e\alpha^2}{\alpha \gamma_M - 2} \frac{1}{k+K} \frac{A + 2B\norm{x^*}^2}{\mu} &\text{ if } \alpha \in \br{\frac{2}{\gamma_M}, \infty}
    \end{cases} \qquad\qquad\qquad\qquad\qquad\qquad\qquad\qquad
    % big-O version
    % \begin{cases}
    %     O\br{\frac{1}{k^{\frac{\alpha \gamma_M}{2}}}} &\text{ if } \alpha \in \br{0, \frac{2}{\gamma_M}}\\
    %     O\br{\frac{\log k}{k}} &\text{ if } \alpha = \frac{2}{\gamma_M} \\
    %     O\br{\frac{1}{k}} &\text{ if } \alpha \in \br{\frac{2}{\gamma_M}, \infty}
    % \end{cases}
\end{align*}
\textbf{For $\xi \in (0,1)$}:
\begin{align*}
    \mathbb{E}[\norm{x_{k}-x^*}^2] &\leq C_{2,a}^{\frac{2}{a}} \br{\frac{1}{C_{1,a}^{\frac{2}{a}}} + 2\mu} \norm{x_0-x^*}^2 \exp\left[-\frac{\alpha \gamma_M}{2(1-\xi)}((k+K)^{1-\xi}-K^{1-\xi}) \right] \qquad\qquad\qquad\qquad\qquad\qquad\qquad\qquad \\
    &+ \frac{4\alpha\br{\frac{1}{C_{1,a}^{\frac{2}{a}}} + 2\mu}}{\gamma_M (k+K)^{\xi}} \frac{A + 2B\norm{x^*}^2}{\mu} 
\end{align*}
\textbf{For $\xi = 0$}:
\begin{align*}
    \mathbb{E}[\norm{x_{k}-x^*}^2] \leq C_{2,a}^{\frac{2}{a}} \br{\frac{1}{C_{1,a}^{\frac{2}{a}}} + 2\mu} \norm{x_0-x^*}^2 \left( 1-\frac{\alpha \gamma_M}{2} \right)^k + \br{\frac{1}{C_{1,a}^{\frac{2}{a}}} + 2\mu}\frac{2(A + 2B\norm{x^*}^2)\alpha}{\mu \gamma_M} \qquad\qquad\qquad\qquad
\end{align*}
where $K = \max\left\lbrace 1, \frac{\alpha (4C^2 + 8B)}{\gamma} \right\rbrace$ for $\xi = 1$ and $K = \max \left\lbrace 1, \left( \frac{\alpha (4C^2 + 8B)}{\gamma} \right)^{\frac{1}{\xi}}, \left( \frac{2 \xi}{\alpha \gamma} \right)^{\frac{1}{1-\xi}}\right\rbrace$ for $\xi \in (0,1)$.
\end{theorem}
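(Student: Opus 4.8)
The plan is to start from the unrolled one-iterate recursion in Proposition~\ref{main-theorem-1}, which already delivers
\[
\E\sqbr{M_\mu(x_k)} \leq C_{2,a}^{\frac{2}{a}}\norm{x_0-x^*}_2^2\, T_1 \;+\; \frac{A+2B\norm{x^*}_2^2}{\mu}\, T_2,
\]
with $T_1 = \prod_{j=0}^{k-1}\br{1-\frac{\alpha_j\gamma_M}{2}}$ and $T_2 = \sum_{i=0}^{k-1}\alpha_i^2\prod_{j=i+1}^{k-1}\br{1-\frac{\alpha_j\gamma_M}{2}}$. The first step is to convert this into a bound on $\E\sqbr{\norm{x_k-x^*}_2^2}$ using the sandwich from Lemma~\ref{lemma: envelope-properties}: $\norm{x-x^*}_2^2 \leq \br{\frac{1}{C_{1,a}^{2/a}}+2\mu}M_\mu(x)$ at the left end, and $M_\mu(x_0)\leq R(x_0)\leq C_{2,a}^{2/a}\norm{x_0-x^*}_2^2$ at the right end. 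This produces the prefactor $C_{2,a}^{2/a}\br{\frac{1}{C_{1,a}^{2/a}}+2\mu}$ on the transient term and the factor $\br{\frac{1}{C_{1,a}^{2/a}}+2\mu}$ multiplying the noise term, matching the statement. Everything that remains is a deterministic estimate of $T_1$ and $T_2$ in the three step-size regimes; the lower bound imposed on $K$ is exactly what is needed to ensure $1-\frac{\alpha_k\gamma_M}{2}\in(0,1)$ and that the one-iterate coefficient in Proposition~\ref{prop: one-iterate-bound-exponential} is genuinely $\le 1-\frac{\alpha_k\gamma_M}{2}$, so that the unrolling in Proposition~\ref{main-theorem-1} is legitimate.

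For $T_1$ I would use $1-x\le e^{-x}$, giving $T_1\le \exp\br{-\frac{\gamma_M}{2}\sum_{j=0}^{k-1}\alpha_j}$, and then estimate the partial sum of $\alpha_j=\alpha/(j+K)^\xi$ by comparison with an integral. For $\xi=1$ this gives $\sum_j\alpha_j\ge\alpha\log\frac{k+K}{K}$ and hence $T_1\le\br{\frac{K}{k+K}}^{\alpha\gamma_M/2}$; for $\xi\in(0,1)$ it gives $\sum_j\alpha_j\ge\frac{\alpha}{1-\xi}\br{(k+K)^{1-\xi}-K^{1-\xi}}$ and hence the stretched-exponential factor; for $\xi=0$ one simply has $T_1=(1-\frac{\alpha\gamma_M}{2})^k$.

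The delicate part, and where I expect the bulk of the work to sit, is $T_2$. For $\xi=1$, bounding the inner product by a $T_1$-type estimate gives $T_2 \lesssim \frac{\alpha^2}{(k+K)^{\alpha\gamma_M/2}}\sum_{i}(i+K)^{\alpha\gamma_M/2-2}$, and the asymptotics of the tail sum depend on whether $\alpha\gamma_M/2-2$ is $<-1$, $=-1$, or $>-1$, i.e.\ on whether $\alpha<\frac{2}{\gamma_M}$, $\alpha=\frac{2}{\gamma_M}$, or $\alpha>\frac{2}{\gamma_M}$; these yield respectively the $O\br{k^{-\alpha\gamma_M/2}}$, $O\br{\log k/k}$, and $O\br{1/k}$ rates together with the explicit constants $\frac{8\alpha^2}{2-\alpha\gamma_M}$, $4\alpha^2$, $\frac{8e\alpha^2}{\alpha\gamma_M-2}$. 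For $\xi\in(0,1)$ I would instead prove by induction on $k$ that $T_2\le\frac{4\alpha}{\gamma_M(k+K)^\xi}$: using $T_2^{(k+1)}=\br{1-\frac{\alpha_k\gamma_M}{2}}T_2^{(k)}+\alpha_k^2$ and the inductive hypothesis yields $T_2^{(k+1)}\le\frac{4\alpha}{\gamma_M(k+K)^\xi}-\frac{\alpha^2}{(k+K)^{2\xi}}$, and the claimed bound follows once $\frac{4\alpha}{\gamma_M}\br{(k+K)^{-\xi}-(k+1+K)^{-\xi}}\le\frac{\alpha^2}{(k+K)^{2\xi}}$, which reduces to a lower bound on $K$ of the form $K\gtrsim(\xi/(\alpha\gamma_M))^{1/(1-\xi)}$ — precisely the third term in the prescribed $K$. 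For $\xi=0$, $T_2=\alpha^2\sum_{i=0}^{k-1}(1-\frac{\alpha\gamma_M}{2})^{k-1-i}\le\frac{2\alpha}{\gamma_M}$ is a geometric sum. Assembling the three cases for $T_1$ and $T_2$ and substituting into the Moreau-envelope sandwich from the first step gives the stated finite-time bounds.

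The main obstacle is thus the $T_2$ estimate: obtaining the three-way split in the $\xi=1$ case with clean explicit constants, and selecting the induction hypothesis (the constant $4\alpha/\gamma_M$ and the matching lower bound on $K$) so that the inductive step closes in the $\xi\in(0,1)$ case. The Moreau-envelope conversion and the $T_1$ estimate are routine once Lemma~\ref{lemma: envelope-properties} and Proposition~\ref{main-theorem-1} are in hand.
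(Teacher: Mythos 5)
Your proposal is correct and follows essentially the same route as the paper's proof: unroll Proposition \ref{main-theorem-1}, convert via the Moreau-envelope sandwich of Lemma \ref{lemma: envelope-properties}, bound $T_1$ by $1-x\le e^{-x}$ plus integral comparison, and handle $T_2$ by the integral-comparison casework on $\alpha\gamma_M$ for $\xi=1$, the induction with constant $4\alpha/\gamma_M$ (as in the cited envelope paper) for $\xi\in(0,1)$, and a geometric sum for $\xi=0$. No gaps.
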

\begin{proof}
From Proposition \ref{main-theorem-1}, we have:
\begin{align*}
    \mathbb{E}[M_\mu(x_{k}) | \mathcal{F}_{k-1}] \leq C_{2,a}^{\frac{2}{a}} \norm{x_0-x^*}^2 \prod_{j = 0}^{k-1} \left(1 - \frac{\alpha_j \gamma_M}{2}\right) + \left[ \sum_{i = 0}^{k-1} \alpha_i^2 \prod_{j = i+1}^{k-1} \left( 1 - \frac{\alpha_j \gamma_M}{2} \right) \right]\frac{A + 2B\norm{x^*}^2}{\mu}.
\end{align*}
Note that $M_\mu(x) \geq \frac{\norm{x-x^*}^2}{C_{1,a}^{-\frac{2}{a}}+2\mu} \forall x \in \R^d$. Hence:
\begin{align}
    \label{mse-bound-1}
    \nonumber
    \mathbb{E}[\norm{x_{k}-x^*}^2] &\leq C_{2,a}^{\frac{2}{a}}\br{\frac{1}{C_{1,a}^{\frac{2}{a}}}+2\mu} \norm{x_0-x^*}^2 \prod_{j = 1}^{k-1} \left(1 - \frac{\alpha_j \gamma_M}{2}\right) \\
    &+ \left[ \sum_{i = 0}^{k-1} \alpha_i^2 \prod_{j = i+1}^{k-1} \left( 1 - \frac{\alpha_j \gamma_M}{2} \right) \right]\frac{\br{A + 2B\norm{x^*}^2}\br{\frac{1}{C_{1,a}^{\frac{2}{a}}}+2\mu}}{\mu}
\end{align}
Note that:
\begin{align}
    \nonumber
    C_{2,a}^{\frac{2}{a}}\norm{x_0-x^*}^2 \prod_{i = 0}^{k-1} \left(1 - \frac{\alpha_i \gamma_M}{2} \right) &\leq C_{2,a}^{\frac{2}{a}}\norm{x_0-x^*}^2 e^{-\frac{\gamma_M}{2} \sum_{i = 0}^k \alpha_i} \\
    &= C_{2,a}^{\frac{2}{a}}\norm{x_0-x^*}^2 e^{-\frac{\alpha \gamma_M}{2} \sum_{i = 0}^{k-1} \frac{1}{(i+K)^{\xi}}}
\end{align}
and since $\int_a^{b+1} h(x) dx \leq \sum_{n = a}^b h(n) \leq \int_{a-1}^b h(x) dx$ for any non-increasing function $h$, we have for $\xi = 1$:
\begin{align}
    C_{2,a}^{\frac{2}{a}} \norm{x_0-x^*}^2 \prod_{i = 0}^{k-1} \left(1 - \frac{\alpha_i \gamma_M}{2} \right) \leq C_{2,a}^{\frac{2}{a}} \norm{x_0-x^*}^2 \left( \frac{K}{k+K} \right)^{\frac{\alpha \gamma_M}{2}}.
\end{align}
and for $\xi \in (0,1)$:
\begin{align}
    C_{2,a}^{\frac{2}{a}} \norm{x_0-x^*}^2 \prod_{i = 0}^{k-1} \left(1 - \frac{\alpha_i \gamma_M}{2} \right) \leq C_{2,a}^{\frac{2}{a}} \norm{x_0-x^*}^2 \exp\left[-\frac{\alpha \gamma_M}{2(1-\xi)}((k+K)^{1-\xi}-K^{1-\xi}) \right].
\end{align}
Next, we need to bound the second term. For $\xi = 1$, we have:
\begin{align*}
    \sum_{i = 0}^{k-1} \alpha_i^2 \prod_{j = i+1}^{k-1} \left( 1 - \frac{\alpha_j \gamma_M}{2} \right) &= \alpha^2 \sum_{i = 0}^{k-1} \frac{1}{(i+K)^2} \prod_{j = i+1}^{k-1} \left( 1 - \frac{\alpha_j \gamma_M}{2} \right) \\
    &\leq  \alpha^2 \sum_{i = 0}^{k-1} \frac{1}{(i+K)^2} \exp \left( -\frac{\alpha \gamma_M}{2} \sum_{j = i+1}^{k-1} \frac{1}{j + K} \right) \\
    &\leq \alpha^2 \sum_{i = 0}^{k-1} \frac{1}{(i+K)^2} \exp \left( -\frac{\alpha \gamma_M}{2} \int_{i+1}^{k} \frac{1}{j + K} dj \right) \\
    &\leq \alpha^2 \sum_{i = 0}^{k-1} \frac{1}{(i+K)^2} \left( \frac{i+1+K}{k+K} \right)^{\frac{\alpha \gamma_M}{2}}  \\
    &\leq \frac{4\alpha^2}{(k+K)^{\frac{\alpha \gamma_M}{2}}} \sum_{i = 0}^{k-1} \frac{1}{(i+1+K)^{2-\frac{\alpha \gamma_M}{2}}}
\end{align*}
\end{proof}
This proof follows from \cite{zaiwei-envelope}. Now, it is sufficient to do casework for $\alpha \gamma_M$. We have:
\begin{itemize}
    \item $\alpha \gamma_M \in (0,2) \Rightarrow \sum_{i = 0}^{k-1} \frac{1}{(i+1+K)^{2-\frac{\alpha \gamma_M}{2}}} \leq \frac{2}{2 - \alpha \gamma_M}$
    \item $\alpha \gamma_M = 2 \Rightarrow \sum_{i = 0}^{k-1} \frac{1}{(i+1+K)^{2-\frac{\alpha \gamma_M}{2}}} \leq \log (k+K)$
    \item $\alpha \gamma_M \in (2,4) \Rightarrow \sum_{i = 0}^{k-1} \frac{1}{(i+1+K)^{2-\frac{\alpha \gamma_M}{2}}} \leq \frac{2(k+K)^{\alpha \gamma_M -2}}{\alpha \gamma_M - 2}$
    \item $\alpha \gamma_M = 4 \Rightarrow \sum_{i = 0}^{k-1} \frac{1}{(i+1+K)^{2-\frac{\alpha \gamma_M}{2}}} = k$
    \item $\alpha \gamma_M \in (4,+\infty) \Rightarrow \sum_{i = 0}^{k-1} \frac{1}{(i+1+K)^{2-\frac{\alpha \gamma_M}{2}}} \leq \frac{e (k+K)^{\frac{\alpha \gamma_M - 2}{2}}}{\frac{\alpha \gamma_M}{2}-1}$
\end{itemize}
Thus for $\xi = 1$, we have:
\begin{itemize}
    \item $\alpha \gamma_M \in (0,2) \Rightarrow \frac{4\alpha^2}{(k+K)^{\frac{\alpha \gamma_M}{2}}} \sum_{i = 0}^{k-1} \frac{1}{(i+1+K)^{2-\frac{\alpha \gamma_M}{2}}} \leq \frac{8\alpha^2}{2-\alpha \gamma_M} \frac{1}{(k+K)^{\frac{\alpha \gamma_M}{2}}}$
    \item $\alpha \gamma_M = 2 \Rightarrow \frac{4\alpha^2}{(k+K)^{\frac{\alpha \gamma_M}{2}}} \sum_{i = 0}^{k-1} \frac{1}{(i+1+K)^{2-\frac{\alpha \gamma_M}{2}}} \leq \frac{4 \alpha^2\log(k+K)}{k+K}$
    \item $\alpha \gamma_M \in (2,+\infty) \Rightarrow \frac{4\alpha^2}{(k+K)^{\frac{\alpha \gamma_M}{2}}} \sum_{i = 0}^{k-1} \frac{1}{(i+1+K)^{2-\frac{\alpha \gamma_M}{2}}} \leq \frac{8e\alpha^2}{\alpha \gamma_M - 2} \frac{1}{k+K}$
\end{itemize}
For $\xi \in (0,1)$, we replicate the analysis in \cite{zaiwei-envelope}. Consider the sequence $\{u_k\}_{k \geq 0}:u_0 = 0, u_{k} = \left( 1-\frac{\alpha_{k-1} \gamma_M}{2} \right)u_{k-1} + \alpha_{k-1}^2$, we have $u_{k} = \sum_{i = 0}^{k-1} \alpha_i^2 \prod_{j = i+1}^{k-1} \left( 1 - \frac{\alpha_{k-1} \gamma_M}{2} \right)$. By induction, when $k \geq \max \left(0,\left( \frac{4\xi}{\alpha \gamma_M} \right)^{\frac{1}{1-\xi}} - K\right)$, we have:
\begin{align*}
    u_{k} \leq \frac{4\alpha}{\gamma_M} \frac{1}{(k+K)^{\xi}}
\end{align*}
and thus $\sum_{i = 0}^{k-1} \alpha_i^2 \prod_{j = i+1}^{k-1} \left( 1 - \frac{\alpha_j \gamma_M}{2} \right) \leq \frac{4\alpha}{\gamma_M (k+K)^{\xi}}$. Thus, for $\xi \in (0,1)$, we have the bound:
\begin{align*}
    \mathbb{E}[\norm{x_{k}-x^*}^2] &\leq C_{2,a}^{\frac{2}{a}} \br{\frac{1}{C_{1,a}^{\frac{2}{a}}} + 2\mu} \norm{x_0-x^*}^2 \exp\left[-\frac{\alpha \gamma_M}{2(1-\xi)}((k+K)^{1-\xi}-K^{1-\xi}) \right] \\
    &+ \frac{4\alpha\br{\frac{1}{C_{1,a}^{\frac{2}{a}}} + 2\mu}}{\gamma_M (k+K)^{\xi}} \frac{A + 2B\norm{x^*}^2}{\mu}
\end{align*}

\subsubsection{Smooth exponentially stable case}
\label{sssec:finite-time-corollary-0-proof}
In this subsection, we will present a proof outline for Theorem \ref{finite-time-corollary-0}. Since we already have a Lyapunov function $V$ whose gradient is Lipschitz, we do not have to construct a Moreau envelope to smoothen the Lyapunov function but instead directly use the existing Assumption \ref{assumption: smooth-time-derivative} to obtain the finite time bounds. We present the full description of the Theorem below:
% \begin{lemma}
%     Assume that \eqref{assumption: gradient-lipschitz} and \eqref{assumption: smooth-time-derivative} are satisfied, then there exists positive constants $C_1, C_2$ such that:
%     \begin{align}
%         C_1\norm{x-x^*}^2 \leq V(x) \leq C_2\norm{x-x^*}^2
%     \end{align}
% \end{lemma}
% \begin{proof}
% Indeed, from \eqref{assumption: gradient-lipschitz} we have:
% \begin{align*}
%     V(x) \leq V(x^*) + \nabla V(x^*)(x-x^*) + \frac{L\norm{x-x^*}^2}{2} = \frac{L\norm{x-x^*}^2}{2} \forall x \in \mathbb{R}^d
% \end{align*}
% hence we have $V$ is upper-bounded by some quadratic function. Furthermore, we have that from \eqref{assumption: smooth-time-derivative}:
% \begin{align*}
%     \langle \nabla V(x), F(x) \rangle \leq -\gamma V(x) \\
%     \Rightarrow \gamma V(x) \leq \norm{\nabla V(x)} \norm{F(x)}
% \end{align*}
% \end{proof}
\begin{theorem}
%\label{finite-time-corollary-0}
Under Assumptions \ref{assumption: noise},  \ref{assumption: F-lipschitz}, \ref{assumption: smooth-time-derivative}, \ref{assumption: gradient-lipschitz}, \ref{assumption: quadratic-growth} and the step size $\alpha_k = \frac{\alpha}{(k+K)^\xi}$ where $K = \max\left\lbrace 1, \frac{\alpha (4C^2 + 8B)}{\gamma} \right\rbrace$ for $\xi = 1$ and $K = \max \left\lbrace 1, \left( \frac{\alpha (4C^2 + 8B)}{\gamma} \right)^{\frac{1}{\xi}}, \left( \frac{2 \xi}{\alpha \gamma} \right)^{\frac{1}{1-\xi}}\right\rbrace$ for $\xi \in (0,1)$, we have:
\begin{align*}
    &\textbf{For $\xi = 1$}: \\
    &\mathbb{E}\sqbr{\norm{x_{k}-x^*}^2} \leq \frac{C_2}{C_1} \norm{x_0-x^*}^2 \br{\frac{K}{k+K}}^{\frac{\alpha \gamma}{2}} + \frac{L(A + 2B\norm{x^*}^2)}{C_1} \times
    % detailed version
    \begin{cases}
        \frac{8\alpha^2}{2-\alpha \gamma} \frac{1}{(k+K)^{\frac{\alpha \gamma}{2}}} &\text{ if } \alpha \in \br{0, \frac{2}{\gamma}}\\
        \frac{4 \alpha^2 \log(k+K)}{k+K} &\text{ if } \alpha = \frac{2}{\gamma} \\
        \frac{8e\alpha^2}{\alpha \gamma - 2} \frac{1}{k+K} &\text{ if } \alpha \in \br{\frac{2}{\gamma}, \infty}
    \end{cases}
    \\
    &\textbf{For $\xi \in (0,1)$}: \\
    &\mathbb{E}\sqbr{\norm{x_{k}-x^*}^2} \leq \frac{C_2}{C_1} \norm{x_0-x^*}^2 \exp\left[-\frac{\alpha \gamma}{2(1-\xi)}((k+K)^{1-\xi}-K^{1-\xi}) \right] + \frac{4\alpha L(A + 2B\norm{x^*}^2)}{\gamma C_1 (k+K)^{\xi}} \qquad\qquad\qquad \\
    &\textbf{For $\xi = 0$}: \\
    &\mathbb{E}\sqbr{\norm{x_{k}-x^*}^2} \leq \frac{C_2}{C_1} \norm{x_0-x^*}^2 \left( 1-\frac{\alpha \gamma}{2} \right)^k + \frac{2\alpha L(A + 2B\norm{x^*}^2)}{\gamma C_1}
\end{align*}
\end{theorem}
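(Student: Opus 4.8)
The plan is to follow the argument of \cite{nonlinear-sa}, which is exactly the three-step scheme used for the non-smooth exponential case in Section~\ref{sssec:finite-time-corollary-1-proof}, except that Step~1 is trivial here: since $V$ is already $L$-gradient-Lipschitz (Assumption~\ref{assumption: gradient-lipschitz}) no Moreau smoothing is needed and one works directly with $V$ in the role that $M_\mu$ played there. Concretely, I would start from the descent inequality
\begin{align*}
V(x_{k+1}) \le V(x_k) + \langle \nabla V(x_k),\, x_{k+1}-x_k\rangle + \frac{L}{2}\norm{x_{k+1}-x_k}_2^2,
\end{align*}
substitute $x_{k+1}-x_k = \alpha_k(F(x_k)+w_k)$, and take $\E[\cdot\mid\cF_k]$. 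The cross term $\alpha_k\langle\nabla V(x_k),\E[w_k\mid\cF_k]\rangle$ vanishes by unbiasedness (Assumption~\ref{assumption: noise}), the remaining linear term is $\le -\alpha_k\gamma V(x_k)$ by Assumption~\ref{assumption: smooth-time-derivative}, and the quadratic term is controlled by writing $F(x_k)=F(x_k)-F(x^*)$, applying $\norm{F(x_k)-F(x^*)}_2\le C\norm{x_k-x^*}_2$ (Assumption~\ref{assumption: F-lipschitz}), the variance bound $\E[\norm{w_k}_2^2\mid\cF_k]\le A+B\norm{x_k}_2^2$, and $\norm{x_k}_2^2\le 2\norm{x_k-x^*}_2^2+2\norm{x^*}_2^2$. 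Finally I would use the lower bound $C_1\norm{x_k-x^*}_2^2\le V(x_k)$ from Assumption~\ref{assumption: quadratic-growth} to re-express the $\norm{x_k-x^*}_2^2$ coefficient in terms of $V(x_k)$, obtaining a one-step bound of the form
\begin{align*}
\E[V(x_{k+1})\mid\cF_k] \le \br{1-\alpha_k\gamma + \frac{L(C^2+2B)}{C_1}\,\alpha_k^2}V(x_k) + L\br{A+2B\norm{x^*}_2^2}\alpha_k^2.
\end{align*}

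With $\alpha_k=\alpha/(k+K)^\xi$ and $K$ chosen large enough that the quadratic coefficient never exceeds $\tfrac{\alpha_k\gamma}{2}$ for all $k$ (the value of $K$ in the statement suffices), the recursion collapses to $\E[V(x_{k+1})\mid\cF_k] \le (1-\tfrac{\alpha_k\gamma}{2})V(x_k) + L(A+2B\norm{x^*}_2^2)\alpha_k^2$. Taking total expectations by the tower property and unrolling,
\begin{align*}
\E[V(x_k)] \le V(x_0)\prod_{j=0}^{k-1}\br{1-\tfrac{\alpha_j\gamma}{2}} + L\br{A+2B\norm{x^*}_2^2}\sum_{i=0}^{k-1}\alpha_i^2\prod_{j=i+1}^{k-1}\br{1-\tfrac{\alpha_j\gamma}{2}},
\end{align*}
after which I would pass back to the mean-square error via $\norm{x_k-x^*}_2^2\le V(x_k)/C_1$ and $V(x_0)\le C_2\norm{x_0-x^*}_2^2$, producing the $C_2/C_1$ prefactor in front of the transient term and the $1/C_1$ factor in the noise term.

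The remaining work is the estimation of the product and the weighted sum, which is the only computational part and is structurally identical to the estimates in Section~\ref{sssec:finite-time-corollary-1-proof} (in turn following \cite{zaiwei-envelope}). For the transient (bias) term I would use $\prod_{j}(1-\tfrac{\alpha_j\gamma}{2})\le\exp(-\tfrac{\gamma}{2}\sum_j\alpha_j)$ with an integral comparison for $\sum_j\alpha_j=\alpha\sum_j(j+K)^{-\xi}$: for $\xi=1$ this yields the $(K/(k+K))^{\alpha\gamma/2}$ decay, for $\xi\in(0,1)$ the stretched exponential $\exp[-\tfrac{\alpha\gamma}{2(1-\xi)}((k+K)^{1-\xi}-K^{1-\xi})]$, and for $\xi=0$ the geometric factor $(1-\tfrac{\alpha\gamma}{2})^k$. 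For the variance term: when $\xi=1$ one reduces the sum to $\tfrac{4\alpha^2}{(k+K)^{\alpha\gamma/2}}\sum_i(i+1+K)^{-(2-\alpha\gamma/2)}$ and does casework on whether $\alpha\gamma$ is $<2$, $=2$, or $>2$ (and on $\alpha\gamma$ versus $4$ for the exact constant), which produces exactly the three branches $O(k^{-\alpha\gamma/2})$, $O(\log k/k)$, $O(1/k)$; when $\xi\in(0,1)$ one shows by induction on $k$ that the sum is at most $\tfrac{4\alpha}{\gamma}(k+K)^{-\xi}$; and when $\xi=0$ the sum is a geometric series bounded by $2\alpha/\gamma$. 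Assembling these pieces gives the three displayed bounds with the stated constants. I do not expect a conceptual obstacle: having a genuinely smooth $V$ removes precisely the difficulty — controlling the envelope approximation error — that made the non-smooth cases delicate, so this proof is essentially the ``$\mu\to 0$'' specialization of Proposition~\ref{prop: one-iterate-bound-exponential} and Theorem~\ref{finite-time-corollary-1-full}; the only care needed is the $\xi=1$ casework and the inductive estimate for $\xi\in(0,1)$.
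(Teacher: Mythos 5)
Your proposal is correct and is precisely the argument the paper intends: the paper omits this proof, stating only that it mirrors the proof of Theorem \ref{finite-time-corollary-1} (Proposition \ref{prop: one-iterate-bound-exponential} and Theorem \ref{finite-time-corollary-1-full}) with the Moreau envelope replaced by $V$ itself, and your one-step bound, choice of $K$, unrolled recursion, and casework on $\alpha\gamma$ and $\xi$ reproduce that specialization faithfully, including the $C_2/C_1$ and $L/C_1$ prefactors.
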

We omit the proof of this Theorem since it is similar to the proof of Theorem \ref{finite-time-corollary-1} and the only difference is that we do not have to use a Moreau envelope to obtain a smooth Lyapunov function. Hence, the analysis is fairly straightforward.

%\subsubsection{Proof of Corollary \ref{finite-time-corollary-2}}
\subsubsection{Smooth sub-exponentially stable case}
\label{sssec:finite-time-corollary-2-proof}
With the presence of smoothness, we are given the first-order upper bound and the negative drift of the Lyapunov function for free. However, for $c > 1$, applying the smooth inequality and the negative drift condition of the Lyapunov function would give:
\begin{align}
    \E\sqbr{V(x_{k}) | \cF_{k-1}} \leq V(x_{k-1})\left(1 + \frac{\alpha_k^2 L(C^2 + 2B)}{C_1}\right) - \alpha_{k-1} \gamma V(x_{k-1})^c + \alpha_k^2L(2B\norm{x^*}^2 + A)
\end{align}
In that case, we need to further bound $-(c-1)\alpha_{k-1}^2A - \alpha_{k-1} \gamma V(x_{k-1})^c \leq -c\alpha_{k-1}^{2-\frac{1}{c}}A^{1-\frac{1}{c}} \gamma^{\frac{1}{c}} V(x_{k-1})$ using AM-GM to absorb this term into the contraction term. Thus, now we have a similar one-iterate bound albeit with different step size dependence. Following this line of idea, we obtain this generalized result that would later be used to obtain finite-time bounds:
\begin{prop}
\label{main-theorem-2}
Let $V: \mathbb{R}^d \rightarrow \mathbb{R}$ be a $L$-smooth Lyapunov function satisfying Assumptions \ref{assumption: polynomial-growth} and \ref{assumption: asymptotic-stability}. We have:
\begin{align*}
    \mathbb{E}[V(x_{k}) | \mathcal{F}_{k-1}] &\leq C_2 \norm{x_0-x^*}^2 \prod_{j = 0}^{k-1} \left(1 - \alpha_j^{2-\frac{1}{c}}A^{1-\frac{1}{c}}\gamma^{\frac{1}{c}}\right) \\
    &+ \left[ \sum_{i = 0}^{k-1} \alpha_i^2 \prod_{j = i+1}^{k-1} \left( 1 - \alpha_j^{2-\frac{1}{c}} A^{1-\frac{1}{c}}\gamma^{\frac{1}{c}} \right) \right]\frac{L(cA + 2B\norm{x^*}^2)}{\mu}
\end{align*}
where $c \geq 1$.
\end{prop}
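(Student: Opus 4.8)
The plan is to reprise the three-step scheme behind Proposition \ref{main-theorem-1}, adding one new ingredient --- a weighted AM--GM step that linearizes the superlinear drift term $-\gamma\alpha_k V(x_k)^c$ when $c>1$. \textbf{Step 1 (one-iterate bound).} Since $V$ is $L$-smooth (Assumption \ref{assumption: gradient-lipschitz}), expand it along the update \eqref{update-rule}, take $\E[\,\cdot\mid\cF_{k-1}]$, drop the cross term via $\E[w_{k-1}\mid\cF_{k-1}]=0$ (Assumption \ref{assumption: noise}), replace $\langle\nabla V(x_{k-1}),F(x_{k-1})\rangle$ using Assumption \ref{assumption: asymptotic-stability}, and bound $\E[\norm{F(x_{k-1})+w_{k-1}}_2^2\mid\cF_{k-1}]$ using $F(x^*)=0$, Assumption \ref{assumption: F-lipschitz}, the variance bound of Assumption \ref{assumption: noise}, and $\norm{x}_2^2\le 2\norm{x-x^*}_2^2+2\norm{x^*}_2^2$; finally convert the residual $\norm{x_{k-1}-x^*}_2^2$ into $V(x_{k-1})/C_1$ via the quadratic lower bound (Assumption \ref{assumption: quadratic-growth}). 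This reproduces the inequality displayed just before the statement,
\[
\E\sqbr{V(x_{k})\mid\cF_{k-1}} \le \br{1+\tfrac{\alpha_{k-1}^2 L(C^2+2B)}{C_1}}V(x_{k-1}) - \gamma\alpha_{k-1}V(x_{k-1})^c + \alpha_{k-1}^2 L\br{A+2B\norm{x^*}_2^2}.
\]

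\textbf{Step 2 (linearizing the drift, $c>1$).} Apply the weighted AM--GM inequality with weights $\tfrac1c$ and $1-\tfrac1c$ to the numbers $\gamma\alpha_{k-1}V(x_{k-1})^c$ and $\alpha_{k-1}^2 A$, which gives $\gamma\alpha_{k-1}V(x_{k-1})^c + (c-1)\alpha_{k-1}^2 A \ge c\,\gamma^{1/c}A^{1-1/c}\alpha_{k-1}^{2-1/c}V(x_{k-1})$, i.e.
\[
-\gamma\alpha_{k-1}V(x_{k-1})^c \ \le\ -c\,\gamma^{1/c}A^{1-1/c}\alpha_{k-1}^{2-1/c}V(x_{k-1}) + (c-1)\alpha_{k-1}^2 A .
\]
Substitute this into Step 1, and choose $K$ large enough that $\tfrac{\alpha_{k-1}^2 L(C^2+2B)}{C_1}\le (c-1)\gamma^{1/c}A^{1-1/c}\alpha_{k-1}^{2-1/c}$ for all $k$; this is a one-sided bound on $\alpha_{k-1}^{1/c}$, hence satisfiable since $\alpha_k\downarrow 0$. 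Then $(c-1)$ of the $c$ copies of the new linear term dominate the positive curvature term, the coefficient of $V(x_{k-1})$ collapses to $1-\gamma^{1/c}A^{1-1/c}\alpha_{k-1}^{2-1/c}$, and the leftover $(c-1)\alpha_{k-1}^2 A$ merges into the additive term, bounding it by $\alpha_{k-1}^2 L(cA+2B\norm{x^*}_2^2)$. (When $c=1$ there is no superlinear term; one instead uses the usual halving $\alpha_k\le \gamma C_1/(2L(C^2+2B))$, recovering Proposition \ref{main-theorem-1}.) The outcome is the recursion
\[
\E\sqbr{V(x_{k})\mid\cF_{k-1}} \le \br{1-\gamma^{1/c}A^{1-1/c}\alpha_{k-1}^{2-1/c}}V(x_{k-1}) + \alpha_{k-1}^2 L\br{cA+2B\norm{x^*}_2^2}.
\]

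\textbf{Step 3 (unrolling).} Take total expectations and iterate this recursion from $j=0$ to $k-1$ using the tower property, then bound the initial value by $V(x_0)\le C_2\norm{x_0-x^*}_2^2$ (Assumption \ref{assumption: quadratic-growth}); this yields exactly the claimed bound, with contraction factors $\prod_j\bigl(1-\alpha_j^{2-1/c}A^{1-1/c}\gamma^{1/c}\bigr)$ and the noise sum weighted by those factors. The only genuinely delicate point is Step 2: the exponents in the AM--GM split must be arranged so that the linearized drift carries exactly the factor $\alpha_k^{2-1/c}$ --- this is what produces the rate $O(1/k^{\xi/c})$ in Theorem \ref{finite-time-corollary-2} after balancing against the $\alpha_k^2$ noise --- and one must check that the burn-in $K$ simultaneously makes the step-size condition of Step 2 hold for every $k$. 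Everything else is the routine smoothness-plus-martingale bookkeeping already performed for Proposition \ref{main-theorem-1}.
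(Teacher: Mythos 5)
Your proposal is correct and follows essentially the same route as the paper: the smoothness expansion plus noise bounding to get the one-iterate inequality, the weighted AM--GM split $\gamma\alpha_{k-1}V^c+(c-1)\alpha_{k-1}^2A\ge c\gamma^{1/c}A^{1-1/c}\alpha_{k-1}^{2-1/c}V$ to linearize the drift, the step-size condition $\tfrac{\alpha_{k-1}^2L(C^2+2B)}{C_1}\le(c-1)\gamma^{1/c}A^{1-1/c}\alpha_{k-1}^{2-1/c}$ to absorb the curvature term, and the unrolling. You even reproduce the paper's slightly loose merging of the leftover $(c-1)\alpha_{k-1}^2A$ into $\alpha_{k-1}^2LcA$, so nothing further is needed.
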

\begin{proof}
From the smoothness Assumption \ref{assumption: gradient-lipschitz}, we obtain the bound:
\begin{align}
    \label{one-iterate-bound-smooth}
    V(x_{k}) &\leq V(x_{k-1}) + \langle \nabla V(x_{k-1}), x_{k}-x_{k-1} \rangle + \frac{L\norm{x_{k}-x_{k-1}}^2}{2}
\end{align}
Combined with Assumption \ref{assumption: asymptotic-stability}, it follows that:
\begin{align*}
    V(x_{k}) &\leq V(x_{k-1}) - \alpha_{k-1} \gamma V(x_{k-1})^c + \alpha_{k-1}^2L\norm{F(x_{k-1})+w_{k-1}}^2/2 \\
    &\leq V(x_{k-1}) - \alpha_{k-1} \gamma V(x_{k-1})^c + \alpha_{k-1}^2 L ((C^2 + 2B) \norm{x_{k-1}-x^*}^2 + 2B \norm{x^*}^2 + A) \\
    &\leq V(x_{k-1})\left(1 + \frac{\alpha_{k-1}^2 L(C^2 + 2B)}{C_1}\right) - \alpha_{k-1} \gamma V(x_{k-1})^c + \alpha_{k-1}^2L(2B\norm{x^*}^2 + A)
\end{align*}
Now, we consider two separate cases: if $c = 1$ then we obtain:
\begin{align*}
    V(x_{k}) \leq V(x_{k-1})\left(1 - \alpha_{k-1} \gamma + \frac{\alpha_{k-1}^2 L(C^2 + 2B)}{C_1}\right) + \alpha_{k-1}^2L(2B\norm{x^*}^2 + A)
\end{align*}
In this case, we can simply proceed similarly to the proof of \ref{finite-time-corollary-1} to obtain a similar rate. If $c > 1$, we need to bound $V(x_{k-1})^c$ somehow in order to obtain a contraction on $V$ plus some noise. By AM-GM, we have the bound $(c-1)\alpha_{k-1}^2A + \alpha_{k-1} \gamma V(x_{k-1})^c \geq c\alpha_{k-1}^{2-\frac{1}{c}}A^{1-\frac{1}{c}} \gamma^{\frac{1}{c}} V(x_{k-1})$, and thus we obtain:
\begin{align*}
    V(x_{k}) &\leq V(x_{k-1})\left(1 + \frac{\alpha_{k-1}^2L(C^2 + 2B)}{C_1}\right) - c\alpha_{k-1}^{2-\frac{1}{c}}A^{1-\frac{1}{c}}\gamma^{\frac{1}{c}} V(x_{k-1}) + \alpha_{k-1}^2L(2B\norm{x^*}^2 + cA) \\
    &= V(x_{k-1})\left(1 - c\alpha_{k-1}^{2-\frac{1}{c}}A^{1-\frac{1}{c}}\gamma^{\frac{1}{c}} + \frac{\alpha_{k-1}^2L(C^2 + 2B)}{C_1}\right) + \alpha_{k-1}^2L(2B\norm{x^*}^2 + cA) \\
    &\leq V(x_{k-1})\br{1-\alpha_{k-1}^{2-\frac{1}{c}}A^{1-\frac{1}{c}}\gamma^{\frac{1}{c}}} + \alpha_{k-1}^2L(2B\norm{x^*}^2 + cA)
\end{align*}
for all $\alpha_k \leq \frac{C_{1,a}^c(c-1)^c A^{c-1}\gamma}{L^c(C^2 + 2B)^c}, c > 1$. Thus, we have the SA bound:
\begin{align*}
    \mathbb{E}[V(x_{k}) | \mathcal{F}_{k-1}] &\leq C_2 \norm{x_0-x^*}^2 \prod_{j = 0}^{k-1} \left(1 - \alpha_j^{2-\frac{1}{c}}A^{1-\frac{1}{c}}\gamma^{\frac{1}{c}}\right) \\
    &+ \left[ \sum_{i = 0}^{k-1} \alpha_i^2 \prod_{j = i+1}^{k-1} \left( 1 - \alpha_j^{2-\frac{1}{c}} A^{1-\frac{1}{c}}\gamma^{\frac{1}{c}} \right) \right]\frac{L(cA + 2B\norm{x^*}^2)}{\mu}
\end{align*}
\end{proof}
\begin{theorem}
%\label{finite-time-corollary-2}
Under Assumptions \ref{assumption: noise}, \ref{assumption: F-lipschitz}, \ref{assumption: gradient-lipschitz}, \ref{assumption: quadratic-growth}, \ref{assumption: asymptotic-stability}, \ref{assumption: bounded-iterates} and the step size $\alpha_k = \frac{\alpha}{(k+K)^\xi}$ where for $\xi = \frac{c}{2c-1}: K = \max\left\lbrace 1, \frac{\alpha (4C^2 + 8B)}{\gamma} \right\rbrace$ and $0 \leq \xi \leq \frac{c}{2c-1}: K = \max \left\lbrace 1, \left( \frac{\alpha (4C^2 + 8B)}{\gamma} \right)^{\frac{1}{\xi}}, \left( \frac{2 \xi}{\alpha \gamma} \right)^{\frac{1}{1-\xi}}\right\rbrace$ where $c > 1$. The finite-time bound of the SA algorithm is:
\\
\textbf{For $\xi = \frac{c}{2c-1}$}:
\begin{align*}
\mathbb{E}[\norm{x_{k}-x^*}^2] &\leq \frac{C_2}{C_1} \norm{x_0-x^*}^2 \left( \frac{K}{k+K} \right)^\phi +
\begin{cases}
    \frac{2^{\frac{2c}{2c-1}}\alpha^2}{(k+K)^\phi}\frac{L (cA + 2B\norm{x^*}^2)}{C_1 \left(\frac{1}{2c-1}-\phi \right)} &\text{ if } 0 < \alpha < \tau \\
    \frac{2^{\frac{2c}{2c-1}}\alpha^2 \log(k+K)}{(k+K)^{\frac{1}{2c-1}}} \frac{L (cA + 2B\norm{x^*}^2)}{C_1} &\text{ if } \alpha = \tau \\
    \frac{e2^{\frac{2c}{2c-1}}\alpha^2}{(k+K)^{\frac{1}{2c-1}}} \frac{L (cA + 2B\norm{x^*}^2)}{C_1 \br{\phi - \frac{1}{2c-1}}} &\text{ if } \alpha > \tau
\end{cases}
\end{align*}
\textbf{For $\xi \in \left(0,\frac{c}{2c-1}\right)$}:
\begin{align*}
    \mathbb{E}[\norm{x_{k}-x^*}^2] \leq \frac{C_2}{C_1} \norm{x_0-x^*}^2 \exp\left[-\frac{\phi \br{(k+K)^{1-\frac{(2c-1)\xi}{c}}-K^{1-\frac{(2c-1)\xi}{c}}}}{\br{1-\frac{(2c-1)\xi}{c}}} \right] + \frac{2\alpha^2}{C_1 \phi (k+K)^{\frac{\xi}{c}}}
\end{align*}
\textbf{For $\xi = 0$}:
\begin{align*}
    \mathbb{E}[\norm{x_{k}-x^*}^2] \leq \frac{C_2}{C_1} \norm{x_0-x^*}^2 \left(1 - \phi\right)^k + \frac{L \alpha^2(cA + 2B\norm{x^*}^2)}{C_1 \phi}
\end{align*}
where $\phi = \alpha^{2-1/c} A^{1-1/c} \gamma^{1/c}, \tau = \br{\frac{1}{(2c-1)A^{1-1/c}\gamma^{1/c}}}^{\frac{c}{2c-1}}$.
\end{theorem}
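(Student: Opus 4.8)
The plan is to build directly on Proposition~\ref{main-theorem-2}, which already does the hard analytic work (the smoothness inequality, the noise bound, and the AM--GM step that absorbs $V(x_{k-1})^c$ into a linear contraction). Writing $\beta_j := \alpha_j^{2-1/c}A^{1-1/c}\gamma^{1/c}$, that proposition gives
\[
\E[V(x_k)\mid\cF_{k-1}] \le C_2\norm{x_0-x^*}_2^2\, T_1 + \frac{L\br{cA+2B\norm{x^*}_2^2}}{\mu}\,T_2,\quad T_1:=\prod_{j=0}^{k-1}(1-\beta_j),\quad T_2:=\sum_{i=0}^{k-1}\alpha_i^2\!\!\prod_{j=i+1}^{k-1}\!\!(1-\beta_j).
\]
I would first note that with $\alpha_k=\alpha/(k+K)^\xi$ one has $\beta_j=\phi/(j+K)^{\xi(2c-1)/c}$, so the exponent $\xi(2c-1)/c$ equals $1$ precisely when $\xi=c/(2c-1)$ and is strictly less than $1$ when $\xi<c/(2c-1)$; this is the dichotomy that produces the three cases. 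Before anything else I would check that the stated choice of $K$ forces $\alpha_k$ below the threshold $C_{1,a}^c(c-1)^cA^{c-1}\gamma/(L^c(C^2+2B)^c)$ required by Proposition~\ref{main-theorem-2}, so the recursion is valid for every $k$, and then convert everything to $\E[\norm{x_k-x^*}_2^2]$ by dividing through by $C_1$ via Assumption~\ref{assumption: quadratic-growth}.

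\textbf{Bounding $T_1$.} Using $1-x\le e^{-x}$ gives $T_1\le\exp\br{-\sum_{j=0}^{k-1}\beta_j}$, after which I would bound the sum below by an integral (the same device $\int_a^{b+1}h\le\sum_a^b h$ for non-increasing $h$ already used in the excerpt). For $\xi=c/(2c-1)$ this yields $T_1\le (K/(k+K))^{\phi}$; for $\xi\in(0,c/(2c-1))$ it yields the stretched-exponential factor $\exp\!\br{-\phi\br{(k+K)^{1-(2c-1)\xi/c}-K^{1-(2c-1)\xi/c}}/(1-(2c-1)\xi/c)}$; and for $\xi=0$, $T_1=(1-\phi)^k$ directly. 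Combined with $C_2/C_1$ this gives the transient term in all three displays.

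\textbf{Bounding $T_2$.} This is where the casework lives. For $\xi=c/(2c-1)$: bound $\prod_{j=i+1}^{k-1}(1-\phi/(j+K))\le((i+1+K)/(k+K))^{\phi}$ and $(i+K)^{-2\xi}\le 2^{2\xi}(i+1+K)^{-2\xi}$ with $2\xi=2c/(2c-1)$, reducing to $T_2\le \frac{2^{2c/(2c-1)}\alpha^2}{(k+K)^{\phi}}\sum_i(i+1+K)^{-(2\xi-\phi)}$; since $2\xi-\phi=1+\tfrac1{2c-1}-\phi$, the three sub-cases $\phi<\tfrac1{2c-1}$, $\phi=\tfrac1{2c-1}$, $\phi>\tfrac1{2c-1}$ --- which translate to $\alpha<\tau$, $\alpha=\tau$, $\alpha>\tau$ by the definitions of $\phi$ and $\tau$ --- give a bounded residual sum, a $\log(k+K)$ factor, or a $(k+K)^{\phi-1/(2c-1)}$ growth respectively, producing the $O(1/k^\phi)$, $O(\log k/k^{1/(2c-1)})$, $O(1/k^{1/(2c-1)})$ rates. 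For $\xi\in(0,c/(2c-1))$ the cleanest route is the inductive argument used for the $\xi\in(0,1)$ case of Theorem~\ref{finite-time-corollary-1-full}: set $u_0=0$, $u_k=(1-\beta_{k-1})u_{k-1}+\alpha_{k-1}^2$, observe $u_k=T_2$, and verify by induction that $u_k\le \frac{2\alpha^2}{\phi(k+K)^{\xi/c}}$, using that $\alpha_k^2/\beta_k=\frac{\alpha^2}{\phi}(k+K)^{-\xi/c}$ and that $K\ge (2\xi/(\alpha\gamma))^{1/(1-\xi)}$ makes the inductive step close. For $\xi=0$ a geometric series gives $T_2\le \alpha^2/\phi$, hence the $+\frac{L\alpha^2(cA+2B\norm{x^*}_2^2)}{\mu C_1\phi}$ steady-state term.

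\textbf{Main obstacle.} The genuinely delicate part is the $T_2$ bookkeeping: making the induction for $\xi\in(0,c/(2c-1))$ close with the constant $2$ rather than something $K$-dependent (this is exactly where the precise form of $K$, including the $(2\xi/(\alpha\gamma))^{1/(1-\xi)}$ term, is needed), and, in the boundary case $\xi=c/(2c-1)$, making sure the exponent $2\xi-\phi$ stays positive where the ``bounded sum'' estimate is used --- which follows from the upper bound on $\alpha$ inherited from Proposition~\ref{main-theorem-2}, but must be stated. Everything else is Bernoulli/integral estimation entirely parallel to the exponential case already carried out in the excerpt.
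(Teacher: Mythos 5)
Your proposal follows essentially the same route as the paper's own proof: it starts from Proposition~\ref{main-theorem-2}, performs the identical $T_1$/$T_2$ decomposition, bounds $T_1$ via $1-x\le e^{-x}$ and integral comparison, handles $T_2$ at the critical exponent $\xi=c/(2c-1)$ by the same $2^{d}$-shift and casework on $\phi$ versus $d-1=1/(2c-1)$, and uses the same inductive recursion $u_k=(1-\beta_{k-1})u_{k-1}+\alpha_{k-1}^2$ with the constant $2\alpha^2/\phi$ for $\xi\in(0,c/(2c-1))$ and a geometric series for $\xi=0$. The bounds you state agree with the paper's (e.g.\ $2\alpha^{1/c}/(A^{1-1/c}\gamma^{1/c})=2\alpha^2/\phi$ and $\xi'/(2c-1)=\xi/c$), so the proposal is correct and matches the paper's argument.
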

\begin{proof}
From Proposition \ref{main-theorem-2} and Assumption \ref{assumption: quadratic-growth}, we have:
\begin{align}
    \label{mse-bound-2}
    \nonumber
    \mathbb{E}[\norm{x_{k}-x^*}^2] \leq \frac{C_2}{C_1} \norm{x_0-x^*}^2 \prod_{j = 0}^{k-1} \left(1 - \alpha_j^{2-\frac{1}{c}}A^{1-\frac{1}{c}}\gamma^{\frac{1}{c}}\right) + \\
    \sum_{i = 0}^{k-1} \alpha_i^2 \prod_{j = i+1}^{k-1} \left( 1 - \alpha_{j-1}^{2-\frac{1}{c}} A^{1-\frac{1}{c}}\gamma^{\frac{1}{c}} \right) \frac{L (cA + 2B\norm{x^*}^2)}{\mu C_1}
\end{align}
Let
\begin{align*}
    T_1 &= \frac{C_2}{C_1} \norm{x_0-x^*}^2 \prod_{j = 1}^{k-1} \left(1 - \alpha_j^{2-\frac{1}{c}} A^{1-\frac{1}{c}}\gamma^{\frac{1}{c}}\right) \\
    T_2 &= \sum_{i = 0}^{k-1} \alpha_i^2 \prod_{j = i+1}^{k-1} \left( 1 - \alpha_j^{2-\frac{1}{c}} A^{1-\frac{1}{c}}\gamma^{\frac{1}{c}} \right)
\end{align*}
We first bound the $T_1$ term as follows:
\begin{align*}
    \nonumber
    T_1 = \frac{C_2}{C_1} \norm{x_0-x^*}^2 \prod_{j = 0}^{k-1} \left(1 - \alpha_j^{2-\frac{1}{c}} A^{1-\frac{1}{c}}\gamma^{\frac{1}{c}}\right) &\leq \frac{C_2}{C_1} \norm{x_0-x^*}^2 e^{- A^{1-\frac{1}{c}}\gamma^{\frac{1}{c}} \sum_{i = 0}^{k-1} \alpha_i^{2-\frac{1}{c}}} \\
    &= \frac{C_2}{C_1} \norm{x_0-x^*}^2 e^{-A^{1-\frac{1}{c}}\gamma^{\frac{1}{c}} \sum_{i = 0}^{k-1} \frac{\alpha^{2-\frac{1}{c}}}{(i+K)^{\frac{(2c-1)\xi}{c}}}}
\end{align*}
and since $\int_a^{b+1} h(x) dx \leq \sum_{n = a}^n h(n) \leq \int_{a-1}^b h(x) dx$ for any non-increasing function $h$, we have for $\xi = \frac{c}{2c-1}$:
\begin{align}
    \frac{C_2}{C_1} \norm{x_0-x^*}^2 \prod_{j = 0}^{k-1} \left(1 - \alpha_j^{2-\frac{1}{c}}A^{1-\frac{1}{c}}\gamma^{\frac{1}{c}}\right) \leq \frac{C_2}{C_1} \norm{x_0-x^*}^2 \left( \frac{K}{k+K} \right)^{\alpha^{2-\frac{1}{c}} A^{1-\frac{1}{c}} \gamma^{\frac{1}{c}}}.
\end{align}
and for $\xi \in \br{0,\frac{c}{2c-1}} \Rightarrow \frac{(2c-1)\xi}{c} \in (0,1)$, we have:
\begin{align}
    % \nonumber
    \frac{C_2}{C_1} \norm{x_0-x^*}^2 \prod_{j = 0}^{k-1} \left(1 - \alpha_j^{2-\frac{1}{c}}A^{1-\frac{1}{c}}\gamma^{\frac{1}{c}}\right) \leq \frac{C_2}{C_1} \norm{x_0-x^*}^2 \exp\left[-\frac{\alpha^{2-\frac{1}{c}} A^{1-\frac{1}{c}} \gamma^{\frac{1}{c}}}{\br{1-\frac{\xi (2c-1)}{c}}}((k+K)^{1-\frac{\xi (2c-1)}{c}}-K^{1-\frac{\xi (2c-1)}{c}}) \right].
\end{align}
Next, we will bound the $T_2$ term. We have:
\begin{align*}
    T_2 = \sum_{i = 0}^{k-1} \alpha_i^2 \prod_{j = i+1}^{k-1} \left( 1 - \alpha_j^{2-\frac{1}{c}} A^{1-\frac{1}{c}}\gamma^{\frac{1}{c}} \right)
\end{align*}
For $\xi = \frac{c}{2c-1}$ and let $d = \frac{2c}{2c-1}$, we have:
\begin{align*}
    \sum_{i = 0}^{k-1} \alpha_i^2 \prod_{j = i+1}^{k-1} \left( 1 - \alpha_j^{2-\frac{1}{c}} A^{1-\frac{1}{c}}\gamma^{\frac{1}{c}} \right) &= \alpha^2 \sum_{i = 0}^{k-1} \frac{1}{(i+K)^d} \prod_{j = i+1}^{k-1} \left( 1 - \alpha_j^{2-\frac{1}{c}} A^{1-\frac{1}{c}}\gamma^{\frac{1}{c}} \right) \\
    &\leq  \alpha^2 \sum_{i = 0}^{k-1} \frac{1}{(i+K)^d} \exp \left( -\alpha^{2-\frac{1}{c}} A^{1-\frac{1}{c}}\gamma^{\frac{1}{c}} \sum_{j = i+1}^{k-1} \frac{1}{j + K} \right) \\
    &\leq \alpha^2 \sum_{i = 0}^{k-1} \frac{1}{(i+K)^d} \exp \left( -\alpha^{2-\frac{1}{c}} A^{1-\frac{1}{c}}\gamma^{\frac{1}{c}} \int_{j = i+1}^{k} \frac{1}{j + K} \right) \\
    &\leq \alpha^2 \sum_{i = 0}^{k-1} \frac{1}{(i+K)^d} \left( \frac{i+1+K}{k+K} \right)^{\alpha^{2-\frac{1}{c}} A^{1-\frac{1}{c}}\gamma^{\frac{1}{c}}}  \\
    &\leq \frac{2^d\alpha^2}{(k+K)^{\alpha^{2-\frac{1}{c}} A^{1-\frac{1}{c}}\gamma^{\frac{1}{c}}}} \sum_{i = 0}^k \frac{1}{(i+1+K)^{d-\alpha^{2-\frac{1}{c}} A^{1-\frac{1}{c}}\gamma^{\frac{1}{c}}}}
\end{align*}
\begin{itemize}
    \item When $\alpha^{2-\frac{1}{c}} A^{1-\frac{1}{c}}\gamma^{\frac{1}{c}} \in (0, d-1)$ then $\sum_{i = 0}^{k-1} \frac{1}{(i+1+K)^{d-\alpha^{2-\frac{1}{c}} A^{1-\frac{1}{c}}\gamma^{\frac{1}{c}}}} \leq \frac{1}{d-1-\alpha^{2-\frac{1}{c}} A^{1-\frac{1}{c}}\gamma^{\frac{1}{c}}}$
    \item When $\alpha^{2-\frac{1}{c}} A^{1-\frac{1}{c}}\gamma^{\frac{1}{c}} = d-1$ then $\sum_{i = 0}^{k-1} \frac{1}{(i+1+K)^{d-\alpha^{2-\frac{1}{c}} A^{1-\frac{1}{c}}\gamma^{\frac{1}{c}}}} \leq \log(k+K)$
    \item When $\alpha^{2-\frac{1}{c}} A^{1-\frac{1}{c}}\gamma^{\frac{1}{c}} \in (d-1,d)$ then $\sum_{i = 0}^{k-1} \frac{1}{(i+1+K)^{d-\alpha^{2-\frac{1}{c}} A^{1-\frac{1}{c}}\gamma^{\frac{1}{c}}}} \leq \frac{(k+K)^{\alpha^{2-\frac{1}{c}} A^{1-\frac{1}{c}}\gamma^{\frac{1}{c}}-(d-1)})}{\alpha^{2-\frac{1}{c}} A^{1-\frac{1}{c}}\gamma^{\frac{1}{c}}-(d-1)}$
    \item When $\alpha^{2-\frac{1}{c}} A^{1-\frac{1}{c}}\gamma^{\frac{1}{c}} = d$ then $\sum_{i = 0}^{k-1} \frac{1}{(i+1+K)^{d-\alpha^{2-\frac{1}{c}} A^{1-\frac{1}{c}}\gamma^{\frac{1}{c}}}} = k$.
    \item When $\alpha^{2-\frac{1}{c}} A^{1-\frac{1}{c}}\gamma^{\frac{1}{c}} > d$ then $\sum_{i = 0}^{k-1} \frac{1}{(i+1+K)^{d-\alpha^{2-\frac{1}{c}} A^{1-\frac{1}{c}}\gamma^{\frac{1}{c}}}} \leq \frac{ (k+K)^{\alpha^{2-\frac{1}{c}} A^{1-\frac{1}{c}} \gamma^{\frac{1}{c}}-(d-1)}}{ A^{1-\frac{1}{c}} \gamma^{\frac{1}{c}} - (d-1)}$
\end{itemize}
In summary, when $\xi = \frac{c}{2c-1}$ we have:
\begin{align*}
    \mathbb{E}[\norm{x_{k}-x^*}^2] &\leq T_1 + T_2 \frac{L (cA + 2B\norm{x^*}^2)}{C_1}
\end{align*}
% \Rightarrow \mathbb{E}[\norm{x_{k+1}-x^*}^2] &\leq C_2 \norm{x_0-x^*}^2 \left( \frac{K}{k+1+K} \right)^{\beta A^{1-\frac{1}{c}} \gamma^{\frac{1}{c}}} + ...
Thus from the bounds above:
\begin{align*}
\mathbb{E}[\norm{x_{k}-x^*}^2] &\leq \frac{C_2}{C_1} \norm{x_0-x^*}^2 \left( \frac{K}{k+K} \right)^{\alpha^{2-\frac{1}{c}} A^{1-\frac{1}{c}} \gamma^{\frac{1}{c}}} \\
&+
\begin{cases}
    \frac{2^d\alpha^2}{(k+K)^{\alpha_j^{2-\frac{1}{c}} A^{1-\frac{1}{c}}\gamma^{\frac{1}{c}}}} \frac{L (cA + 2B\norm{x^*}^2)}{C_1 \left(d-1-\alpha^{2-\frac{1}{c}} A^{1-\frac{1}{c}}\gamma^{\frac{1}{c}}\right)} &\text{ if } 0 < \alpha < \br{\frac{d-1}{A^{1-\frac{1}{c}}\gamma^{\frac{1}{c}}}}^{\frac{c}{2c-1}}\\
    \frac{2^d\alpha^2 \log(k+K)}{(k+K)^{d-1}} \frac{L (cA + 2B\norm{x^*}^2)}{C_1} &\text{ if } \alpha = \br{\frac{d-1}{A^{1-\frac{1}{c}}\gamma^{\frac{1}{c}}}}^{\frac{c}{2c-1}} \\
    \frac{e2^d\alpha^2}{(k+K)^{d-1}} \frac{L (cA + 2B\norm{x^*}^2)}{C_1 \br{\alpha^{2-\frac{1}{c}} A^{1-\frac{1}{c}}\gamma^{\frac{1}{c}} - (d-1)}} &\text{ if } \alpha > \br{\frac{d-1}{A^{1-\frac{1}{c}}\gamma^{\frac{1}{c}}}}^{\frac{c}{2c-1}}
\end{cases}
\end{align*}
For $\xi \in \br{0,\frac{c}{2c-1}}$ (that is $\xi' =  \frac{(2c-1)\xi}{c} \in (0,1)$), consider the sequence $\{u_k\}_{k \geq 0}:u_0 = 0, u_{k+1} = \br{1-\alpha_j^{2-\frac{1}{c}} A^{1-\frac{1}{c}} \gamma^{\frac{1}{c}}}u_k + \alpha_k^2$, we have that $T_2 \leq u_k$. We will obtain a finite-time bound for $T_2$ by bounding the iterates of the sequence $\{u_k\}$ via induction. Assume that $\frac{\alpha^{\frac{1}{c}} D}{(k+K)^{(d-1)\xi'}} \geq u_k$ for some constant $D > 0$, we have:
\begin{align*}
    \frac{\alpha^{\frac{1}{c}} D}{(k+1+K)^{\frac{\xi'}{2c-1}}} - u_{k+1} &= \frac{\alpha^{\frac{1}{c}} D}{(k+1+K)^{\frac{\xi'}{2c-1}}} - \left((1-\alpha_k^{2-\frac{1}{c}} A^{1-\frac{1}{c}} \gamma^{\frac{1}{c}})u_{k} + \alpha^2\right) \\
    &\geq \frac{\alpha^{\frac{1}{c}} D}{(k+1+K)^{(d-1)\xi'}} - \left(\frac{\alpha^{\frac{1}{c}} D (1-\alpha_k^{2-\frac{1}{c}} A^{1-\frac{1}{c}} \gamma^{\frac{1}{c}})}{(k+K)^{(d-1)\xi'}} + \alpha_k^2\right) \\
    &= \frac{\alpha^{\frac{1}{c}} D}{(k+1+K)^{(d-1)\xi'}} - \left(\frac{\alpha^{\frac{1}{c}} D}{(k+K)^{(d-1)\xi'}} - \frac{\alpha^2 A^{1-\frac{1}{c}} \gamma^{\frac{1}{c}} D}{(k+K)^{d \xi'}} + \frac{\alpha^2}{(k+K)^{d\xi'}} \right) \\
    &= \frac{1}{(k + K)^{(d-1)\xi'}} \times \left( \alpha^{\frac{1}{c}} D\left(\frac{k+K}{k+1+K}\right)^{(d-1)\xi'} -\alpha^{\frac{1}{c}} D - \frac{\alpha^2 - \alpha^2 A^{1-\frac{1}{c}} \gamma^{\frac{1}{c}} D}{(k+K)^{\xi'}} \right) \\
    &\geq \frac{1}{(k + K)^{(d-1)\xi'}} \left( \alpha^{\frac{1}{c}} D\left(1 - \frac{(d-1)\xi'}{k+K} - 1 \right) + \frac{\alpha^2 A^{1-\frac{1}{c}} \gamma^{\frac{1}{c}} D - \alpha^2}{(k+K)^{\xi'}} \right) \\
    &= \frac{1}{(k + K)^{(d-1)\xi'}} \left( -\frac{\alpha^{\frac{1}{c}} D(d-1)\xi'}{k+K} + \frac{\alpha^2 A^{1-\frac{1}{c}} \gamma^{\frac{1}{c}} D - \alpha^2}{(k+K)^{\xi'}} \right) \geq 0
\end{align*}
for $c \geq 1 \Rightarrow d = \frac{2c}{2c-1} \in (1,2)$ and sufficiently large $K$ and some properly chosen constant $D$. Choose $D = \frac{2}{A^{1-\frac{1}{c}}\gamma^{\frac{1}{c}}}, K \geq \left[\frac{2\xi'}{\alpha^{2-\frac{1}{c}} (2c-1)A^{1-\frac{1}{c}}\gamma^{\frac{1}{c}}}\right]^{\frac{1}{1-\xi'}}$, we have that $u_k \leq \frac{2\alpha^{\frac{1}{c}}}{A^{1-\frac{1}{c}}\gamma^{\frac{1}{c}}(k+K+1)^{\frac{\xi'}{2c-1}}}$. The first inequality is obtained from the induction hypothesis and the second inequality is obtained from $\left(\frac{k+K}{k+K+1}\right)^{\frac{\xi'}{2c-1}} \geq \left[ \left( 1+\frac{1}{k+K} \right)^{k+K} \right]^{-\frac{\xi'}{(2c-1)(k+K)}} \geq e^{-\frac{\xi'}{(2c-1)(k+K)}} \geq 1 -\frac{\xi'}{(2c-1)(k+K)}$. Hence, we have that $T_2 \leq \frac{2\alpha^{\frac{1}{c}}}{A^{1-\frac{1}{c}}\gamma^{\frac{1}{c}}(k+K)^{(d-1)\xi'}}$, thus gives:
\begin{align*}
    \mathbb{E}[\norm{x_{k}-x^*}^2] \leq \frac{C_2}{C_1} \norm{x_0-x^*}^2 \exp\left[-\frac{\alpha^{2-\frac{1}{c}} A^{1-\frac{1}{c}} \gamma^{\frac{1}{c}}}{\br{1-\frac{\xi (2c-1)}{c}}}\br{(k+K)^{1-\frac{\xi (2c-1)}{c}}-K^{1-\frac{\xi (2c-1)}{c}}} \right] \\
    + \frac{2\alpha^{\frac{1}{c}}}{C_1 A^{1-\frac{1}{c}}\gamma^{\frac{1}{c}}(k+K)^{\frac{\xi}{c}}}
\end{align*}
For $\xi = 0$, from Theorem \ref{main-theorem-2} we have:
\begin{align*}
    \mathbb{E}[V(x_{k}) | \mathcal{F}_k] &\leq C_2 \norm{x_0-x^*}^2 \left(1 - \alpha^{2-\frac{1}{c}} A^{1-\frac{1}{c}}\gamma^{\frac{1}{c}}\right)^k \\
    &+ \left[ \sum_{i = 0}^{k-1} \alpha^2 \left( 1 - \alpha^{2-\frac{1}{c}} A^{1-\frac{1}{c}}\gamma^{\frac{1}{c}} \right)^{k-1-i} \right]L(cA + 2B\norm{x^*}^2) \\
    &\leq C_2 \norm{x_0-x^*}^2 \left(1 - \alpha^{2-\frac{1}{c}} A^{1-\frac{1}{c}}\gamma^{\frac{1}{c}}\right)^k + \frac{L \alpha^2(cA + 2B\norm{x^*}^2)}{\alpha^{2-\frac{1}{c}} A^{1-\frac{1}{c}}\gamma^{\frac{1}{c}}} \\
    &= C_2 \norm{x_0-x^*}^2 \left(1 - \alpha^{2-\frac{1}{c}} A^{1-\frac{1}{c}}\gamma^{\frac{1}{c}}\right)^k + \frac{L \alpha^{\frac{1}{c}}(cA + 2B\norm{x^*}^2)}{A^{1-\frac{1}{c}}\gamma^{\frac{1}{c}}}
\end{align*}
from $C_1 \norm{x-x^*}^2 \leq V(x)$ in Assumption \ref{assumption: quadratic-growth}, we have
\begin{align*}
    \mathbb{E}[\norm{x_{k}-x^*}^2] &\leq \frac{C_2}{C_1} \norm{x_0-x^*}^2 \left(1 - \alpha^{2-\frac{1}{c}} A^{1-\frac{1}{c}}\gamma^{\frac{1}{c}}\right)^k + \frac{L \alpha^{\frac{1}{c}}(cA + 2B\norm{x^*}^2)}{C_1 A^{1-\frac{1}{c}}\gamma^{\frac{1}{c}}}
\end{align*}
\end{proof}

%\subsubsection{Proof of Corollary \ref{finite-time-corollary-3}}
\subsubsection{Nonsmooth sub-exponentially stable case}
\label{sssec:finite-time-corollary-3-proof}
In this section, we will complement the results of the previous section by considering general non-smooth systems that satisfy sub-exponential stability. First, we obtain the negative drift of the Moreau envelope. Let $R = V^{\frac{2}{a}}$ and $M_\mu = R \square \frac{\norm{x}^2}{2\mu}$, we obtain the following bound on the time-derivative of the Moreau envelope:
\begin{lemma}
    %\label{lemma: moreau-negative-drift-2}
    Under Assumptions \ref{assumption: polynomial-growth}, \ref{assumption: generalized-gradient-bound} and \ref{assumption: clarke-asymptotic-stability} and let $R = V^{\frac{2}{a}}$ and $M_{\mu} = R \square \frac{\norm{x}^2}{2\mu}$, we have:
    \begin{align}
        \langle \nabla M_{\mu}(x), F(x) \rangle \leq -\frac{2\gamma\left(\frac{C_1}{C_2}\right)^{c-1+\frac{2}{a}}}{a\left(1 + \mu G_M\right)^2} M_{\mu}(x)^{\frac{a(c-1)}{2}+1} + \mu C G_M \left( \frac{C_{2,a}^{\frac{2}{a}}}{C_{1,a}^{\frac{2}{a}}} + 2C_{2,a}^{\frac{2}{a}}\mu \right)M_{\mu}(x) \forall x \in \mathbb{R}^d,
    \end{align}
    where $G_M = \frac{2GC_{2,a}^{\frac{2}{a}-1}}{a}$.
\end{lemma}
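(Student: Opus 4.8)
The plan is to establish this bound by exactly the route used for Lemma~\ref{lemma: moreau-negative-drift} (the exponentially stable analogue), the only structural change being that the negative drift of $R$ now carries the exponent $\tfrac{a(c-1)}{2}+1$ supplied by Lemma~\ref{rescaled-time-derivative-condition} (itself a consequence of Assumption~\ref{assumption: clarke-asymptotic-stability}) rather than being linear. Concretely, I would fix $x\in\R^d$, set $u=\operatorname{prox}_{\mu R}(x)$, and use the Envelope Theorem (as in Lemma~\ref{lemma: moreau-gradient-bound}) to write $\nabla M_\mu(x)=\tfrac{x-u}{\mu}$ together with the optimality inclusion $\nabla M_\mu(x)\in\partial R(u)$. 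Splitting the inner product at $u$ gives
\begin{align*}
\langle \nabla M_\mu(x),F(x)\rangle=\langle \nabla M_\mu(x),F(u)\rangle+\Big\langle \tfrac{x-u}{\mu},\,F(x)-F(u)\Big\rangle,
\end{align*}
and the proof then reduces to bounding the two pieces separately.

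For the first piece, since $\nabla M_\mu(x)\in\partial R(u)$, Lemma~\ref{rescaled-time-derivative-condition} gives $\langle\nabla M_\mu(x),F(u)\rangle\le-\tfrac{2\gamma}{a}R(u)^{\frac{a(c-1)}{2}+1}$. For the cross piece I would apply Cauchy--Schwarz and the Lipschitz assumption (Assumption~\ref{assumption: F-lipschitz}) to get $\big\langle\tfrac{x-u}{\mu},F(x)-F(u)\big\rangle\le\tfrac{C}{\mu}\norm{x-u}_2^2=\mu C\norm{\nabla M_\mu(x)}_2^2$, and then bound $\norm{\nabla M_\mu(x)}_2\le G_M\norm{u-x^*}_2\le G_M\norm{x-x^*}_2$ using Lemma~\ref{lemma: moreau-gradient-bound} together with non-expansiveness of the proximal operator (here $\operatorname{prox}_{\mu R}(x^*)=x^*$, since $x^*$ minimizes $R$).

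It remains to re-express both pieces in terms of $M_\mu(x)$. For the drift piece I would invoke Corollary~\ref{u-lower-bound-general} to get $\norm{u-x^*}_2\ge\norm{x-x^*}_2/(1+\mu G_M)$, combine with the lower polynomial bound $R(u)\ge C_{1,a}^{2/a}\norm{u-x^*}_2^2$ and with $M_\mu(x)\le R(x)\le C_{2,a}^{2/a}\norm{x-x^*}_2^2$ (Lemma~\ref{lemma: envelope-properties}), obtaining $R(u)\ge \tfrac{C_{1,a}^{2/a}}{C_{2,a}^{2/a}(1+\mu G_M)^2}M_\mu(x)$; raising this to the power $\tfrac{a(c-1)}{2}+1$ and using $\tfrac{2}{a}\big(\tfrac{a(c-1)}{2}+1\big)=c-1+\tfrac2a$ produces the coefficient $\tfrac{2\gamma (C_1/C_2)^{c-1+2/a}}{a(1+\mu G_M)^2}$ once the geometric factors are collected. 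For the cross piece, $\mu C G_M^2\norm{u-x^*}_2^2\le\mu C G_M^2\norm{x-x^*}_2^2$ followed by the Moreau approximation bound $R(x)\le\big(\tfrac{C_{2,a}^{2/a}}{C_{1,a}^{2/a}}+2C_{2,a}^{2/a}\mu\big)M_\mu(x)$ of Lemma~\ref{lemma: envelope-properties} yields a term of the form $\mu C G_M\big(\tfrac{C_{2,a}^{2/a}}{C_{1,a}^{2/a}}+2C_{2,a}^{2/a}\mu\big)M_\mu(x)$; adding the two estimates gives the claim.

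The main obstacle will be the bookkeeping forced by the exponent $\tfrac{a(c-1)}{2}+1>1$: unlike the exponential case, the drift term and the cross term live at different homogeneity degrees in $\norm{x-x^*}_2$ and cannot be merged, so each must be converted to $M_\mu(x)$ separately, and one has to verify that the powers of $(1+\mu G_M)$ and of $C_{1,a}/C_{2,a}$ generated by raising the lower bound on $R(u)$ to the power $\tfrac{a(c-1)}{2}+1$ collapse to exactly the constants appearing in the statement (a smallness restriction on $\mu$, of the kind already used in Lemma~\ref{lemma: moreau-negative-drift}, is convenient here). A secondary point, not an obstacle for the lemma but explaining its form, is that the right-hand side is \emph{not} negative semidefinite: for $c>1$ the positive $O(\mu)M_\mu(x)$ term dominates the negative term near $x^*$, which is precisely what later forces the use of a time-varying smoothing parameter $\mu_k$.
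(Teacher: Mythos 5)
Your proposal follows essentially the same route as the paper's proof: split the inner product at $u=\operatorname{prox}_{\mu R}(x)$ using $\nabla M_\mu(x)=\tfrac{x-u}{\mu}\in\partial R(u)$, apply Lemma \ref{rescaled-time-derivative-condition} to the drift piece and Cauchy--Schwarz plus Assumption \ref{assumption: F-lipschitz} to the cross piece, then convert both back to $M_\mu(x)$ via Corollary \ref{u-lower-bound-general}, the polynomial bounds, and Lemma \ref{lemma: envelope-properties}. The only caveat is one of constant bookkeeping: your cross term naturally comes out as $\mu C G_M^2(\cdot)M_\mu(x)$ rather than the stated $\mu C G_M(\cdot)M_\mu(x)$, a discrepancy that is already present in the paper's own chain of inequalities and is immaterial to how the lemma is subsequently used.
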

\begin{proof}
Indeed, approaching similarly to the proof of Lemma \ref{lemma: moreau-negative-drift}, we have:
\begin{align*}
    \langle \nabla M_{\mu}(x), F(x) \rangle &\leq \langle \nabla R(u), F(u) \rangle + \langle \nabla M_{\mu}(x), F(x)-F(u) \rangle\\
    &= -\frac{2\gamma}{a} R(u)^{\frac{a(c-1)}{2}+1} + C \norm{x-u}\norm{\nabla M_{\mu}(x)} \\
    &\leq -\frac{2\gamma C_{1,a}^{c-1+\frac{2}{a}}}{a} \norm{u-x^*}^{a(c-1)+2} + \frac{2 \mu C G C_{2,a}^{\frac{2}{a}-1}}{a}  \norm{u-x^*}^2 \\
    &\leq -\frac{2\gamma\left(\frac{C_1}{C_2}\right)^{c-1+\frac{2}{a}}}{a\left(1 + \mu G_M\right)^2} R(x)^{\frac{a(c-1)}{2}+1} + \frac{2 \mu C G}{a} R(x) \\
    &\leq -\frac{2\gamma\left(\frac{C_1}{C_2}\right)^{c-1+\frac{2}{a}}}{a\left(1 + \mu G_M\right)^2} M_{\mu}(x)^{\frac{a(c-1)}{2}+1} + \mu C G_M \left( \frac{C_{2,a}^{\frac{2}{a}}}{C_{1,a}^{\frac{2}{a}}} + 2C_{2,a}^{\frac{2}{a}}\mu \right)M_{\mu}(x)
\end{align*}
where the first inequality is from Lemma \ref{lemma: rescaled-time-derivative-condition} and Assumption \ref{assumption: F-lipschitz}, the second inequality is from Assumption \ref{assumption: polynomial-growth} and Lemma \ref{lemma: moreau-gradient-bound}, the third inequality is from Corollary \ref{u-lower-bound-general} and the nonexpansive property of the prox operator, and Lemma \ref{lemma: envelope-properties}.
\end{proof}
Notice that we don't have a negative semi-definite RHS as we had in Lemma \ref{lemma: moreau-negative-drift}. Thus, naively applying the Moreau envelope here will not give convergence even for diminishing step sizes. Indeed, with constant $\mu$, there is an additional positive term $\frac{2\mu CG}{a}R(x)$ which cannot be canceled out by the other non-negative term for sufficiently small $x$, meaning that we would not be able to obtain convergence for arbitrary accuracy. This motivates us to modify the Moreau envelope by having the parameter $\mu$ to be adaptive so that the additional non-negative term will vanish to $0$ as we slowly drive $\mu \rightarrow 0$.

If we choose $E_{k+1} = \E[M_{\mu_{k+1}}(x_{k+1})|\mathcal{F}_k]$ as our potential function then the one-iterate bound Proposition \ref{prop: one-iterate-bound-adaptive-mu} is similar to \eqref{one-iterate-bound-smooth} which we can follow a similar proof strategy to obtain finite-time convergence. However, the difference is that since we now have a time-varying $\mu_k$ parameter, a careful choice of $\mu_k$ is required in order to achieve competitive rates. Choosing a $\mu_k$ that converges to $0$ too slowly would harm the rate at which $E_k$ converges while if $\mu_k$ converges too fast then the noise term would grow too quickly and it would harm the convergence rate regardless. From Lemma \ref{lemma: moreau-negative-drift-2}, we obtain the following one-iterate bound:
%For non-smooth sub-exponentially stable systems, we have neither smoothness nor the negative drift as in the previous two cases (recall that \ref{lemma: moreau-negative-drift-2} does not have a negative semi-definite RHS). Therefore, we have to slightly modify our Moreau envelope to make it time-varying so as to better approximate the Lyapunov function as $\norm{x-x^*} \rightarrow 0$. 
\begin{prop}
\label{prop: one-iterate-bound-adaptive-mu}
Let $R = V^{\frac{2}{a}}$ and $M_{\mu} = R \square \frac{\norm{x}^2}{2\mu}$, there exists constants $\nu_1, \nu_2, \nu_3, \nu_4, A^*, B^* > 0$ such that for all $k \in \mathbb{Z}^+$:
\begin{align}
    \nonumber
    \mathbb{E}[M_{\mu_{k+1}}(x_{k+1}) | \mathcal{F}_k] &\leq \left(1 + \alpha_k \mu_k \nu_1 + (\mu_k-\mu_{k+1})\nu_2 + \frac{\alpha_k^2\nu_3}{\mu_k} \right)M_{\mu_k}(x_k) \\
    &- \alpha_k \nu_4 M_{\mu_k}(x_k)^{\frac{a(c-1)}{2}+1} + \frac{\alpha_k^2}{\mu_k} \frac{\br{\mu_0^2 G_M^2 + 1} N_C}{2}
\end{align}
where
\begin{align*}
    G_M &= \frac{2}{a} G C_{2,a}^{\frac{2}{a}-1}, N_C = (C^2+1)(A+2B\norm{x^*}^2) \\
    \nu_1 &= C G_M \left( \frac{C_{2,a}^{\frac{2}{a}}}{C_{1,a}^{\frac{2}{a}}} + 2C_{2,a}^{\frac{2}{a}}\mu_0 \right), \nu_2 = G_M^2 \left( \frac{1}{C_{1,a}^{\frac{2}{a}}} + 2\mu_0 \right),\\
    \nu_3 &= (C^2+1)(2B+1) \left(\mu_0^2 G_M^2 + 1\right) \left( \frac{1}{2C_{1,a}^{\frac{2}{a}}} + \mu_0 \right), \nu_4 = \frac{2 \gamma\left(\frac{C_1}{C_2}\right)^{c-1+\frac{2}{a}}}{a\left(1 + \mu_0 G_M\right)^2},
\end{align*}
and the smoothness parameter $\mu_k$ is positive and decreasing.
\end{prop}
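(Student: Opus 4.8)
\textbf{Proof proposal for Proposition~\ref{prop: one-iterate-bound-adaptive-mu}.}
The plan is to follow the three-step template outlined in Section~\ref{sec:proof-outline}, but now tracking the \emph{two} sources of variation in the potential $M_{\mu_k}(x_k)$: the change of the iterate $x_k \to x_{k+1}$ and the change of the smoothness parameter $\mu_k \to \mu_{k+1}$. First I would write the one-step increment as a telescoping sum through an intermediate term:
\begin{align*}
M_{\mu_{k+1}}(x_{k+1}) - M_{\mu_k}(x_k) = \underbrace{\big(M_{\mu_{k+1}}(x_{k+1}) - M_{\mu_{k+1}}(x_k)\big)}_{\text{change in }x} + \underbrace{\big(M_{\mu_{k+1}}(x_k) - M_{\mu_k}(x_k)\big)}_{\text{change in }\mu}.
\end{align*}
For the second piece I would invoke Lemma~\ref{mu-derivative-bound} with $\mu = \mu_{k+1} < \mu_k = \mu'$, which gives $M_{\mu_{k+1}}(x_k) - M_{\mu_k}(x_k) \le (\mu_k - \mu_{k+1}) G_M^2 \norm{x_k - x^*}^2$, and then convert $\norm{x_k-x^*}^2$ into a multiple of $M_{\mu_k}(x_k)$ using Lemma~\ref{lemma: envelope-properties} (the bound $R(x) \le (\tfrac{C_{2,a}^{2/a}}{C_{1,a}^{2/a}} + 2C_{2,a}^{2/a}\mu)M_\mu(x)$ together with $C_{1,a}^{2/a}\norm{x-x^*}^2 \le R(x)$). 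Since $\mu_k$ is decreasing and bounded by $\mu_0$, this produces exactly the term $(\mu_k-\mu_{k+1})\nu_2 M_{\mu_k}(x_k)$ with $\nu_2 = G_M^2(\tfrac{1}{C_{1,a}^{2/a}} + 2\mu_0)$.

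For the first piece I would use that $M_{\mu_{k+1}}$ is $\tfrac{1}{\mu_{k+1}}$-smooth (Lemma~\ref{lemma: envelope-properties}) to get the descent inequality $M_{\mu_{k+1}}(x_{k+1}) \le M_{\mu_{k+1}}(x_k) + \langle \nabla M_{\mu_{k+1}}(x_k), x_{k+1}-x_k\rangle + \tfrac{1}{2\mu_{k+1}}\norm{x_{k+1}-x_k}^2$, substitute $x_{k+1}-x_k = \alpha_k(F(x_k)+w_k)$, and take conditional expectation on $\mathcal F_k$; the unbiasedness from Assumption~\ref{assumption: noise} kills the noise in the inner product, leaving $\alpha_k \langle \nabla M_{\mu_{k+1}}(x_k), F(x_k)\rangle + \tfrac{\alpha_k^2}{2\mu_{k+1}}\E[\norm{F(x_k)+w_k}^2 \mid \mathcal F_k]$. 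The drift term is handled by Lemma~\ref{lemma: moreau-negative-drift-2} applied at parameter $\mu_{k+1}$: it yields a negative term $-\tfrac{2\gamma(C_1/C_2)^{c-1+2/a}}{a(1+\mu_{k+1}G_M)^2}M_{\mu_{k+1}}(x_k)^{\frac{a(c-1)}{2}+1}$ plus a positive term $\mu_{k+1}CG_M(\tfrac{C_{2,a}^{2/a}}{C_{1,a}^{2/a}}+2C_{2,a}^{2/a}\mu_{k+1})M_{\mu_{k+1}}(x_k)$; bounding $\mu_{k+1}\le\mu_k$ and $\mu_{k+1}\le\mu_0$ in the denominators and the coefficients gives the $\alpha_k\mu_k\nu_1 M_{\mu_k}(x_k)$ term with $\nu_1$ as stated and the $-\alpha_k\nu_4 M_{\mu_k}(x_k)^{\frac{a(c-1)}{2}+1}$ term with $\nu_4 = \tfrac{2\gamma(C_1/C_2)^{c-1+2/a}}{a(1+\mu_0 G_M)^2}$. (One must also replace $M_{\mu_{k+1}}(x_k)$ by $M_{\mu_k}(x_k)$ where needed, which only costs constant factors since $M_{\mu_{k+1}}(x_k)\le M_{\mu_k}(x_k)\cdot\text{const}$ and in the negative direction monotonicity goes the right way up to the envelope comparison bounds.)

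For the variance term I would bound $\E[\norm{F(x_k)+w_k}^2\mid\mathcal F_k] \le (C^2+2B)\norm{x_k-x^*}^2 + (A+2B\norm{x^*}^2)$ exactly as in the chain of inequalities (a)--(f) in the proof of Proposition~\ref{prop: one-iterate-bound-exponential} (triangle inequality, Assumption~\ref{assumption: F-lipschitz}, Cauchy--Schwarz, the noise bound in Assumption~\ref{assumption: noise}, and $\norm{x_k}^2 \le 2\norm{x_k-x^*}^2 + 2\norm{x^*}^2$). The $\norm{x_k-x^*}^2$ part is again converted to $M_{\mu_k}(x_k)$ via Lemma~\ref{lemma: envelope-properties}, contributing to $\nu_3$, while the constant part becomes $\tfrac{\alpha_k^2}{\mu_{k+1}}(A+2B\norm{x^*}^2)$; using $\mu_{k+1} \ge \mu_k\cdot\tfrac{\mu_{k+1}}{\mu_k}$ and that $\alpha_k\approx\mu_k^2$ in the intended regime, one reroutes $\tfrac{1}{\mu_{k+1}}$ into $\tfrac{\mu_0^2 G_M^2+1}{\mu_k}$ up to the constant $N_C = (C^2+1)(A+2B\norm{x^*}^2)$, giving the stated additive term $\tfrac{\alpha_k^2}{\mu_k}\cdot\tfrac{(\mu_0^2G_M^2+1)N_C}{2}$. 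The main obstacle I anticipate is precisely this bookkeeping of the mismatch between $\mu_k$ and $\mu_{k+1}$ throughout — smoothness is governed by $\mu_{k+1}$, the drift lemma is cleanest at a single parameter, and the target bound is stated purely in terms of $\mu_k$ — so one has to repeatedly trade $\mu_{k+1}$ for $\mu_k$ (or $\mu_0$) in the right direction and absorb the resulting ratios into the constants $\nu_1,\dots,\nu_4$; getting these constants to come out exactly as claimed, and verifying the replacement is valid in each direction (upper bounds on positive terms, lower bounds on the magnitude of the negative term), is where the real care is needed, though each individual step is elementary given Lemmas~\ref{lemma: envelope-properties}, \ref{lemma: moreau-gradient-bound}, \ref{mu-derivative-bound} and \ref{lemma: moreau-negative-drift-2}.
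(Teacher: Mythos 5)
Your overall strategy is the same as the paper's — telescope the increment of $M_{\mu_k}(x_k)$ through an intermediate term, use the smoothness descent inequality plus Lemma~\ref{lemma: moreau-negative-drift-2} for the drift, Lemma~\ref{mu-derivative-bound} for the parameter change, and Lemma~\ref{lemma: envelope-properties} to convert $\norm{x_k-x^*}^2$ into $M_{\mu_k}(x_k)$ — but you telescope in the opposite order. The paper first moves $x_k\to x_{k+1}$ at the \emph{old} parameter, writing $M_{\mu_k}(x_{k+1}) \le M_{\mu_k}(x_k) + \langle \nabla M_{\mu_k}(x_k), x_{k+1}-x_k\rangle + \tfrac{1}{2\mu_k}\norm{x_{k+1}-x_k}^2$, and only then applies Lemma~\ref{mu-derivative-bound} at the point $x_{k+1}$ to pass from $M_{\mu_k}(x_{k+1})$ to $M_{\mu_{k+1}}(x_{k+1})$. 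This keeps every quantity in the smoothness inequality, the drift lemma, and the variance term expressed at parameter $\mu_k$, so no conversion between $M_{\mu_{k+1}}$ and $M_{\mu_k}$ is ever needed; the price is that the parameter-change term involves $\norm{x_{k+1}-x^*}^2$, which must be split into $\norm{x_{k+1}-x_k}^2$ and $\norm{x_k-x^*}^2$. Your order makes that parameter-change term cleaner (it lands directly on $\norm{x_k-x^*}^2$), but pushes all of the $x$-step analysis to parameter $\mu_{k+1}$.

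That reversal creates one concrete gap. Your quadratic remainder from smoothness is $\tfrac{\alpha_k^2}{2\mu_{k+1}}\E[\norm{F(x_k)+w_k}^2\mid\cF_k]$, whereas the target bound has $\tfrac{\alpha_k^2}{\mu_k}$ in front of both the $\nu_3$ term and the additive noise term. Converting $\tfrac{1}{\mu_{k+1}}$ into a constant times $\tfrac{1}{\mu_k}$ requires a uniform bound on the ratio $\mu_k/\mu_{k+1}$, which the proposition does not assume — its only hypothesis on the schedule is that $\mu_k$ is positive and decreasing, under which $\mu_k/\mu_{k+1}$ can be arbitrarily large. Your appeal to ``$\alpha_k\approx\mu_k^2$ in the intended regime'' smuggles in the specific polynomial schedule from Theorem~\ref{finite-time-corollary-3}, so as written your argument proves a weaker statement than the proposition. (A secondary, more cosmetic issue: replacing $M_{\mu_{k+1}}(x_k)$ by $M_{\mu_k}(x_k)$ in the positive drift contribution costs the factor $\tfrac{C_{2,a}^{2/a}}{C_{1,a}^{2/a}} + 2C_{2,a}^{2/a}\mu_0$ from Lemma~\ref{lemma: envelope-properties}, so your $\nu_1$ and $\nu_3$ would not come out as the stated constants, only as constants of the same form. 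The negative drift term is fine, since $M_{\mu_{k+1}}(x_k)\ge M_{\mu_k}(x_k)$ by monotonicity of the envelope in $\mu$, as you note.) The fix is simply to adopt the paper's ordering — smoothness at $\mu_k$ first, parameter change at $x_{k+1}$ second — after which every step of your argument goes through verbatim.
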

\begin{proof}
    From $\frac{1}{\mu_{k}}$-smoothness of $M_{\mu_{k}}$, we have:
    \begin{align*}
        M_{\mu_{k}}(x_{k+1}) &\leq M_{\mu_{k}}(x_k) + \langle \nabla M_{\mu_{k}}(x_k), x_{k+1}-x_k \rangle + \frac{\norm{x_{k+1}-x_k}^2}{2\mu_{k}}
    \end{align*}
    From Lemma \ref{mu-derivative-bound}, we have:
    \begin{align*}
        M_{\mu_{k+1}(x_{k+1})} \leq M_{\mu_k}(x_{k+1}) +  (\mu_k-\mu_{k+1})G_M^2 \norm{x_{k+1}-x^*}^2
    \end{align*}
    Thus:
    \begin{align*}
        M_{\mu_{k+1}}(x_{k+1)} &\leq M_{\mu_{k}}(x_k) + \underbrace{\langle \nabla M_{\mu_{k}}(x_k), x_{k+1}-x_k \rangle}_{\text{Drift term}} + \underbrace{(\mu_k-\mu_{k+1})G_M^2 \norm{x_{k+1}-x^*}^2}_{\text{Partial derivative of } M \text{ w.r.t } \mu} + \underbrace{\frac{\norm{x_{k+1}-x_k}^2}{2\mu_{k}}}_{\text{Noise term}}
    \end{align*}
    Taking expectations on both sides and denote $E_{k+1} = \mathbb{E}[M_{\mu_{k+1}}(x_{k+1}) | \mathcal{F}_k]$, we have:
    \begin{align*}
        &E_{k+1} \leq M_{\mu_{k}}(x_k) + \alpha_k \langle \nabla M_{\mu_{k}}(x_k), F(x_k) \rangle + (\mu_k-\mu_{k+1})G_M^2 \norm{x_{k+1}-x^*}^2 + \frac{\E\sqbr{\norm{x_{k+1}-x_k}^2 | \cF_k}}{2\mu_{k}}.
    \end{align*}
    The RHS can be further bounded as:
    \begin{align*}
        E_{k+1}
        &\overset{(a)}{\leq} M_{\mu_{k}}(x_k) + \alpha_k \left[ \frac{-2\gamma\left(\frac{C_1}{C_2}\right)^{c-1+\frac{2}{a}}}{a\left(1 + \mu_{k} G_M\right)^2} M_{\mu_{k}}(x_k)^{\frac{a(c-1)}{2}+1} + \mu_{k} C G_M \left( \frac{C_{2,a}^{\frac{2}{a}}}{C_{1,a}^{\frac{2}{a}}} + 2C_{2,a}^{\frac{2}{a}}\mu_{k} \right)M_{\mu_{k}}(x_k)\right] \\
        &+ (\mu_k-\mu_{k+1}) G_M^2\br{\frac{\norm{x_{k+1}-x_k}^2+\norm{x_k-x^*}^2}{2}} + \frac{\E\sqbr{\norm{x_{k+1}-x_k}^2 | \cF_k}}{2\mu_{k}} \\
        &\overset{(b)}{\leq} M_{\mu_k}(x_k)\left[1 + \alpha_k \mu_k C G_M \left( \frac{C_{2,a}^{\frac{2}{a}}}{C_{1,a}^{\frac{2}{a}}} + 2C_{2,a}^{\frac{2}{a}}\mu_{k} \right) + (\mu_k-\mu_{k+1})G_M^2\left( \frac{1}{C_{1,a}^{\frac{2}{a}}} + 2\mu_k \right)\right] \\
        &- \frac{2\alpha_k \gamma\left(\frac{C_1}{C_2}\right)^{c-1+\frac{2}{a}}}{a\left(1 + \mu_k G_M\right)^2} M_{\mu_{k}}(x_k)^{\frac{a(c-1)}{2}+1} + \left[(\mu_k-\mu_{k+1})G_M^2 + \frac{1}{\mu_k}\right]\frac{\E\sqbr{\norm{x_{k+1}-x_k}^2 | \cF_k}}{2} \\
        &\overset{(c)}{\leq} M_{\mu_k}(x_k)\left[1 + \alpha_k \mu_k C G_M \left( \frac{C_{2,a}^{\frac{2}{a}}}{C_{1,a}^{\frac{2}{a}}} + 2C_{2,a}^{\frac{2}{a}}\mu_{k} \right) + (\mu_k-\mu_{k+1})G_M^2\left( \frac{1}{C_{1,a}^{\frac{2}{a}}} + 2\mu_k \right)\right] \\
        &-\frac{2\alpha_k \gamma\left(\frac{C_1}{C_2}\right)^{c-1+\frac{2}{a}}}{a\left(1 + \mu_k G_M\right)^2} M_{\mu_{k}}(x_k)^{\frac{a(c-1)}{2}+1} + \alpha_k^2\left[(\mu_k-\mu_{k+1})G_M^2 + \frac{1}{\mu_k}\right]\frac{\E\sqbr{\br{C\norm{x_k-x^*}+\norm{w_k}}^2| \cF_k}}{2}
    \end{align*}
    where the inequality (a) follows from Lemma \ref{lemma: moreau-negative-drift-2}, inequality (b) follows from Lemma \ref{lemma: envelope-properties}, inequality (c) follows from applying the triangle inequality, and Assumption \ref{assumption: F-lipschitz} on the noise term. The RHS can be further upper-bounded as:
    \begin{align*}
        E_{k+1}
        &\overset{(d)}{\leq} M_{\mu_k}(x_k)\left[1 + \alpha_k \mu_k C G_M \left( \frac{C_{2,a}^{\frac{2}{a}}}{C_{1,a}^{\frac{2}{a}}} + 2C_{2,a}^{\frac{2}{a}}\mu_{k} \right) + (\mu_k-\mu_{k+1}) G_M^2 \left( \frac{1}{C_{1,a}^{\frac{2}{a}}} + 2\mu_k \right)\right] \\
        &-\frac{2\alpha_k \gamma\left(\frac{C_1}{C_2}\right)^{c-1+\frac{2}{a}}}{a\left(1 + \mu_k G_M\right)^2} M_{\mu_{k}}(x_k)^{\frac{a(c-1)}{2}+1} + \alpha_k^2\left[(\mu_k-\mu_{k+1})G_M^2 + \frac{1}{\mu_k}\right]\frac{(C^2+1)\br{\norm{x_k-x^*}^2+\E\sqbr{\norm{w_k}^2 | \cF_k}}}{2} \\
        &\overset{(e)}{\leq} \left[1 + \alpha_k \mu_k C G_M \left( \frac{C_{2,a}^{\frac{2}{a}}}{C_{1,a}^{\frac{2}{a}}} + 2C_{2,a}^{\frac{2}{a}}\mu_{k} \right) + (\mu_k-\mu_{k+1}) G_M^2 \left( \frac{1}{C_{1,a}^{\frac{2}{a}}} + 2\mu_k \right)\right] M_{\mu_k}(x_k) \\
        &+  \frac{\alpha_k^2(C^2+1)}{2} \left((\mu_k-\mu_{k+1})G_M^2 + \frac{1}{\mu_k}\right) \left( \frac{1}{C_{1,a}^{\frac{2}{a}}} + 2\mu_k \right) M_{\mu_k}(x_k) - \frac{2\alpha_k \gamma\left(\frac{C_1}{C_2}\right)^{c-1+\frac{2}{a}}}{a\left(1 + \mu_k G_M\right)^2} M_{\mu_{k}}(x_k)^{\frac{a(c-1)}{2}+1} \\
        &+ \alpha_k^2 \left[(\mu_k-\mu_{k+1})G_M^2 +  \frac{1}{\mu_k}\right]\frac{(C^2+1)(A+2B\norm{x_k-x^*}^2+2B\norm{x^*}^2)}{2}
    \end{align*}
     where the inequality (d) is obtained from applying Cauchy-Schwarz on the noise term and (e) follows from Assumption \ref{assumption: noise} and Lemma \ref{lemma: envelope-properties}. The RHS can be further upper-bounded as follows:
    \begin{align*}
        %&\mathbb{E}[M_{\mu_{k+1}}(x_{k+1}) | \mathcal{F}_k]\\
        %&\leq M_{\mu_{k}}(x_k) + \alpha_k \langle \nabla M_{\mu_{k}}(x_k), F(x_k) \rangle + (\mu_k-\mu_{k+1})\left(\frac{2G C_{2,a}^{\frac{2}{a}-1}}{a}\right)^2 \norm{x_{k+1}-x^*}^2 + \frac{\norm{x_{k+1}-x_k}^2}{2\mu_{k}} \\
        E_{k+1}
        &\overset{(f)}{\leq} M_{\mu_k}(x_k)\left[1+ \alpha_k \mu_k C G_M \left( \frac{C_{2,a}^{\frac{2}{a}}}{C_{1,a}^{\frac{2}{a}}} + 2C_{2,a}^{\frac{2}{a}}\mu_{k} \right) + (\mu_k-\mu_{k+1})G_M^2 \left( \frac{1}{C_{1,a}^{\frac{2}{a}}} + 2\mu_k \right)\right] \\
        &+ \frac{\alpha_k^2(C^2+1)(2B+1)}{2} \left((\mu_k-\mu_{k+1})G_M^2 + \frac{1}{\mu_k}\right) \left( \frac{1}{C_{1,a}^{\frac{2}{a}}} + 2\mu_k \right) M_{\mu_k}(x_k) \\
        &- \frac{2\alpha_k \gamma\left(\frac{C_1}{C_2}\right)^{c-1+\frac{2}{a}}}{a\left(1 + \mu_k G_M\right)^2} M_{\mu_{k}}(x_k)^{\frac{a(c-1)}{2}+1} + \frac{\alpha_k^2}{2} \left[(\mu_k-\mu_{k+1})G_M^2 + \frac{1}{\mu_k}\right]\underbrace{(C^2+1)(A+2B\norm{x^*}^2)}_{N_C} \\
        &\overset{(g)}{\leq} M_{\mu_k}(x_k)\left[1+ \alpha_k \mu_k C G_M \left( \frac{C_{2,a}^{\frac{2}{a}}}{C_{1,a}^{\frac{2}{a}}} + 2C_{2,a}^{\frac{2}{a}}\mu_0 \right) + (\mu_k-\mu_{k+1})G_M^2 \left( \frac{1}{C_{1,a}^{\frac{2}{a}}} + 2\mu_0 \right)\right] \\
        &+ \frac{\alpha_k^2(C^2+1)(2B+1) \left(\mu_0^2 G_M^2 + 1\right)}{2\mu_k} \left( \frac{1}{C_{1,a}^{\frac{2}{a}}} + 2\mu_0 \right) M_{\mu_k}(x_k) \\
        &- \frac{2\alpha_k \gamma\left(\frac{C_1}{C_2}\right)^{c-1+\frac{2}{a}}}{a\left(1 + \mu_0 G_M\right)^2} M_{\mu_{k}}(x_k)^{\frac{a(c-1)}{2}+1} + \frac{\alpha_k^2}{\mu_k} \frac{\br{\mu_0^2 G_M^2 + 1} N_C}{2} \\
        &= \left(1 + \alpha_k \mu_k \nu_1 + (\mu_k-\mu_{k+1})\nu_2 + \frac{\alpha_k^2\nu_3}{\mu_k} \right)M_{\mu_k}(x_k) - \alpha_k \nu_4 M_{\mu_k}(x_k)^{\frac{a(c-1)}{2}+1} + \frac{\alpha_k^2}{\mu_k} \frac{\br{\mu_0^2 G_M^2 + 1} N_C}{2},
    \end{align*}
    where the inequality (f) follows from Lemma \ref{lemma: envelope-properties} and the last inequality (g) follows from the observation that $0 <  \mu_k \leq \mu_0$.
\end{proof}
However, a time-varying Moreau envelop as shown above will produce an additional $\mu$-differential term $(\mu_k-\mu_{k+1})\nu_2M_{\mu_k}(x_k)$ in the drift in addition to making the noise term $\frac{\alpha_k^2}{\mu_k} \frac{\br{\mu_0^2 G_M^2 + 1} N_C}{2}$ larger as $k$ gets larger. Thus, we need $\mu_k$ to converge to $0$ slowly enough in order to not make such term explode while ensure a sufficiently good convergence. A rough estimation gives us $\alpha_k \mu_k \approx \frac{\alpha_k^2}{\mu_k} \Leftrightarrow \alpha_k \approx \mu_k^2$ is the choice that offers the best trade-off. From here, we can obtain the finite-time bounds of the algorithm in the non-smooth sub-exponential case as follows.
\begin{theorem}
    \label{finite-time-corollary-3-full}
    Under Assumptions \ref{assumption: noise}, \ref{assumption: F-lipschitz}, \ref{assumption: polynomial-growth}, \ref{assumption: generalized-gradient-bound}, \ref{assumption: bounded-iterates} and \ref{assumption: clarke-asymptotic-stability} hold. With the step size $\alpha_k = \frac{\alpha}{(k+K)^{\xi}}$ and the choice of parameter $\mu_k = \frac{\mu}{(k+K)^{0.5\xi}}$ where:
    \begin{align*}
        c \geq 1, d_{a,c} = a(c-1)/2+1, \xi \leq \frac{2d}{3}, d = \frac{3d_{a,c}}{3d_{a,c}-1} \\
        \Phi = 0.5 d_{a,c} \alpha^{2-\frac{1}{d_{a,c}}} \nu_4^{\frac{1}{d_{a,c}}} \left(\frac{\br{\mu_0^2 G_M^2 + 1} N_C}{2\mu}\right)^{1-\frac{1}{d_{a,c}}}, \omega_* = \frac{d_{a,c} \alpha^2 \br{\mu_0^2 G_M^2 + 1} N_C}{2\mu},
    \end{align*}
    and $N_C, G_M$ are defined as in Proposition \ref{prop: one-iterate-bound-adaptive-mu}, we have:
    \begin{align*}
    %M_{\mu_0}(x_0) \leq C_{2,a}^{\frac{2}{a}}\norm{x_0-x^*}^2
    &\textbf{For $\xi = \frac{2d}{3}$}: \\
    &E_k \leq E_0 C_{2,a}^{\frac{2}{a}}\br{\frac{1}{C_{1,a}^{\frac{2}{a}}} + 2\mu_0}\left(\frac{K}{k+K}\right)^{\Phi} + 
    \br{\frac{1}{C_{1,a}^{\frac{2}{a}}} + 2\mu_0} \times
    \begin{cases}
        \frac{1}{(k+K)^{\Phi}}\frac{2^d\alpha^{\frac{3}{2}}\omega_*}{d-1-\Phi} &\text{ if } \Phi \in (0,d-1)\\
        \frac{ \log(k+K)}{(k+K)^{d-1}} \times 2^d\alpha^{\frac{3}{2}} \omega_* &\text{ if } \Phi = d-1 \\
        \frac{1}{(k+K)^{d-1}}\frac{e2^d\alpha^{\frac{3}{2}} \omega_*}{\Phi-(d-1)} &\text{ if } \Phi > d-1
    \end{cases}\\
    &\textbf{For $\xi \in \left(0, \frac{2d}{3}\right)$}: \\
    &E_k \leq E_0 C_{2,a}^{\frac{2}{a}}\br{\frac{1}{C_{1,a}^{\frac{2}{a}}} + 2\mu_0} \exp{\left[ -\frac{\Phi ((k+K)^{1-\frac{(3d_{a,c}-1)\xi}{2d_{a,c}}}-K^{1-\frac{(3d_{a,c}-1)\xi}{2 d_{a,c}}}))}{1-\frac{(3d_{a,c}-1)\xi}{2d_{a,c}}} \right]} + \frac{2\alpha^{1.5} \omega_* \br{\frac{1}{C_{1,a}^{\frac{2}{a}}} + 2\mu_0}}{\Phi (k+K)^{\frac{\xi}{2d_{a,c}}}} \qquad\qquad\qquad \\
    &\textbf{For $\xi = 0$}: \\
    &E_k \leq E_0 C_{2,a}^{\frac{2}{a}}\br{\frac{1}{C_{1,a}^{\frac{2}{a}}} + 2\mu_0}(1-\Phi)^k + 
    \frac{2 \omega_* \br{\frac{1}{C_{1,a}^{\frac{2}{a}}} + 2\mu_0}}{\Phi}
    \end{align*}
Here, we denote $E_k = \E[\norm{x_k-x^*}^2] \forall k \in \Z^+$ and $E_0 = \norm{x_0-x^*}^2$.
\end{theorem}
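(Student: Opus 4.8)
The plan is to instantiate the three-step template of Section~\ref{sec:proof-outline} for the time-varying Moreau envelope $M_{\mu_k}$: take Proposition~\ref{prop: one-iterate-bound-adaptive-mu} as the input, reduce it to a clean contracting recursion, unroll that recursion, and only at the end convert the bound on $\mathbb{E}[M_{\mu_k}(x_k)]$ into one on $E_k=\mathbb{E}[\norm{x_k-x^*}_2^2]$ via Lemma~\ref{lemma: envelope-properties}. \textbf{Step 1 (reduce the one-iterate bound).} From Proposition~\ref{prop: one-iterate-bound-adaptive-mu} the term to tame is the negative drift $-\alpha_k\nu_4 M_{\mu_k}(x_k)^{d_{a,c}}$, $d_{a,c}=a(c-1)/2+1\geq1$. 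When $d_{a,c}=1$ it is already linear; when $d_{a,c}>1$ I would combine it by weighted AM--GM with $(d_{a,c}-1)$ copies of the noise constant $\tfrac{\alpha_k^2}{\mu_k}\cdot\tfrac{(\mu_0^2G_M^2+1)N_C}{2}$ to get a lower bound of the form $2\phi\,(k+K)^{-\xi c'}M_{\mu_k}(x_k)$, with $c'=\tfrac{3d_{a,c}-1}{2d_{a,c}}$; the $\alpha$-powers $\alpha_k^{1/d_{a,c}}(\alpha_k^2/\mu_k)^{1-1/d_{a,c}}=\alpha_k^{2-1/d_{a,c}}\mu_k^{-(1-1/d_{a,c})}$ collapse under $\alpha_k=\alpha(k+K)^{-\xi}$, $\mu_k=\mu(k+K)^{-\xi/2}$ into precisely the constant $\phi$ of the statement. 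I would then spend one $\phi(k+K)^{-\xi c'}M_{\mu_k}$ on genuine contraction and the other on absorbing the three positive perturbations $\alpha_k\mu_k\nu_1$, $(\mu_k-\mu_{k+1})\nu_2$, $\tfrac{\alpha_k^2\nu_3}{\mu_k}$: with the coupling $\mu_k\asymp\sqrt{\alpha_k}$ (exactly the exponent $0.5\xi$ chosen for $\mu_k$) these decay as $(k+K)^{-3\xi/2}$, $(k+K)^{-\xi/2-1}$, $(k+K)^{-3\xi/2}$ respectively, each at least as fast as $(k+K)^{-\xi c'}$ since $1\leq c'<3/2$ and $\xi c'\leq1$ (because $\xi\leq\tfrac{2d}{3}=\tfrac1{c'}$), so for $K$ large the net multiplier is $\leq1-\phi(k+K)^{-\xi c'}$ and
\begin{align*}
\mathbb{E}[M_{\mu_{k+1}}(x_{k+1})\,|\,\mathcal{F}_k]\ \leq\ \big(1-\phi\,(k+K)^{-\xi c'}\big)\,M_{\mu_k}(x_k)\ +\ \omega\,(k+K)^{-3\xi/2},
\end{align*}
where $\omega=\tfrac{d_{a,c}\alpha^2(\mu_0^2G_M^2+1)N_C}{2\mu}$ is the per-step noise $\tfrac{\alpha_k^2}{\mu_k}\cdot\tfrac{d_{a,c}(\mu_0^2G_M^2+1)N_C}{2}$ stripped of its $(k+K)$-factor.

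\textbf{Step 2 (unroll and estimate $T_1,T_2$).} Iterating and taking total expectations,
\begin{align*}
\mathbb{E}[M_{\mu_k}(x_k)]\ \leq\ \underbrace{\prod_{i=0}^{k-1}\big(1-\phi(i+K)^{-\xi c'}\big)}_{T_1}\,M_{\mu_0}(x_0)\ +\ \omega\,\underbrace{\sum_{i=0}^{k-1}(i+K)^{-3\xi/2}\prod_{j=i+1}^{k-1}\big(1-\phi(j+K)^{-\xi c'}\big)}_{T_2}.
\end{align*}
The threshold $\xi c'=1$ occurs exactly at $\xi=\tfrac{2d_{a,c}}{3d_{a,c}-1}=\tfrac{2d}{3}$, the critical regime. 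Using $1-t\leq e^{-t}$ and the monotone integral comparison $\int_a^{b+1}h\leq\sum_{n=a}^b h(n)\leq\int_{a-1}^b h$: for $\xi=\tfrac{2d}{3}$ one gets $T_1\leq(K/(k+K))^{\phi}$, and since then $3\xi/2=d$ and $\prod_{j=i+1}^{k-1}(1-\phi/(j+K))\leq(\tfrac{i+1+K}{k+K})^{\phi}$, one finds $T_2$ is $O\big((k+K)^{-\phi}\sum_i(i+K)^{-(d-\phi)}\big)$, hence $O((k+K)^{-\phi})$, $O((k+K)^{-(d-1)}\log k)$, or $O((k+K)^{-(d-1)})$ according as $\phi<d-1$, $\phi=d-1$, $\phi>d-1$ --- the three branches. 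For $\xi\in(0,\tfrac{2d}{3})$ one gets $T_1\leq\exp\!\big[-\tfrac{\phi}{1-\xi c'}\big((k+K)^{1-\xi c'}-K^{1-\xi c'}\big)\big]$ with $1-\xi c'=1-\tfrac{(3d_{a,c}-1)\xi}{2d_{a,c}}$, and $T_2$ is controlled by induction: one shows the auxiliary sequence $u_0=0$, $u_{k+1}=(1-\phi(k+K)^{-\xi c'})u_k+\omega(k+K)^{-3\xi/2}$ obeys $u_k\leq D(k+K)^{-\xi/(2d_{a,c})}$ for a suitable $D$ and all large $K$, the exponent $\tfrac{\xi}{2d_{a,c}}=\tfrac{3\xi}{2}-\xi c'$ being forced by balancing $(\tfrac{k+K}{k+1+K})^{\beta}\geq1-\tfrac{\beta}{k+K}$ against the noise rate. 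For $\xi=0$ everything is geometric: $T_1=(1-\phi)^k$ and $T_2\leq\omega/\phi$, giving convergence to a ball.

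\textbf{Step 3 (pass to $\norm{x_k-x^*}_2^2$; the main obstacle).} Lemma~\ref{lemma: envelope-properties} gives $M_{\mu_k}(x_k)\geq\norm{x_k-x^*}_2^2/(C_{1,a}^{-2/a}+2\mu_k)\geq\norm{x_k-x^*}_2^2/(C_{1,a}^{-2/a}+2\mu_0)$ since $\mu_k\leq\mu_0$, and $M_{\mu_0}(x_0)\leq R(x_0)\leq C_{2,a}^{2/a}\norm{x_0-x^*}_2^2$; multiplying the Step~2 bound through by $(C_{1,a}^{-2/a}+2\mu_0)$ and substituting these two facts produces exactly the stated bounds, with the prefactor $E_0\big(C_{2,a}^{2/a}/C_{1,a}^{2/a}+2\mu_0 C_{2,a}^{2/a}\big)$ on the $T_1$-term and the constant $\omega$ on the $T_2$-term. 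The hard part is Step~1: one must simultaneously (a) run AM--GM at the correct exponent $c'=\tfrac{3d_{a,c}-1}{2d_{a,c}}$ so the power term $M_{\mu_k}^{d_{a,c}}$ becomes linear in $M_{\mu_k}$, and (b) verify that the smoothing-schedule exponent $0.5\xi$ is precisely the one making both the $\mu$-drift $(\mu_k-\mu_{k+1})\nu_2 M_{\mu_k}$ and the noise-inflation $\alpha_k^2\nu_3/\mu_k$ asymptotically negligible against the absorbed drift $\phi(k+K)^{-\xi c'}M_{\mu_k}$ --- a slower $\mu_k$ weakens the contraction, a faster $\mu_k$ blows up $T_2$ --- all while keeping the constants $\nu_1,\dots,\nu_4,N_C,G_M$ of Proposition~\ref{prop: one-iterate-bound-adaptive-mu} straight. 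Once that bookkeeping is done, the casework on $\phi$ versus $d-1$ and the induction for $T_2$ are routine and parallel to the proof of Theorem~\ref{finite-time-corollary-1-full} and the smooth sub-exponential case.
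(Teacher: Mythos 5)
Your proposal follows essentially the same route as the paper's proof: it starts from Proposition~\ref{prop: one-iterate-bound-adaptive-mu}, linearizes the drift term $-\alpha_k\nu_4 M_{\mu_k}^{d_{a,c}}$ by AM--GM against $(d_{a,c}-1)$ copies of the noise term to produce the contraction factor $\phi(k+K)^{-(3d_{a,c}-1)\xi/(2d_{a,c})}$, absorbs the perturbations $\alpha_k\mu_k\nu_1$, $(\mu_k-\mu_{k+1})\nu_2$, $\alpha_k^2\nu_3/\mu_k$ using the coupling $\mu_k\asymp\sqrt{\alpha_k}$, unrolls the recursion, bounds $T_1,T_2$ by integral comparison with the same casework on $\phi$ versus $d-1$ and the same induction for subcritical $\xi$, and finally converts to $\norm{x_k-x^*}_2^2$ via Lemma~\ref{lemma: envelope-properties}. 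The constants, exponents, and critical threshold $\xi=\tfrac{2d}{3}$ all match the paper's argument, so this is the same proof in outline form.
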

\begin{proof}
Let $E_{k} = \mathbb{E}[M_{\mu_{k}}(x_{k}) | \mathcal{F}_{k-1}]$ and $d_{a,c} = \frac{a(c-1)}{2}+1$. Recall that from Proposition \ref{prop: one-iterate-bound-adaptive-mu}, we have for $k \geq 1$:
\begin{align*}
    E_{k} &\leq \left(1 + \alpha_{k-1} \mu_{k-1} \nu_1 + (\mu_{k-1}-\mu_{k})\nu_2 + \frac{\alpha_{k-1}^2\nu_3}{\mu_{k-1}} \right)M_{\mu_{k-1}}(x_{k-1}) - \alpha_{k-1} \nu_4 M_{\mu_{k-1}}(x_{k-1})^{d_{a,c}} \\
    &+ \frac{\alpha_{k-1}^2}{\mu_{k-1}}\frac{\br{\mu_0^2 G_M^2 + 1} N_C}{2} \\
    &= \left(1 + \frac{\alpha\mu\nu_1 +  \frac{\alpha^2\nu_3}{\mu}}{(k-1+K)^{1.5\xi}} + \frac{\xi \mu \nu_2}{(k-1+K)^{0.5\xi+1}}\right)M_{\mu_{k-1}}(x_{k-1}) - \frac{\alpha\nu_4}{(k-1+K)^{\xi}} M_{\mu_{k-1}}(x_{k-1})^{d_{a,c}} \\
    &+ \frac{\alpha^2 \br{\mu_0^2 G_M^2 + 1} N_C}{2\mu}\frac{1}{(k-1+K)^{1.5\xi}} \\
    &\leq \left(1 + \frac{\alpha\mu\nu_1 + \xi \mu \nu_2 +  \frac{\alpha^2\nu_3}{\mu}}{(k-1+K)^{1.5\xi}}\right)M_{\mu_{k-1}}(x_{k-1}) - \frac{\alpha\nu_4}{(k-1+K)^{\xi}} M_{\mu_{k-1}}(x_{k-1})^{d_{a,c}} \\
    &+ \frac{\alpha^2 \br{\mu_0^2 G_M^2 + 1} N_C}{2\mu} \frac{1}{(k-1+K)^{1.5\xi}}
\end{align*}
since $\xi \leq 1$. Let $\phi = \alpha^{2-\frac{1}{d_{a,c}}} \nu_4^{\frac{1}{d_{a,c}}} \left(\frac{\br{\mu_0^2 G_M^2 + 1} N_C}{2\mu}\right)^{1-\frac{1}{d_{a,c}}} = \frac{Phi}{0.5 d_{a,c}}, \omega = \frac{\alpha^2 \br{\mu_0^2 G_M^2 + 1} N_C}{2\mu} = \frac{\omega_*}{d_{a,c}}$. By AM-GM, we have the bound 
\begin{align*}
    % (c-1)\frac{\alpha^2 A^*}{\mu}\frac{1}{(k-1+K)^{1.5\xi}} &+ \frac{\alpha\nu_4}{(k-1+K)^{\xi}} M_{\mu_k}(x_k)^c \\
    % &\geq c\alpha^{2-\frac{1}{c}} \nu_4^{\frac{1}{c}} \left(\frac{A^*}{\mu}\right)^{1-\frac{1}{c}} \left( \frac{1}{k-1+K} \right)^{\left(\frac{3}{2}-\frac{1}{2c}\right)\xi} M_{\mu_{k-1}}(x_{k-1})
    &\br{d_{a,c}-1} \frac{\omega}{(k-1+K)^{1.5\xi}} + \frac{\alpha\nu_4}{(k-1+K)^{\xi}} M_{\mu_{k-1}}(x_{k-1})^{d_{a,c}} \geq d_{a,c} \phi \left( \frac{1}{k-1+K} \right)^{\left(\frac{3}{2}-\frac{1}{2d_{a,c}}\right)\xi} M_{\mu_{k-1}}(x_{k-1})
\end{align*}
and thus we obtain:
\begin{align*}
    E_{k} &\leq \left(1 + \frac{\alpha\mu\nu_1 + \xi \mu \nu_2 +  \frac{\alpha^2\nu_3}{\mu}}{(k-1+K)^{1.5\xi}}\right)M_{\mu_{k-1}}(x_{k-1}) + d_{a,c} \frac{\omega}{(k-1+K)^{1.5\xi}} \\
    &- \frac{\alpha\nu_4}{(k-1+K)^{\xi}}M_{\mu_{k-1}}(x_{k-1})^{d_{a,c}} - (d_{a,c}-1) \frac{\omega}{(k-1+K)^{1.5\xi}} \\
    &\leq \left(1 - \frac{d_{a,c} \phi}{\left(k-1+K \right)^{\left(\frac{3}{2}-\frac{1}{2d_{a,c}}\right)\xi}}  + \frac{\alpha\mu\nu_1 + \xi \mu \nu_2 +  \frac{\alpha^2\nu_3}{\mu}}{(k-1+K)^{1.5\xi}}\right)M_{\mu_{k-1}}(x_{k-1}) + \frac{d_{a,c} \omega}{(k-1+K)^{1.5\xi}} \\
    &\leq \left( 1 - \frac{d_{a,c} \phi}{2\left(k-1+K \right)^{\left(\frac{3}{2}-\frac{1}{2d_{a,c}}\right)\xi}} \right)M_{\mu_{k-1}}(x_{k-1}) + \frac{d_{a,c} \omega}{(k-1+K)^{1.5\xi}}.
\end{align*}
From here, we expand this one iterate bound and obtain the inequality:
% \begin{align*}
%     E_{k} &\leq \prod_{i = 0}^{k-1} \left( 1 - \phi \left( \frac{1}{i+K} \right)^{\left(\frac{3}{2}-\frac{1}{2d_{a,c}}\right)\xi} \right) M_{\mu_0}(x_0) + \omega \sum_{i = 0}^{k-1} \frac{1}{(i+K)^{1.5\xi}} \prod_{j = i+1}^{k-1} \left( 1 - \phi \left( \frac{1}{j+K} \right)^{\left(\frac{3}{2}-\frac{1}{2d_{a,c}}\right)\xi} \right).
% \end{align*}
\begin{align*}
    E_{k} &\leq \underbrace{\prod_{i = 0}^{k-1} \left( 1 - \frac{d_{a,c} \phi}{2\br{i+K}^{\left(\frac{3}{2}-\frac{1}{2d_{a,c}}\right)\xi}} \right)}_{T_1} M_{\mu_0}(x_0) + d_{a,c} \underbrace{\sum_{i = 0}^{k-1} \frac{\omega}{(i+K)^{1.5\xi}} \prod_{j = i+1}^{k-1} \left( 1 - \frac{d_{a,c} \phi}{2\br{j+K}^{\left(\frac{3}{2}-\frac{1}{2d_{a,c}}\right)\xi}} \right)}_{T_2}.
\end{align*}
We bound the terms $T_1, T_2$ similarly to other cases. First, we deal with $T_1$. If $\xi = \frac{2d}{3}$ then $T_1 \leq \left(\frac{K}{k+K}\right)^{\phi}$. Otherwise, if $\xi \in \br{0, \frac{2d}{3}}$ (that is $\xi' = \frac{(3d_{a,c}-1)\xi}{2d_{a,c}} \in (0,1)$) then: 
% \textcolor{red}{fix below here, also check $c$ or $d_{a,c}$}
\begin{align*}
    &T_1 \leq \exp{\left[ -\frac{ d_{a,c}\phi((k+K)^{1-\xi'}-K^{1-\xi'}))}{2(1-\xi')} \right]} = \exp{\left[ -\frac{d_{a,c} \phi\br{(k+K+1)^{1-\frac{(3d_{a,c}-1)\xi}{2d_{a,c}}}-K^{1-\frac{(3d_{a,c}-1)\xi}{2d_{a,c}}})}}{2-\frac{(3d_{a,c}-1)\xi}{d_{a,c}}} \right]}.
\end{align*}
For $T_2$, if $\xi = \frac{2d}{3}$ where $d = \frac{3d_{a,c}}{3d_{a,c}-1}$ then we have:
\begin{align*}
    \sum_{i = 0}^{k-1} \frac{1}{(i+K)^{1.5\xi}} \prod_{j = i+1}^{k-1} \left( 1 - \frac{d_{a,c} \phi}{2\br{j+K}^{\left(\frac{3d_{a,c}-1}{2d_{a,c}}\right)\xi}} \right) &= \alpha^{\frac{3}{2}} \sum_{i = 0}^{k-1} \frac{1}{(i+K)^d} \prod_{j = i+1}^{k-1} \left( 1 - \frac{d_{a,c} \phi}{2(j+K)} \right) \\
    &\leq \alpha^{\frac{3}{2}} \sum_{i = 0}^{k-1} \frac{1}{(i+K)^d} \exp \left( -\frac{d_{a,c}\phi}{2} \sum_{j = i+1}^{k-1} \frac{1}{j + K} \right) \\
    &\leq \alpha^{\frac{3}{2}} \sum_{i = 0}^{k-1} \frac{1}{(i+K)^d} \exp \left( -\frac{d_{a,c}\phi}{2} \int_{j = i+1}^{k} \frac{1}{j + K} \right) \\
    &\leq \alpha^{\frac{3}{2}} \sum_{i = 0}^{k-1} \frac{1}{(i+K)^d} \left( \frac{i+1+K}{k+K} \right)^{\frac{d_{a,c}\phi}{2}}  \\
    &\leq \frac{2^d\alpha^{\frac{3}{2}}}{(k+K)^{\frac{d_{a,c}\phi}{2}}} \sum_{i = 0}^{k-1} \frac{1}{(i+1+K)^{d-\frac{d_{a,c}\phi}{2}}}.
\end{align*}
In summary, when $\xi = \frac{2d}{3}$, we have the following bounds that correspond to specific cases of $\phi$ as follows
\begin{itemize}
    \item When $\phi \in \br{0, \frac{2(d-1)}{d_{a,c}}}$ then $\sum_{i = 0}^{k-1} \frac{1}{(i+1+K)^{d-\frac{d_{a,c}\phi}{2}}} \leq \frac{1}{d-1-\frac{d_{a,c}\phi}{2}}$
    \item When $\phi = \frac{2(d-1)}{d_{a,c}}$ then $\sum_{i = 0}^{k-1} \frac{1}{(i+1+K)^{d-\frac{d_{a,c}\phi}{2}}} \leq \log(k+K)$
    \item When $\phi \in \br{\frac{2(d-1)}{d_{a,c}}, \frac{2d}{d_{a,c}}}$ then $\sum_{i = 0}^{k-1} \frac{1}{(i+1+K)^{d-\frac{d_{a,c}\phi}{2}}} \leq \frac{(k+K)^{\phi-(d-1)}}{\frac{d_{a,c}\phi}{2}-(d-1)}$
    \item When $\phi = \frac{2d}{d_{a,c}}$ then $\sum_{i = 0}^{k-1} \frac{1}{(i+1+K)^{d-\frac{d_{a,c}\phi}{2}}} = k$.
    \item When $\phi > \frac{2d}{d_{a,c}}$ then $\sum_{i = 0}^{k-1} \frac{1}{(i+1+K)^{d-\frac{d_{a,c}\phi}{2}}} \leq \frac{e\br{k+K}^{\frac{d_{a,c}\phi}{2}-(d-1)}}{\frac{d_{a,c}\phi}{2}-(d-1)}$
\end{itemize}
If $\xi \in \br{0, \frac{2d}{3}}$ (that is $\xi' = \frac{(3d_{a,c}-1)\xi}{2d_{a,c}} = \frac{3\xi}{2d} \in (0,1)$) then by induction we have $T_2 \leq \frac{4 \alpha^{\frac{3}{2}} \omega}{\phi \br{k+K}^{\frac{\xi}{2d_{a,c}}}}$, which gives
\begin{align*}
    E_k \leq M_{\mu_0}(x_0) \exp{\left[ -\frac{\phi((k+K)^{1-\frac{3\xi}{2d}}-K^{1-\frac{3\xi}{2d}}))}{1-\frac{3\xi}{2d}} \right]} + \frac{4 \alpha^{\frac{3}{2}} \omega}{\phi \br{k+K}^{\frac{\xi}{2d_{a,c}}}}.
\end{align*}
Plugging back $\Phi = 0.5 d_{a,c} \phi, \omega_* = d_{a,c} \omega$ and proceed similarly to the proof of Theorem \ref{finite-time-corollary-2}, we have the finite time bounds:
\begin{align*}
    E_{k} \leq 
    \begin{cases}
        M_{\mu_0}(x_0)\left(\frac{K}{k+K}\right)^{\Phi} + \frac{2^d\alpha^{\frac{3}{2}}}{(k+K)^{\Phi}}\frac{\omega_*}{d-1-\Phi} &\text{ if } \xi =  \frac{2d}{3}, \Phi \in \br{0, d-1}\\
        M_{\mu_0}(x_0)\left(\frac{K}{k+K}\right)^{d-1} + \frac{2^d\alpha^{\frac{3}{2}} \omega_* \log(k+K)}{(k+K)^{d-1}} &\text{ if } \xi = \frac{2d}{3}, \Phi = d-1 \\
        M_{\mu_0}(x_0)\left(\frac{K}{k+K}\right)^{\Phi} + \frac{2^d\alpha^{\frac{3}{2}}}{(k+K)^{d-1}}\frac{e \omega_*}{\Phi-(d-1)} &\text{ if } \xi = \frac{2d}{3}, \Phi \in \br{d-1, \infty} \\
        M_{\mu_0}(x_0) \exp{\left[ -\frac{\phi((k+K)^{1-\frac{(3d_{a,c}-1)\xi}{2d_{a,c}}}-K^{1-\frac{(3d_{a,c}-1)\xi}{2c}}))}{1-\frac{(3d_{a,c}-1)\xi}{2d_{a,c}}} \right]} + \frac{2 \alpha^{\frac{3}{2}} \omega_*}{\Phi \br{k+K}^{\frac{\xi}{2d_{a,c}}}} &\text{ if } \xi \in \left(0, \frac{2d}{3}\right) \\
        M_{\mu_0}(x_0)(1-\Phi)^k + \frac{\omega_*}{\Phi} &\text{ if } \xi = 0, \Phi \in \br{0, 1}
    \end{cases}
\end{align*}
% \[
%     E_{k+1} \leq \begin{cases}
%         M_{\mu_0}(x_0)\left(\frac{K}{k+K+1}\right)^{\phi} + \frac{2^{\frac{3c}{3c-1}}\alpha^{1.5}}{(k+1+K)^{\phi}} \frac{\alpha^2 (cA^* + 2B^*\norm{x^*}^2)}{\mu \left(\frac{1}{3c-1}-\phi\right)} &\text{ if } \xi =  \frac{2c}{3c-1}, \phi \in \left(0,\frac{1}{3c-1}\right)\\
%         M_{\mu_0}(x_0)\left(\frac{K}{k+K+1}\right)^{\frac{1}{3c-1}} + \frac{2^{\frac{3c}{3c-1}} \alpha^{1.5} \log(k+K+1)}{(k+K+1)^{\frac{1}{3c-1}}} \frac{\alpha^2 (cA^* + 2B^*\norm{x^*}^2)}{\mu} &\text{ if } \xi = \frac{2c}{3c-1}, \phi = \frac{1}{3c-1} \\
%         M_{\mu_0}(x_0)\left(\frac{K}{k+K+1}\right)^{\phi} + \frac{e2^{\frac{3c}{3c-1}}\alpha^{1.5}}{(k+1+K)^{\frac{1}{3c-1}}} \frac{\alpha^2 (cA^* + 2B^*\norm{x^*}^2)}{\mu (\phi-\br{\frac{1}{3c-1}})} &\text{ if } \xi = \frac{2c}{3c-1}, \phi > \frac{1}{3c-1}
%         \\
%         M_{\mu_0}(x_0) \exp{\lbrace -\frac{\phi((k+K+1)^{1-\xi}-K^{1-\xi}))}{1-\xi} \rbrace} + \frac{2\alpha^{1.5}}{\phi (k+K)^{\frac{\xi}{3c-1}}} &\text{ if } \xi \in \left(0, \frac{2c}{3c-1}\right) \\
%         M_{\mu_0}(x_0)(1-\phi)^k + \frac{\alpha^2 \alpha^{1.5}(cA^*+2B^*\norm{x^*}^2)}{\mu \phi} &\text{ if } \xi = 0
%     \end{cases}
% \]
Lastly, from Lemma \ref{lemma: envelope-properties}, we have that $M_{\mu_0}(x_0) \leq C_{2,a}^{\frac{2}{a}}\norm{x_0-x^*}^2$ and $M_{\mu_k}(x_k) \geq \frac{\norm{x_k-x^*}^2}{\frac{1}{C_{1,a}^{\frac{2}{a}}} + 2\mu_k} \geq \frac{\norm{x_k-x^*}^2}{\frac{1}{C_{1,a}^{\frac{2}{a}}} + 2\mu_0}$. Furthermore, we have that $0 < \mu_k \leq \mu_0$ by our choice of $\mu_k$. Hence, we have:
\begin{align*}
    %M_{\mu_0}(x_0) \leq C_{2,a}^{\frac{2}{a}}\norm{x_0-x^*}^2
    &\textbf{For $\xi = \frac{2d}{3}$}: \\
    &\E\sqbr{\norm{x_k-x^*}^2} \leq C_{2,a}^{\frac{2}{a}}\br{\frac{1}{C_{1,a}^{\frac{2}{a}}} + 2\mu_0}\norm{x_0-x^*}^2\left(\frac{K}{k+K}\right)^{\Phi} \\
    &\qquad + 
    \br{\frac{1}{C_{1,a}^{\frac{2}{a}}} + 2\mu_0} \times
    \begin{cases}
        \frac{1}{(k+K)^{\Phi}}\frac{2^d\alpha^{\frac{3}{2}}\omega_*}{d-1-\Phi} &\text{ if } \xi =  \frac{2d}{3}, \Phi \in (0,d-1)\\
        \frac{ \log(k+K)}{(k+K)^{d-1}} \times 2^d\alpha^{\frac{3}{2}} \omega_* &\text{ if } \xi = \frac{2d}{3}, \Phi = d-1 \\
        \frac{1}{(k+K)^{d-1}}\frac{e2^d\alpha^{\frac{3}{2}} \omega_*}{\Phi-(d-1)} &\text{ if } \xi = \frac{2d}{3}, \Phi > d-1
    \end{cases}\\
    &\textbf{For $\xi \in \left(0, \frac{2d}{3}\right)$}: \\
    &\E\sqbr{\norm{x_k-x^*}^2} \leq C_{2,a}^{\frac{2}{a}}\br{\frac{1}{C_{1,a}^{\frac{2}{a}}} + 2\mu_0}\norm{x_0-x^*}^2 \exp{\left[ -\frac{\Phi\br{(k+K)^{1-\frac{3\xi}{2d}}-K^{1-\frac{3\xi}{2d}})}}{1-\frac{3\xi}{2d}} \right]} \qquad\qquad\qquad\qquad \\
    &+ \frac{2\alpha^{1.5} \omega_* \br{\frac{1}{C_{1,a}^{\frac{2}{a}}} + 2\mu_0}}{\Phi (k+K)^{\frac{\xi}{2d_{a,c}}}} \qquad\\
    &\textbf{For $\xi = 0$}: \\
    &\E\sqbr{\norm{x_k-x^*}^2} \leq C_{2,a}^{\frac{2}{a}}\br{\frac{1}{C_{1,a}^{\frac{2}{a}}} + 2\mu_0}\norm{x_0-x^*}^2(1-\Phi)^k + 
    \frac{\omega_*\br{\frac{1}{C_{1,a}^{\frac{2}{a}}} + 2\mu_0}}{\Phi}
\end{align*}
where $\Phi = \frac{d_{a,c} \alpha^{2-\frac{1}{d_{a,c}}} \nu_4^{\frac{1}{d_{a,c}}} \left(\frac{\br{\mu_0^2 G_M^2 + 1} N_C}{2\mu}\right)^{1-\frac{1}{d_{a,c}}}}{2} = 0.5 d_{a,c} \phi, \omega_* = \frac{d_{a,c} \alpha^2 \br{\mu_0^2 G_M^2 + 1} N_C}{2\mu} = d_{a,c} \omega$. This completes the proof.
\end{proof}

\subsection{Proof for almost surely convergence results}
\label{ssec: proof-almost-sure}
In this subsection, we present the proofs for the almost surely convergence results, which follow the finite-time bounds as corollaries. Besides having different step size conditions, the proof of these corollaries are mostly similar. Here, we apply the Supermartingale Convergence Theorem from \cite{neurodynamic} on the one-iterate bound in each setting, and the argument follows quickly from \cite{zaiwei-envelope}. We present the proofs for all settings for completeness.

% \textcolor{red}{Say that we apply SCT to the one-iterate bound and then the argument follows from \cite{zaiwei-envelope}}.

\subsubsection{Proof of \ref{almost-sure-convergence-1}}
\label{sssec:almost-sure-convergence-1-proof}
Recall that from Theorem \ref{prop: one-iterate-bound-exponential}, we have:
    \begin{align*}
        \mathbb{E}[M_\mu(x_{k+1}) | \mathcal{F}_k] \leq \left( 1-\frac{\alpha_k \gamma_M}{2} \right)M_\mu(x_k) + \frac{\alpha_k^2(A+2B \norm{x^*}^2)}{\mu}
    \end{align*}
Using the Supermartingale Convergence Theorem \cite{neurodynamic} and fix the sample path, we have that there exists a non-negative random variable $M_\infty$ such that $\lim_{k \rightarrow \infty} M_\mu(x_k) = M_\infty$ almost surely and $\sum_{k = 1}^{\infty} \alpha_k \gamma_M M_\mu(x_k) < +\infty$ almost surely. Thus, what is left to us is to prove that $M_\infty = 0$ almost surely. Suppose the contrary that $M_\infty > 0$, then there exists $\delta > 0$ and $k_0 \in \mathbb{Z}^+$ such that $M_\mu(x_k) \geq \delta/2 \forall k \geq k_0$. 
% Due to convergence, we also have $M_\mu(x_k)$ is also bounded above by some constant $\upsilon$ for $k \geq k_0$. Consider $S = \{x | \delta \leq M_\mu(x) \leq \upsilon\}$, we have that $S$ is a compact set since $M_\mu$ is smooth. By continuity of $M_\mu$ and the Weierstrass Theorem, there exists $\delta' > 0$ such that $M_\mu(x) \geq \delta' \forall x \in S$. 
However, since we have:
\begin{align*}
    \sum_{k = 1}^{\infty} \alpha_k \gamma_M M_{\mu}(x_k) \geq \delta' \gamma_M \sum_{k = 1}^{\infty} \alpha_k = \infty
\end{align*}
which is a contradiction to the fact that $\sum_{k = 1}^{\infty} \alpha_k \gamma_M M_\mu(x_k) < +\infty$ holds almost surely. Hence, we must have that $M_\mu(x_k)$ converges to $0$ almost surely. From Lemma \ref{lemma: envelope-properties} and \ref{assumption: polynomial-growth}, we must have that $x_k$ converges to $x^*$ almost surely.

\subsubsection{Proof of \ref{almost-sure-convergence-2}}
\label{sssec:almost-sure-convergence-2-proof}
Similarly, we obtain the one iterate bounds from Theorem \ref{main-theorem-2} as follows:
\begin{align*}
    V(x_{k+1}) \leq V(x_k)-\alpha_k^{2-\frac{1}{c}}A^{1-\frac{1}{c}}\gamma^{\frac{1}{c}} V(x_k) + \alpha_k^2L(2B\norm{x^*}^2 + cA)
\end{align*}
From the assumptions that $\sum_{k=1}^{\infty}\alpha_k^{2-\frac{1}{c}} = \infty, \alpha_k^2 < +\infty$, we have that 
$L(2B\norm{x^*}^2 + cA) \sum_{i=1}^{\infty} \alpha_k^2 < \infty$. Thus, by the Supermatingale Convergence Theorem, we have that $V(x_k)$ converges to some non-negative random variable $V_\infty$ almost surely. Using arguments similar to the proof of \ref{almost-sure-convergence-1} and from the assumption $\sum_{k=1}^{\infty}\alpha_k^{2-\frac{1}{c}} = \infty$, we have that $V_\infty = 0$. From Lemma \ref{assumption: quadratic-growth}, we must have that $x_k$ converges to $x^*$ almost surely. Hence proved.

\subsubsection{Proof of \ref{almost-sure-convergence-3}}
\label{sssec:almost-sure-convergence-3-proof}
%From the choice of $\alpha_k, \mu_k$ and $\xi$, we have that 
From Proposition \ref{prop: one-iterate-bound-adaptive-mu} and let $d_{a,c} = \frac{a(c-1)}{2}+1$, we have the following bound:
\begin{align*}
    \mathbb{E}[M_{\mu_{k+1}}(x_{k+1}) | \mathcal{F}_k] &\leq \left(1 + \alpha_k \mu_k \nu_1 + (\mu_k-\mu_{k+1})\nu_2 + \frac{\alpha_k^2\nu_3}{\mu_k} \right)M_{\mu_k}(x_k) \\
        &- \alpha_k \nu_4 M_{\mu_k}(x_k)^{\frac{a(c-1)}{2}+1} + \frac{\alpha_k^2}{\mu_k} \frac{\br{\mu_0^2 G_M^2 + 1} N_C}{2}.
\end{align*}
By selecting $\{\alpha_k\}, \{\mu_k\}$ such that $\alpha_k = \mu_k^2$ and $\mu_k = \frac{\mu}{(k+K)^{0.5\xi}}$ where $\xi \in \left(\frac{2}{3},\frac{2d}{3}\right]$, this implies $\sum_{k=1}^{\infty}\alpha_k^{\frac{3d_{a,c}-1}{2d_{a,c}}} = \infty, \sum_{k=1}^{\infty} \alpha_k^{\frac{3}{2}} < +\infty$. Using the same analysis as the proof of Corollary \ref{finite-time-corollary-3}, the one iterate bound now becomes:
\begin{align*}
    \mathbb{E}[M_{\mu_{k+1}}(x_{k+1}) | \mathcal{F}_k] \leq 
    % \left( 1 - \frac{c\alpha^{2-\frac{1}{c}} \nu_4^{\frac{1}{c}}}{2} \left(\frac{A^*}{\mu}\right)^{1-\frac{1}{c}} \left( \frac{1}{k+K} \right)^{\left(\frac{3c-1}{2c}\right)\xi} \right)M_{\mu_k}(x_k) + \frac{\alpha^2(cA^*+B^*\norm{x^*}^2)}{\mu(k+K)^{1.5\xi}}
    \left( 1 - \frac{d_{a,c} \alpha^{2-\frac{1}{d_{a,c}}} \nu_4^{\frac{1}{d_{a,c}}} \left(\frac{\br{\mu_0^2 G_M^2 + 1} N_C}{2\mu}\right)^{1-\frac{1}{d_{a,c}}}}{2\left(k-1+K \right)^{\left(\frac{3}{2}-\frac{1}{2d_{a,c}}\right)\xi}} \right)M_{\mu_{k-1}}(x_{k-1}) + \frac{d_{a,c} \alpha^2 \br{\mu_0^2 G_M^2 + 1} N_C}{2\mu (k-1+K)^{1.5\xi}}.
\end{align*}
for $0 \leq \xi \leq \frac{2d}{3}$. From the Supermartingale Convergence Theorem and from:
\begin{align*}
    \sum_{k=1}^{\infty}\alpha_k^{\frac{3d_{a,c}-1}{2d_{a,c}}} = \infty,\sum_{k=1}^{\infty} \alpha_k^{\frac{3}{2}} < +\infty,
\end{align*}
we have that the iterates $x_k$ also converge almost surely to $0$ by similar arguments as above. Hence proved.

\subsection{Proof of Central Limit Theorem \ref{thm: clt-exponential}}
\label{ssec: clt-proof}
\textcolor{black}{
\begin{proof}
    To prove this result, we shall apply Theorem 5 from \cite{kontoyiannisborkar2024odemethodasymptoticstatistics} and we shall proceed by verifying the assumptions of the theorem. Note that Assumption \textbf{(DV3)} holds from Assumption \ref{assumption: dv3} and Assumption \textbf{(A1)} in \cite{kontoyiannisborkar2024odemethodasymptoticstatistics} holds with step size $\alpha_k = \frac{\alpha}{(k+K)^\xi}$ for $\xi \in (1/2,1]$, Assumption \textbf{(A2)} in \cite{kontoyiannisborkar2024odemethodasymptoticstatistics} is a generalization of Assumption \ref{assumption: F-lipschitz} where $L(x) = \max\{C, \norm{F(0)}\} \, \forall x \in \R^d$. Next, Assumption \textbf{(A3)} in \cite{kontoyiannisborkar2024odemethodasymptoticstatistics} is satisfied from Assumption \ref{assumption: ode-vector-field}. Note that from the chain rule, Assumption \ref{assumption: clarke-time-derivative} and Lemma \ref{lemma: rescaled-time-derivative-condition} gives us
    \begin{align}
        \label{eqn: scaled-drift}
        \langle g_x, r^{-1} F(r x) \rangle = \langle g_{rx}, F(r x) \rangle \leq -\frac{2\gamma}{a} R(r x)
    \end{align}   
    for $g_x \in \partial R(x)$ and $r > 0$. This implies that the ODE $\dot{x} = F_\infty(x)$ is globally asymptotically stable, and so Assumption \textbf{(A3)} is satisfied. For Assumption \textbf{(A4b)}, note that we have $\lim_{r \rightarrow \infty} \sup_{x \in \R^d} \frac{L(x)}{\max\{r, Q(x)\}} = \lim_{r \rightarrow \infty} \sup_{x \in \R^d} \frac{\max\{C, \norm{F(0)}\}}{\max\{r, Q(x)\}} = 0$. Moreover, \eqref{eqn: scaled-drift} implies that the ODE $\dot{x} = F_\infty(x)$ admits exponential stability with rate $\frac{2\gamma}{a}$, which gives $C_{1,a}^{\frac{2}{a}} \norm{x(t)-x^*}^2 \leq R(x(t)) \leq e^{-\frac{2\gamma t}{a}} R(x(0)) \leq C_{2,a}^{\frac{2}{a}} e^{-\frac{2\gamma t}{a}} \norm{x(0)-x^*}^2$. This implies the relaxation time $T_r \leq \frac{a\log 2\br{\frac{C_{2,a}}{C_{1,a}}}^{a^{-1}}}{\gamma} \leq \frac{1}{4\max\{C, \norm{F(0)}\}}$ from our assumption and so Assumption \textbf{(A4a)} is satisfied. Finally, we have \textbf{(A5b)} in \cite{kontoyiannisborkar2024odemethodasymptoticstatistics} is satisfied since we have chosen $\xi = 1$. Since all assumptions of Theorem 5 from \cite{kontoyiannisborkar2024odemethodasymptoticstatistics} are satisfied, the desired CLT result follows.
\end{proof}}

\subsection{Remark on the projection step}
\label{ssec:projection-justification}
In this subsection, we provide an analysis of the projection step in order to guarantee that Assumption \ref{assumption: bounded-iterates} holds for all iterates. For this task, the idea is to show that the one-step contraction bound on the Lyapunov function $V$ still holds under the projection step onto the ball $K_1 r$. To do this, we choose $r$ sufficiently large such that $K_* r \geq \norm{x^*}$ and $\alpha_k \leq \alpha_0 \leq \min\left\lbrace \frac{K_\gamma}{\gamma}, \frac{K_\alpha r}{\br{7Cr/6 + \sqrt{A}+D\sqrt{B}}}
 \right\rbrace$ where $K_1, K_*, K_\alpha, K_\gamma$ are appropriately chosen constants. If $\norm{x_k+\alpha_k (F(x_k)+w_k)} \leq r$ then we obtain the one-step bound
\begin{align*}
    V(x_{k+1}) \leq \br{1-\frac{\alpha_k^{c'} \gamma}{2}} V(x_k) + O(\alpha_k^2).
\end{align*}

Let $y_{k+1} = x_k+\alpha_k (F(x_k)+w_k)$, assuming that $\norm{y_{k+1}} > r$, we project this onto the ball radius $K_1 r$. Let this projected point be $x_{k+1}$, we have
\begin{align*}
    \norm{x_{k+1}-x^*} \leq \norm{x_{k+1}} + \norm{x^*} \leq \frac{r (K_1 + K_*)\br{1-\frac{\alpha_k \gamma}{2}}}{\br{1 - \frac{K_\gamma}{2}}}
\end{align*}
since we have chosen $\alpha_k \leq \frac{K_\gamma}{\gamma}$. From Assumption \ref{assumption: polynomial-growth}, this gives
\begin{align}
    \label{eqn: v-bound-projection-1}
    V(x_{k+1}) \leq C_2^{\frac{2}{a}} \norm{x_{k+1}-x^*}^2 \leq C_2^{\frac{2}{a}} \frac{r^2 (K_1 + K_*)^2 \br{1-\frac{\alpha_k \gamma}{2}}^2}{\br{1 - \frac{K_\gamma}{2}}^2}.
\end{align}
On the other hand, we have:
\begin{align*}
    r &\leq \E\sqbr{\norm{x_k+\alpha_k (F(x_k)+w_k)} | \cF_k} \\
    &\leq \norm{x_k} + \alpha_k \br{\norm{F(x_k)} + \E\sqbr{\norm{w_k} | \cF_k}} \\
    &\leq \norm{x_k} + \alpha_k \br{C\norm{x_k-x^*} + \E\sqbr{\norm{w_k} | \cF_k}} \\
    &\leq \norm{x_k} + \alpha_k \br{C\norm{x_k}+C\norm{x^*} + \sqrt{A+B\norm{x_k}^2}} \\
    % &\leq \norm{x_k} + \alpha_k \br{CD+CD/6 + \sqrt{A+BD^2}} \\
    &\leq \norm{x_k} + \alpha_k \br{7Cr/6 + \sqrt{A}+r\sqrt{B}}
\end{align*}
From the choice of $\alpha_k$, we have $\alpha_k \leq \frac{K_\alpha r}{\br{7Cr/6 + \sqrt{A}+r\sqrt{B}}}$ which implies $\norm{x_k} \geq \br{1-K_\alpha} r$. And so, we have $\norm{x_k-x^*} \geq \norm{x_k}-\norm{x^*} \geq \br{1-K_\alpha-K_*} r$.
% \begin{align*}
%     $\norm{x_k-x^*} \geq \norm{x_k}-\norm{x^*} \geq \frac{r}{2}$.
% \end{align*}
From \ref{eqn: v-bound-projection-1} and Assumption \ref{assumption: polynomial-growth}, we have 
\begin{align}
    \label{eqn: one-step-v-bound-with-projection}
    V(x_{k+1}) \leq C_K \br{1-\frac{\alpha_k \gamma}{2}}^2 V(x_k) \leq \br{1-\frac{\alpha_k^{c'} \gamma}{2}} V(x_k).
\end{align}
where we chose $K_1, K_*, K_\alpha, K_\gamma$ such that $C_K =  \br{\frac{C_2}{C_1}}^{\frac{2}{a}} \br{\frac{K_1+K_*}{\br{1-\frac{K_\gamma}{2}} (1-K_\alpha-K_*)}}^2 < 1$. Combine these together, we have
\begin{align*}
    \E[V(x_{k+1})] &= \E\sqbr{V(x_k) | \norm{y_{k+1}-x^*} \leq r} + \E\sqbr{V(x_k) | \norm{y_{k+1}-x^*} > r} \\
    &\leq \sqbr{\br{1-\frac{\alpha_k^{c'} \gamma}{2}} \norm{x_k-x^*}^2 + O(\alpha_k^2)} \P\sqbr{\norm{y_{k+1}-x^*} \leq r} \\
    &+ \br{1-\frac{\alpha_k \gamma}{2}} \norm{x_k-x^*}^2 \P\sqbr{\norm{y_{k+1}-x^*} > r}\\
    &\leq \br{1-\frac{\alpha_k^{c'} \gamma}{2}} \norm{x_k-x^*}^2 + O(\alpha_k^2)
\end{align*}
for $c' \geq 1$. And so, this projection step ensures the one-iterate bound holds for all iterations $k$. For the non-smooth subexponential case, we can apply the analysis similarly.

Now, to choose $K_1, K_*, K_\alpha, K_r$ such that $C_K < 1$, we choose $K_1 = K_\alpha = K_* = \frac{1}{8}\br{\frac{C_1}{C_2}}^{\frac{1}{a}}, K_\gamma = 1$, which gives
\begin{align}
    C_K = \br{\frac{C_2}{C_1}}^{\frac{2}{a}} \br{\frac{\frac{1}{4} \br{\frac{C_1}{C_2}}^{\frac{1}{a}}}{\frac{1}{2} \br{1 - \frac{1}{4} \br{\frac{C_1}{C_2}}^{\frac{1}{a}}}}}^2 < \frac{4}{9} < 1
\end{align}
where $C_1 \leq C_2$ from Assumption \ref{assumption: polynomial-growth}.

\textbf{Discussion on the projection radius}: In our projection routine, we require the knowledge of $\norm{x^*}$, which is usually unknown to us and there is no obvious way to estimate this quantity theoretically. On the other hand, we can intuitively decide whether our projection region covers $x^*$ by observing whether we have to invoke the projection multiple times or observing the drift: If the iterate is close to the boundary but we still have a strong drift then it suggests that $x^*$ is not covered in the projection radius. Moreover, from our almost sure convergence results, it is extremely unlikely to invoke the projection routine multiple times if $x^*$ is contained in the projection radius. And so, one can iteratively enlarge the projection radius and projection threshold $r$ if the projection routine is invoked multiple times.
\subsection{Verification of the negative drift condition}
\label{ssec: negative-drift-verification}
In this section, we shall prove the negative drift condition in the non-smooth sub-exponential convergence setting in Subsection \ref{ssec: nonsmooth-subexponential-experimental-setting}.

Let $x = (x_1,x_2) \in \R^2$. Consider the first case where $x_1 \neq 0$, we have $V(x_1,x_2) = \sqrt{4x_1^2+3x_2^2} - |x_1|$ and 
\begin{align*}
    \nabla V(x_1,x_2) = \sqbr{\frac{4x_1}{\sqrt{4x_1^2+3x_2^2}} - \sign(x_1), \frac{3x_2}{\sqrt{4x_1^2+3x_2^2}}}.
\end{align*}
From here, we can verify the negative drift condition as follows:
\begin{align*}
    \dot{V}(x_1,x_2) &= \langle \nabla V(x_1,x_2), \dot{x} \rangle \\
    &= \left\langle \sqbr{\frac{4x_1}{\sqrt{4x_1^2+3x_2^2}} - \sign(x_1), \frac{3x_2}{\sqrt{4x_1^2+3x_2^2}}}, \sqbr{(x_1^2-x_2^2)u(x_1,x_2), 2x_1 x_2 u(x_1,x_2)} \right\rangle \\
    &= \frac{4x_1(x_1^2-x_2^2)u(x_1,x_2) + 6x_1x_2^2u(x_1,x_2)}{\sqrt{4x_1^2+3x_2^2}} - (x_1^2-x_2^2)u(x_1,x_2) \sign(x_1) \\
    &= -\frac{2|x_1|(2x_1^2+x_2^2)}{\sqrt{4x_1^2+3x_2^2}} + (x_1^2-x_2^2) \text{ since } u(x_1,x_2)\sign(x_1) = -1.
\end{align*}
We shall prove that $\dot{V}(x_1,x_2) \leq -\frac{1}{15} V(x_1,x_2)^2$ for all $(x_1,x_2)$ such that $x_1 \neq 0$. Indeed, we have
\begin{align*}
    \dot{V}(x_1,x_2) &\leq -\frac{1}{15} V(x_1,x_2)^2 \\
    \Leftrightarrow -\frac{2|x_1|(2x_1^2+x_2^2)}{\sqrt{4x_1^2+3x_2^2}} + (x_1^2-x_2^2) &\leq - \frac{(\sqrt{4x_1^2+3x_2^2} - |x_1|)^2}{15} \\
    \Leftrightarrow -\frac{2|x_1|(2x_1^2+x_2^2)}{\sqrt{4x_1^2+3x_2^2}} + (x_1^2-x_2^2) &\leq - \frac{5x_1^2+3x_2^2 - 2|x_1|\sqrt{4x_1^2+3x_2^2}}{15} \\
    \Leftrightarrow 5x_1^2 &\leq \frac{|x_1|(17x_1^2+9x_2^2)}{\sqrt{4x_1^2+3x_2^2}} + 3x_2^2 \\
    \Leftrightarrow 9x_1^2 &\leq \frac{|x_1|(17x_1^2+9x_2^2)}{\sqrt{4x_1^2+3x_2^2}} + (4x_1^2+3x_2^2).
\end{align*}
The last inequality is true since
\begin{align*}
    \frac{|x_1|(17x_1^2+9x_2^2)}{\sqrt{4x_1^2+3x_2^2}} + (4x_1^2+3x_2^2) &= \frac{|x_1|(17x_1^2+9x_2^2)}{2\sqrt{4x_1^2+3x_2^2}} + \frac{|x_1|(17x_1^2+9x_2^2)}{2\sqrt{4x_1^2+3x_2^2}} + (4x_1^2+3x_2^2) \\
    &\geq 3 \sqrt[3]{\frac{|x_1|(17x_1^2+9x_2^2)}{2\sqrt{4x_1^2+3x_2^2}} \times \frac{|x_1|(17x_1^2+9x_2^2)}{2\sqrt{4x_1^2+3x_2^2}} \times (4x_1^2+3x_2^2)} \text{ from AM-GM}\\
    &= 3 \sqrt[3]{\frac{x_1^2 (17x_1^2+9x_2^2)^2}{4}} \\
    &\geq 3 \sqrt[3]{\frac{289 x_1^6}{4}} \\
    &\geq 9x_1^2.
\end{align*}
The last inequality is true because $3 \sqrt[3]{\frac{289}{4}} > 9$. Thus, we have Assumption \ref{assumption: clarke-asymptotic-stability} holds for $c = 2, \gamma = \frac{1}{15}$ for all $(x_1,x_2)$ such that $x_1 \neq 0$.

When $x_1 = 0$, the gradient of $V(x_1,x_2)$ does not exist but instead, we can compute the Clarke generalized gradient. In this case, we have the sub-exponential stability Assumption \ref{assumption: clarke-asymptotic-stability} holds as well. Thus, we have Assumption \ref{assumption: clarke-asymptotic-stability} holds for $c = 2, \gamma = \frac{1}{15}$ for all $(x_1,x_2)$.
\end{document}